\theoremstyle{plain}
\newtheorem{theorem}{Theorem}[section]
\newtheorem{lemma}[theorem]{Lemma}
\newtheorem{corollary}[theorem]{Corollary}
\newtheorem{proposition}[theorem]{Proposition}
\theoremstyle{definition}
\newtheorem{definition}[theorem]{Definition}
\newtheorem{example}[theorem]{Example}
\theoremstyle{remark}
\newtheorem{remark}[theorem]{Remark}
\DeclareMathOperator{\Mod}{Mod}
\DeclareMathOperator{\Vecc}{Vec}
\DeclareMathOperator{\Vecf}{Vecf}
\DeclareMathOperator{\Triv}{Triv}
\DeclareMathOperator{\Rep}{Rep}
\DeclareMathOperator{\Repf}{Repf}
\DeclareMathOperator{\Ind}{Ind}
\DeclareMathOperator{\ID}{ID}
\DeclareMathOperator{\Spec}{Spec}
\DeclareMathOperator{\Sym}{Sym}
\DeclareMathOperator{\Aut}{Aut}
\DeclareMathOperator{\Isom}{Isom}
\DeclareMathOperator{\Hom}{Hom}
\DeclareMathOperator{\End}{End}
\DeclareMathOperator{\GL}{GL}
\title{A general theory of Andr\'e's solution algebras}
\author{Levente Nagy and Tamás Szamuely}
\date{}
\begin{document}

\maketitle

\begin{abstract}
We extend Yves Andr\'e's theory of solution algebras in differential Galois theory to a general Tannakian context. As applications, we establish analogues of his correspondence between solution fields and observable subgroups of the Galois group for iterated differential equations in positive characteristic and for difference equations. The use of solution algebras in the difference algebraic context also allows a new approach to recent results of Philippon and Adamczewski--Faverjon in transcendence theory.
\end{abstract}

\section{Introduction}

In classical differential Galois theory one considers a linear differential equation over a field $K$ equipped with a nontrivial derivation $\partial$ such that $k:=\ker(\partial)$ is algebraically closed of characteristic 0. Following Kolchin, one constructs a differential field extension $L|K$, called the Picard--Vessiot extension, which is generated over $K$ by all solutions of the equation and their derivatives. The group $G$ of relative automorphisms of $L|K$ respecting the derivation has a natural structure of a linear algebraic group over the constant field  $k\subset K$, and there is a Galois correspondence between intermediate differential fields of $L|K$ and closed subgroups of $G$. For these classical results we refer to the book of van der Put and Singer \cite{MR1960772}.

In his recent paper \cite{MR3215927}, Yves Andr\'e introduced a refinement of the differential Galois correspondence by considering intermediate extensions generated by {\em some} but not necessarily all solutions of the differential equations. He called these subfields solution fields and showed that they correspond to observable subgroups of the differential Galois group, i.e. closed subgroups $H\subset G$ with quasi-affine quotient $G/H$. Using the more refined Tannakian approach to differential Galois theory, Andr\'e also showed that solution fields arise as fraction fields of so-called solution algebras which are generalizations of the classical Picard--Vessiot algebras, and established a correspondence between solution algebras and affine quasi-homogeneous varieties under the differential Galois group.

At the end of his paper (\cite{MR3215927}, Remark 6.5 (3)), Andr\'e writes that he expects a similar theory of solution algebras in characteristic $p>0$ using iterated derivations as well as a similar theory for difference equations. In this paper we confirm his expectations.

For the characteristic $p$ theory, we work with the iterative differential modules (or ID-modules) of Matzat and van der Put \cite{MR1978401} (see the beginning of Section 5 for a brief summary of the basic definitions). To an ID-module $\mathcal M$ defined over an ID-field $\mathcal K$ they associate a Picard--Vessiot  extension $\mathcal{J}{\mid}\mathcal{K}$ in the ID-setting and an associated Galois group scheme $G$. This group scheme satisfies an iterated differential Galois correspondence, extended to possibly inseparable Picard--Vessiot extensions by Maurischat \cite{MR2657686}. Mimicking Andr\'e's definition, we say that an extension of ID-fields $\mathcal{L}{\mid}\mathcal{K}$ is a {\em solution field} for $\mathcal{M}$ if the constant field of $\mathcal{L}$ is $k$ and there exists  a morphism of ID-modules $\mathcal{M} \to \mathcal{L}$ whose image generates the underlying field extension $L{\mid}K$. One of our main results is then the following generalization of a theorem of Andr\'e's:

\begin{theorem}[= Theorem \ref{idobs}]
An intermediate ID-extension $\mathcal{L}$ of $\mathcal{J}{\mid}\mathcal{K}$ is a solution field for $\mathcal{M}$ if and only if the corresponding subgroup scheme $H$ is an observable subgroup scheme of the Galois group scheme $G$.
\end{theorem}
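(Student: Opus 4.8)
The plan is to deduce the theorem from the general Tannakian theory of solution algebras developed above, by exhibiting the iterated differential situation as one of its instances. The first task is to check the required identifications: the category $\langle\mathcal{M}\rangle^{\otimes}$ generated by $\mathcal{M}$ inside the ID-modules over $\mathcal{K}$ is a neutral Tannakian category over $k$; the functor $\omega\colon\mathcal{N}\mapsto(\mathcal{N}\otimes_{\mathcal{K}}\mathcal{J})^{\mathrm{ID}}$ of horizontal sections in the Picard--Vessiot extension is a fibre functor; the group scheme $G=\underline{\Aut}^{\otimes}(\omega)$ is the ID-Galois group scheme of $\mathcal{M}$ of Matzat--van der Put; the Picard--Vessiot ring $\mathcal{R}\subset\mathcal{J}$ (with fraction field $\mathcal{J}$) plays the role of the torsor object, in that $\Spec\mathcal{R}$ is a $G$-torsor over $\Spec k$ and base change along $\mathcal{K}\to\mathcal{R}$ trivialises every object of $\langle\mathcal{M}\rangle^{\otimes}$; and, crucially, the Galois correspondence of Matzat--van der Put, in the form of Maurischat valid for possibly inseparable Picard--Vessiot extensions, matches the intermediate ID-fields $\mathcal{L}$ of $\mathcal{J}\mid\mathcal{K}$ with the closed subgroup schemes $H$ of $G$ exactly as in the abstract correspondence. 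Since the Picard--Vessiot extension of $\mathcal{M}$ also serves the whole category $\langle\mathcal{M}\rangle^{\otimes}$, so that $\omega$ identifies $\Rep_{k}(G)$ with $\langle\mathcal{M}\rangle^{\otimes}$, the theorem then follows by specialising the general solution-field theorem.

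Concretely, the two implications run as follows, with $V:=\omega(\mathcal{M})$ a tensor generator of $\Rep_{k}(G)$. If $\mathcal{L}$ is a solution field for $\mathcal{M}$, then $\mathcal{L}$ is generated over $\mathcal{K}$ by the image of an ID-morphism $\phi$ from an object $\mathcal{N}$ of $\langle\mathcal{M}\rangle^{\otimes}$ into $\mathcal{L}\subset\mathcal{J}$; base-changing to $\mathcal{R}$ and trivialising, $\phi$ becomes an $\mathcal{R}$-point $p_{\phi}$ of the affine space $\Spec\Sym\bigl(\omega(\mathcal{N})^{\vee}\bigr)$, and the subgroup scheme of $G$ cutting out the field generated by $\operatorname{im}\phi$ is, by the Galois correspondence, the scheme-theoretic stabiliser $\Stab_{G}(p_{\phi})$. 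The hypothesis that $\operatorname{im}\phi$ generate $\mathcal{L}$ thus forces $\Stab_{G}(p_{\phi})=H$, whence the $G$-orbit of $p_{\phi}$, being locally closed in an affine space, is quasi-affine; as this orbit coincides with $G/H$ up to an inseparable isogeny, $G/H$ is quasi-affine and $H$ is observable. Conversely, if $H$ is observable then (by the scheme-theoretic analogue of the theorem of Bialynicki-Birula--Hochschild--Mostow, part of the general theory) $H$ is the stabiliser of a vector $w$ in some $G$-module $W$, the orbit of $w$ recovering $G/H$; since $\omega$ identifies $\Rep_{k}(G)$ with $\langle\mathcal{M}\rangle^{\otimes}$, we have $W=\omega(\mathcal{W})$ for an ID-module $\mathcal{W}\in\langle\mathcal{M}\rangle^{\otimes}$, and under the torsor dictionary $w$ yields a solution of $\mathcal{W}$ in $\mathcal{J}$. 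As $w$ is $H$-invariant this solution lies in $\mathcal{J}^{H}=\mathcal{L}$, and as its stabiliser is exactly $H$ its entries generate $\mathcal{L}$ over $\mathcal{K}$; since $\mathcal{L}\subset\mathcal{J}$ automatically has constant field $k$, this exhibits $\mathcal{L}$ as a solution field for $\mathcal{M}$.

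The point that is \emph{not} a free transcription of Andr\'e's characteristic-zero argument, and hence the main obstacle, is the systematic passage to schemes forced by characteristic $p$: $G$ and the subgroup schemes $H$ may be non-reduced, the extension $\mathcal{J}\mid\mathcal{K}$ may be inseparable, the quotients $G/H$ must be formed as fppf quotient schemes, and orbit maps $G/H\to G\cdot w$ need not be isomorphisms but only finite, radicial and surjective. One must therefore work throughout with functorial stabilisers, orbits and quotients, take ``observable'' to mean ``$G/H$ quasi-affine'' as in the statement, rely on Maurischat's refinement of the Galois correspondence, and --- the genuinely delicate input --- dispose of the scheme-theoretic Bialynicki-Birula--Hochschild--Mostow characterisation of observable subgroup schemes used in the converse implication. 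Once these are in hand, verifying that the ID-setting meets the hypotheses of the abstract framework, and reading off the equivalence, are routine.
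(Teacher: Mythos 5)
Your proposal is correct and follows essentially the same route as the paper: both arguments combine Maurischat's iterative differential Galois correspondence with the characterization of observable subgroup schemes as scheme-theoretic stabilizers of vectors in finite-dimensional $G$-representations (Proposition \ref{theorem-observable-subgroup-equi}), and then use the Tannakian dictionary of Remark \ref{rewrite} to convert such vectors into ID-morphisms $\mathcal{N}_{\mathcal{K}}\to\mathcal{J}$ and back. (One small remark: since $H$ is taken to be the scheme-theoretic stabilizer, the map $G/H\to G\cdot v$ is already an immersion, so your hedge about an inseparable isogeny is unnecessary, though harmless.)
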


Here a closed subgroup scheme $H\subset G$ is called observable if the quotient $G/H$ is a quasi-affine scheme. Note that, in contrast to Andr\'e's setting, we allow our group schemes to be non-reduced. In fact, in Section 6 we shall exhibit an example of a solution field corresponding to a non-reduced closed subgroup scheme of a reduced iterative differential Galois group which is moreover not a Picard--Vessiot extension.

The proof of the above result has two main inputs: the iterative differential Galois correspondence quoted above and a generalization of Andr\'e's theory of solution algebras. We show that it is possible to develop the latter in a general Tannakian setting without reference to differential algebra. First, in Sections 2 and 3 we describe a general theory of Picard--Vessiot objects in $k$-linear tensor categories satisfying certain natural conditions, including the existence of a non-neutral fibre functor playing the role of the forgetful functor for differential modules. In Theorem \ref{pvfibfun} we show that Picard--Vessiot objects correspond to neutral fibre functors on rigid abelian tensor subcategories $\langle X\rangle_\otimes$ generated by a single object $X$. This generalizes the correspondence of Deligne and Bertrand in the last section of \cite{MR1106898}.\footnote{Note that such a theory was also developed in \cite{MR3633711} (in fact, independently, of our work). Although there are some unavoidable similarities, we feel that our approach is simpler.} Afterwards, we give a general definition of solution algebras associated with objects $X$ in the tensor categorical setting (Definition \ref{defi-solution-algebra}), and prove:

\begin{theorem} Let $\langle X\rangle_\otimes$ be a full rigid abelian tensor subcategory in a $k$-linear tensor category as above, equipped with a neutral fibre functor $\omega$. Given a solution algebra $\mathcal S$ associated with $X$, its image $\omega(\mathcal S)$ is a finitely generated $k$-algebra whose spectrum carries an action of the Tannakian fundamental group $G$ of $\omega$. It is moreover a quasi-homogeneous $G$-scheme, i.e. it has a schematically dense $G$-orbit.

The assignment $\mathcal{S}\mapsto\Spec (\omega(\mathcal S))$ induces an anti-equivalence between the category of solution algebras and that of affine quasi-homogeneous $G$-schemes of finite type over $k$.
\end{theorem}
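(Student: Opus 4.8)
The plan is to run the Tannakian dictionary, set up in Sections 2 and 3, between the category of ind-objects of $\langle X\rangle_\otimes$ and the category of $\mathcal{O}(G)$-comodules, and combine it with elementary facts on affine algebraic groups acting on affine schemes. Two inputs will be used throughout: that the Picard--Vessiot object $\mathcal{P}$ of $\langle X\rangle_\otimes$ attached to $\omega$ in Theorem \ref{pvfibfun} satisfies $\omega(\mathcal{P})\cong\mathcal{O}(G)$ $G$-equivariantly, where $G$ acts by translations; and that $\omega$ induces an equivalence between commutative algebra ind-objects of $\langle X\rangle_\otimes$ and commutative $\mathcal{O}(G)$-comodule algebras. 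The first assertions of the theorem are then immediate. As $\omega$ is an exact faithful tensor functor, $\omega(\mathcal{S})$ is a commutative $k$-algebra; by Definition \ref{defi-solution-algebra} the algebra object $\mathcal{S}$ is generated by a single object $V$ of $\langle X\rangle_\otimes$, so $\omega(\mathcal{S})$ is generated as a $k$-algebra by the finite-dimensional space $\omega(V)$ and is of finite type; and the functorial, monoidal $G$-action on $\omega(-)$ makes $G$ act on $\omega(\mathcal{S})$ by algebra automorphisms, hence on $Y:=\Spec(\omega(\mathcal{S}))$.

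To prove quasi-homogeneity, I would use that $\mathcal{S}$ is an ind-subobject of $\mathcal{P}$. Exactness of $\omega$ then gives an injective $G$-equivariant homomorphism of $k$-algebras $\omega(\mathcal{S})\hookrightarrow\omega(\mathcal{P})\cong\mathcal{O}(G)$, so the corresponding morphism $\pi\colon G\to Y$ is schematically dominant. A short computation with the coaction on $\omega(\mathcal{S})$ — intertwined with the comultiplication of $\mathcal{O}(G)$ by the above inclusion, and using the counit identity — identifies $\pi$ with the orbit morphism $g\mapsto g\cdot y_0$ of the point $y_0:=\pi(e)$. Since $\pi$ is schematically dominant and equals this orbit morphism, the orbit $G\cdot y_0$ is schematically dense in $Y$, which is therefore an affine quasi-homogeneous $G$-scheme of finite type over $k$.

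For the anti-equivalence, the functor sends a morphism of solution algebras $\mathcal{S}\to\mathcal{S}'$ — a morphism of the underlying algebra ind-objects — to $\Spec$ of the induced $G$-equivariant homomorphism $\omega(\mathcal{S})\to\omega(\mathcal{S}')$; conversely every $G$-equivariant $k$-algebra homomorphism $\omega(\mathcal{S})\to\omega(\mathcal{S}')$ arises from a unique such morphism by Tannakian duality, so the functor is fully faithful. For essential surjectivity, let $Y$ be affine, quasi-homogeneous and of finite type, and let $y_0$ be a point of the dense orbit; since $G$ is algebraic (because $\langle X\rangle_\otimes$ is generated by a single object) the orbit is open and isomorphic to $G/\Stab(y_0)$. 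Schematic density of the orbit makes the restriction $\mathcal{O}(Y)\to\mathcal{O}(G\cdot y_0)=\mathcal{O}(G)^{\Stab(y_0)}\hookrightarrow\mathcal{O}(G)$ an injective $G$-equivariant homomorphism with finitely generated $G$-stable image. By Tannakian duality this image equals $\omega(\mathcal{S})$ for a unique ind-subobject $\mathcal{S}\subseteq\mathcal{P}$; exactness, faithfulness and monoidality of $\omega$ force $\mathcal{S}$ to be an algebra subobject of $\mathcal{P}$, and the $\omega$-preimage of a finite-dimensional $G$-submodule of $\omega(\mathcal{S})$ containing a set of algebra generators is a generating object for it, so $\mathcal{S}$ is a solution algebra with $\Spec(\omega(\mathcal{S}))\cong Y$ over $G$.

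The main obstacle is the geometric half of the argument: identifying $\pi$ with an orbit morphism, and, in the reverse direction, extracting the $G$-equivariant embedding $\mathcal{O}(Y)\hookrightarrow\mathcal{O}(G)$ from schematic density of the dense orbit, all while keeping careful track of scheme-theoretic images and orbits in the possibly non-reduced setting we allow — which is where one must invoke that orbits of points under the affine algebraic group $G$ are locally closed subschemes isomorphic to quotients $G/\Stab$. The remaining, Tannakian, ingredients — extending $\omega$ to ind-objects and matching algebra structures and $G$-actions — are routine once the formalism of Sections 2 and 3 is available.
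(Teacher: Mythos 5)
Your proposal is correct and follows essentially the same route as the paper: full faithfulness comes from composing the Tannakian equivalence $\Ind\langle X\rangle_\otimes\simeq\Rep_k(G)$ (under which $\omega(\mathcal P)\cong\mathcal O(G)$) with $\Spec$, and essential surjectivity is obtained by pulling the injection $\mathcal O(Z)\hookrightarrow\mathcal O(G)$ back through $\omega$ and using a finite-dimensional $G$-stable subspace containing algebra generators to produce the generating object of Definition \ref{defi-solution-algebra}. Your detour through $\mathcal O(G)^{\Stab(y_0)}$ is unnecessary --- the schematically dominant orbit morphism $G\to Z$ already gives the embedding directly --- but it is not a different method.
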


This will be proven in a somewhat stronger form in Theorem \ref{proposition-solution-algebra-equiavalence}, thereby giving an abstract form of another theorem of Andr\'e \cite{MR3215927}.

Our abstract formulation applies in other situations as well. For instance, in the last section we briefly sketch how to apply it to difference modules to obtain a refinement of the Galois correspondence for difference equations. This context is particularly interesting because of applications in transcendence theory: as Yves Andr\'e pointed out to us, solution algebras for difference modules can be used to reprove results of Adamczewski--Faverjon \cite{adamczewski} and Philippon \cite{philippon} on the specialization of algebraic relations among Mahler functions (see Corollary \ref{mahlercor}).

The main results of this paper come from the 2018 Central European University doctoral thesis \cite{thesis} of the first author. We thank Yves Andr\'e for his enlightening comments on a preliminary version, and in particular for suggesting Remark \ref{pvremas} (2) and Corollary \ref{mahlercor}. We are also grateful to the referee for several pertinent suggestions.

\section{Solvable objects in tensor categories}

In this section and the next one we develop an abstract version of the Tannakian approach to the theory of Picard--Vessiot extensions that will be applied in the concrete situation of iterative differential modules and difference modules in subsequent sections.

\par We begin by some generalities concerning tensor categories; see e.g. \cite{delignemilne} as a basic reference. In this text by a \emph{tensor category} $\mathcal{C}$ we shall mean a $k$-linear abelian symmetric monoidal category over a field $k$. We assume that $\mathcal{C}$ admits small colimits that commute with the tensor product in both variables. Tensor functors will be assumed to be $k$-linear and preserving small colimits.

\par The unit object $1$ is a ring object in the tensor category $\mathcal C$. {\em We shall assume throughout that this ring object is  simple}, i.e. every nontrivial morphism from $1$ to a module object over it is a monomorphism. (Note that in ${\rm Mod}(R)$ the unit object $R$ is simple if and only if $R$ is a simple ring.) In this case a Schur Lemma type argument yields that the endomorphism ring $\End_{\mathcal C}(1)$ is a field $k$; we shall assume that $\mathcal C$ is $k$-linear with respect to $k=\End_{\mathcal C}(1)$.

\par We shall also consider more general ring objects $\mathcal A$ in $\mathcal C$ (always assumed to be commutative) and module objects over them, forming a subcategory $\Mod_{\mathcal C}(\mathcal{A})$ in $\mathcal C$. Morphisms between two objects $X$ and $Y$ of $\Mod_{\mathcal C}(\mathcal{A})$ will be denoted by $\Hom_{\mathcal{A}}(X,Y)$; for $X=Y$ we use the notation $\End_{\mathcal{A}}(X)$. For generalities on ring objects, see e.g. \cite{MR1012168}, \S 5.

We first give an abstract version of the notion of trivial differential modules. This is enabled by the following proposition.

\begin{proposition}
Let $\mathcal{C}$ be a tensor category over the field $k$. Up to unique isomorphism there is a unique tensor functor
\[
\tau \colon \Mod(k) \to \mathcal{C}.
\]
\end{proposition}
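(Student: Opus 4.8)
The plan is to use that $k$ is a field, so that every object of $\Mod(k)=\Vecc_k$ is free, and that any tensor functor $\tau$ is $k$-linear and colimit-preserving by our standing conventions. These force the construction: $\tau$ must send the unit $k$ to the unit object $1$ of $\mathcal C$, and hence a free module $k^{(I)}=\mathrm{colim}_{F\subseteq I\text{ finite}}k^{F}$ to $1^{(I)}:=\mathrm{colim}_{F}1^{\oplus F}$, the coproduct of $I$ copies of $1$ (which exists since $\mathcal C$ has small colimits). So the first step is to check that this forced recipe genuinely defines a tensor functor. Concretely I would fix a skeleton of $\Mod(k)$ whose objects are the $k^{(I)}$, set $\tau(k^{(I)}):=1^{(I)}$, and define $\tau$ on a morphism $k^{(I)}\to k^{(J)}$, which is a column-finite matrix $(a_{ji})$ with entries in $k=\End_{\mathcal C}(1)$, as follows: since $\mathcal C$ is abelian, finite coproducts of $1$ are biproducts, and column-finiteness means that the composite of the $i$-th coprojection $1\to 1^{(I)}$ with the desired map factors through a finite biproduct $1^{\oplus F_i}$, $F_i\subseteq J$, on which one lets the matrix act in the usual way; the universal property of the coproduct $1^{(I)}=\coprod_I 1$ then assembles these into $\tau$ of the morphism. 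Functoriality is the statement that matrix multiplication is respected, which holds because every composite in sight factors through finite biproducts of $1$.

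Next I would equip $\tau$ with its tensor structure. Because $\otimes$ commutes with small colimits in each variable and $1\otimes 1\cong 1$ (the unit constraint of $\mathcal C$), there is a canonical isomorphism $1^{(I)}\otimes 1^{(J)}\cong 1^{(I\times J)}$; composing it with the identification $k^{(I)}\otimes_k k^{(J)}\cong k^{(I\times J)}$ read off from the chosen bases gives the structure isomorphisms $\tau(V)\otimes\tau(W)\xrightarrow{\ \sim\ }\tau(V\otimes_k W)$, while $\tau(k)=1$ furnishes the unit isomorphism. The associativity, unit and symmetry coherence diagrams for $\tau$ then reduce, coprojection by coprojection, to the corresponding coherence diagrams for $1$ in $\mathcal C$ together with naturality of the colimit-interchange isomorphisms. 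Finally one extends $\tau$ from the skeleton to all of $\Mod(k)$ in the standard way. This coherence verification, together with the bookkeeping needed to handle infinite index sets (the role of column-finiteness and of finite biproducts), is the main obstacle — it is not conceptually deep given that $\otimes$ commutes with colimits and $k=\End_{\mathcal C}(1)$, but it is the part that requires care.

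For the uniqueness statement, let $\tau'\colon\Mod(k)\to\mathcal C$ be any tensor functor. Being monoidal it carries $k$ to an object canonically isomorphic to $1$ via its unit constraint, and being $k$-linear and colimit-preserving it satisfies $\tau'(k^{(I)})\cong\mathrm{colim}_{F}\tau'(k)^{\oplus F}\cong 1^{(I)}$ naturally in $I$; this yields a natural isomorphism $\tau'\cong\tau$, and it is monoidal because both functors are. For the uniqueness of such an isomorphism, observe that a monoidal natural isomorphism $\alpha\colon\tau\Rightarrow\tau'$ has $\alpha_k$ forced to equal the composite of the two unit constraints; naturality with respect to the coordinate inclusions $k\hookrightarrow k^{(I)}$, together with the fact that $\tau$ preserves this coproduct, then forces $\alpha_{k^{(I)}}$; and naturality with respect to an arbitrary isomorphism $N\cong k^{(I)}$ forces $\alpha_N$ for every object $N$. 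In particular the identity is the only monoidal automorphism of $\tau$, so $\tau$ is unique up to unique isomorphism, as claimed.
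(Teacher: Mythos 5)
Your proof is correct, but it takes a genuinely different route from the paper. The paper disposes of the proposition in two lines by invoking a general classification theorem (Brandenburg's Proposition 2.2.3 in \cite{MR3144607}): cocontinuous $k$-linear tensor functors $\Mod(A)\to\mathcal C$ correspond to $k$-algebra homomorphisms $A\to\End_{\mathcal C}(1)$, and for $A=k=\End_{\mathcal C}(1)$ there is exactly one such homomorphism. You instead give a direct, bare-hands construction: coordinatize every $k$-vector space as $k^{(I)}$, send it to the coproduct $1^{(I)}$, describe morphisms by column-finite matrices over $\End_{\mathcal C}(1)=k$ acting through finite biproducts, and build the monoidal structure from $1^{(I)}\otimes 1^{(J)}\cong 1^{(I\times J)}$. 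This is in effect a self-contained proof of the special case $A=k$ of the cited result, and it is exactly the ``coordinatization'' that the paper only states informally \emph{after} the proposition; your version makes the paper's remark into the proof itself. What the paper's approach buys is brevity and the reassurance that all coherence conditions are handled once and for all in the reference; what yours buys is independence from \cite{MR3144607} and an explicit description of $\tau$ that is anyway needed later (e.g.\ in Proposition \ref{proposition-subquotient-of-trivial-is-trivial}). Your uniqueness argument --- forcing $\alpha_k$ by the unit constraints, then $\alpha_{k^{(I)}}$ by naturality against the coprojections, then $\alpha_N$ by transport along $N\cong k^{(I)}$ --- is sound; the one place to be careful is that agreement of $\tau$ and $\tau'$ on \emph{morphisms} between skeletal objects rests precisely on the identification $k=\End_{\mathcal C}(1)$ together with $k$-linearity of $\tau'$, which you do invoke. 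The coherence verifications you defer are routine but should not be waved away entirely if this were to replace the paper's proof.
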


\begin{proof} According to (\cite{MR3144607} Proposition 2.2.3), given a commutative $k$-algebra $A$ and an arbitrary tensor category $\mathcal C$ (not necessarily satisfying $\End_{\mathcal{C}}(1)\cong k$) there is an equivalence of categories
between the category of $k$-linear tensor functors  $\Mod(A)\to\mathcal{C}$ and that of $k$-algebra homomorphisms $A\to\End_{\mathcal{C}}(1)$.
But for $k=A=\End_{\mathcal{C}}(1)$  there is only one $k$-algebra homomorphism $k\to\End_{\mathcal{C}}(1)$, namely the identity morphism.
\end{proof}

Since $\tau$ commutes with small direct limits by assumption, it has the following  `coordinatization': given $M\in\Mod(k)$, choose a basis to write it as a direct sum $\bigoplus k$ of copies of $k$. Then $\tau(M)=\bigoplus 1$ with the same index set. Morphisms have a similar description via infinite matrices.

\par An object $X$ of $\mathcal{C}$ will be called \emph{trivial} if $X$ is isomorphic to an object of the form $\tau(M)$ for a $k$-module $M$. The functor $\tau$ will be called the \emph{trivial object functor} of $\mathcal{C}$ over $k$. The category $\Triv(\mathcal{C})$ of trivial objects of $\mathcal{C}$ is defined as the full subcategory of $\mathcal{C}$ spanned by trivial objects.

\begin{proposition}\label{proposition-subquotient-of-trivial-is-trivial} The functor $\tau$
induces an equivalence of tensor categories between $\Mod(k)$ and $\Triv(\mathcal{C})$,
with a quasi-inverse given by the restriction of the functor
\[
(-)^{\nabla} := \Hom_{\mathcal{C}}(1,-) \colon \mathcal{C} \to \Mod(k)
\]
to $\Triv(\mathcal{C})$.

Thus the tensor category $\Triv(\mathcal{C})$ is abelian, and the restriction of the functor $\nabla$ to $\Triv(\mathcal{C})$ is a faithful exact tensor functor.
\end{proposition}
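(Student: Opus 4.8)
The plan is to reduce the whole statement to a single computation about morphisms out of the unit object, after which everything else is formal. Since $k$ is a field, every $k$-module is free, so it suffices to understand $\tau$ on free modules. Write $M\cong k^{(I)}$; by the coordinatization remark $\tau M\cong 1^{(I)}$. For any $k$-module $N$ the universal property of the coproduct gives $\Hom_{\mathcal C}(\tau M,\tau N)=\Hom_{\mathcal C}(1^{(I)},\tau N)=\prod_I\Hom_{\mathcal C}(1,\tau N)$ and likewise $\Hom_k(M,N)=\prod_I N=\prod_I\Hom_k(k,N)$, and under these identifications the map induced by $\tau$ is the $I$-fold product of $\eta_N:=\tau\colon\Hom_k(k,N)\to\Hom_{\mathcal C}(1,\tau N)=N^\nabla$, $m\mapsto\tau(m)$, which is manifestly natural in $N$. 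Hence $\tau$ is fully faithful as soon as each $\eta_N$ is bijective; since $\tau$ is essentially surjective onto $\Triv(\mathcal C)$ by the very definition of the latter, this already yields the claimed equivalence $\tau\colon\Mod(k)\xrightarrow{\sim}\Triv(\mathcal C)$.

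Writing $N\cong k^{(J)}$, bijectivity of $\eta_N$ amounts to the assertion that the natural map $k^{(J)}\to\Hom_{\mathcal C}(1,1^{(J)})$, $(a_j)_j\mapsto\sum_j a_j\iota_j$ (with $\iota_j$ the coproduct inclusions), is an isomorphism. Injectivity is immediate on composing with the coproduct projections $p_j\colon 1^{(J)}\to 1$. For surjectivity, let $f\colon 1\to 1^{(J)}$. Since the unit is simple, any nonzero morphism out of $1$ is a monomorphism and the only subobjects of $1$ are $0$ and $1$; hence the image of $f$ is either $0$ or isomorphic to $1$, and in particular is a finitely generated subobject of $1^{(J)}$. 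The crucial point — and the one place where simplicity of $1$ is genuinely exploited — is that such a subobject is contained in the finite sub-coproduct $1^{(F)}$ for some finite $F\subseteq J$, i.e. $f$ factors through a finite sub-coproduct; this is the statement that the simple unit is a compact object, and it uses that $1^{(J)}$ is the filtered union of its finite sub-coproducts and that such filtered colimits are exact, a property of all the module, quasi-coherent and representation categories to which the theory is applied. Granting it, $f$ lies in $\Hom_{\mathcal C}(1,1^{(F)})=\bigoplus_F k$ because a finite coproduct is a finite product, so $f=\sum_{j\in F}a_j\iota_j$. This "finite support" statement is the only real content of the proof; the same circle of ideas, together with the inductive fact that every subobject of $1^{(F)}$ with $F$ finite is again trivial, shows that sub- and quotient objects of trivial objects are trivial.

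By the previous step $\eta\colon\id_{\Mod(k)}\Rightarrow\nabla\circ\tau$ is a natural isomorphism. For $Y\in\Triv(\mathcal C)$, choosing $M$ with $Y\cong\tau M$ we obtain $\tau(Y^\nabla)\cong\tau((\tau M)^\nabla)\cong\tau M\cong Y$, naturally in $Y$ (the triangle identities together with full faithfulness of $\tau$ make these isomorphisms coherent); thus the restriction of $\nabla$ to $\Triv(\mathcal C)$ is a quasi-inverse of $\tau$. Being equivalent to $\Mod(k)$, the category $\Triv(\mathcal C)$ is abelian, and $\nabla|_{\Triv(\mathcal C)}$, being a quasi-inverse of an equivalence, is itself an equivalence, hence faithful and exact. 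Finally $\tau$ is a strong symmetric monoidal tensor functor, so its inverse $\nabla|_{\Triv(\mathcal C)}$ is strong symmetric monoidal as well, and one checks that this structure coincides with the canonical lax monoidal structure carried by $\nabla=\Hom_{\mathcal C}(1,-)$ (sending $f\colon 1\to X$ and $g\colon 1\to Y$ to $f\otimes g\colon 1\cong 1\otimes 1\to X\otimes Y$, with $k\xrightarrow{\sim}\End_{\mathcal C}(1)$ as unit constraint). Hence $\nabla|_{\Triv(\mathcal C)}$ is a faithful exact tensor functor, as asserted. The main obstacle, as indicated, is the compactness of the simple unit object inside an arbitrary coproduct of copies of itself; all the rest is bookkeeping.
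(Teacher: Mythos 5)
Your proposal follows the same route as the paper's proof---reduce via coordinatization to computing morphisms out of the unit object---but you have correctly isolated the one step that is not purely formal, namely the identification $\Hom_{\mathcal C}(1,1^{(J)})\cong k^{(J)}$ for an infinite index set $J$, i.e.\ the fact that a morphism from $1$ into a coproduct of copies of $1$ factors through a finite sub-coproduct. The paper buries this in the remark that ``morphisms have a similar description via infinite matrices'' and in the assertion that the coordinatized statement ``is clear''; everything else in both arguments (injectivity via the projections, essential surjectivity by definition of $\Triv(\mathcal{C})$, transport of the abelian and monoidal structures along the equivalence) is bookkeeping, as you say. The one caveat is that your justification of the finite-support step invokes exactness of filtered colimits (equivalently, compactness of the simple unit), which is not among the axioms imposed on a tensor category at this point: simplicity of $1$ tells you that the image of $f\colon 1\to 1^{(J)}$ is $0$ or a copy of $1$, but without an AB5-type hypothesis you cannot conclude that this subobject lands in some $1^{(F)}$ with $F$ finite. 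So, as written, your argument proves the proposition under a mild additional hypothesis. This is defensible---the hypothesis holds in every category to which the theory is applied, and once the ``pointed'' condition is available it is automatic, since a faithful exact colimit-preserving tensor functor $\vartheta\colon\mathcal{C}\to\Mod(R)$ sends $1$ to the compact object $R$ and therefore detects finite support---but note that the paper states the proposition \emph{before} introducing that condition, so either the extra hypothesis should be made explicit or the finite-support claim needs an argument from simplicity alone; the paper's own proof does not supply one either.
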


\begin{proof}
To check that $\nabla$ is a quasi-inverse to $\tau$ we may reduce via `coordinatization' to the full subcategories spanned by the unit objects, where it is clear. Exactness of $\nabla$ on $\Triv(\mathcal{C})$ means by simpleness of $1$ that every epimorphism $X\to 1$ splits in $\Triv(\mathcal{C})$, which follows by writing $X$ as a direct sum of copies of $1$ and using simpleness again.
\end{proof}

We can use this category equivalence for detecting dualizable objects (in the sense of tensor categories) among trivial objects.

\begin{corollary}\label{proposition-dualizable-trivial-objects} A trivial object $X$ is dualizable  if and only if $X^{\nabla}$ is a finite dimensional $k$-vector space. In this case its dual is again a trivial object.
\end{corollary}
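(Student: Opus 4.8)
The plan is to reduce the statement to linear algebra via the tensor equivalence $\tau\colon\Mod(k)\xrightarrow{\sim}\Triv(\mathcal C)$ of Proposition \ref{proposition-subquotient-of-trivial-is-trivial}, under which a trivial object $X$ corresponds to the $k$-vector space $X^\nabla$. Since a $k$-vector space is dualizable in $\Mod(k)$ precisely when it is finite-dimensional, and since an equivalence of tensor categories is strong monoidal and therefore sends a dual pair (object, dual, evaluation, coevaluation) to a dual pair, one gets the ``if'' direction at once: when $X^\nabla$ is finite-dimensional, $X\cong\tau(X^\nabla)$ is dualizable inside $\Triv(\mathcal C)$, with dual $\tau((X^\nabla)^\vee)$ again a trivial object; and because the witnessing evaluation and coevaluation morphisms lie in the full subcategory $\Triv(\mathcal C)\subseteq\mathcal C$, they also exhibit $X$ as dualizable in $\mathcal C$, with trivial dual. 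This would simultaneously give the last sentence of the corollary.

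The content of the statement is the converse: a priori $X$ could acquire in the ambient category $\mathcal C$ a dual that is not trivial, and this possibility must be ruled out. I would argue by contradiction using the categorical dimension. Recall the standard facts that every dualizable object $A$ of a symmetric monoidal category has a dimension $\dim A\in\End_{\mathcal C}(1)=k$, that $\dim 1=1_k$, and that $\dim(A\oplus B)=\dim A+\dim B$ whenever $A$ and $B$ are dualizable. Suppose $X$ is dualizable in $\mathcal C$ but $M:=X^\nabla$ is infinite-dimensional. Choosing a $k$-basis of $M$ produces an isomorphism $M\cong M\oplus k$, so applying $\tau$ and Proposition \ref{proposition-subquotient-of-trivial-is-trivial} gives $X\cong X\oplus 1$ in $\mathcal C$. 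Taking dimensions of both sides (legitimate, all objects involved being dualizable) forces $\dim X=\dim X+1_k$, hence $1_k=0_k$ in the field $k$ --- absurd. Therefore $X^\nabla$ is finite-dimensional, and the first paragraph then finishes the proof.

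The subtle point is exactly this converse. The naive approach --- take a hypothetical dual $X^\vee$ of $X=\bigoplus_{i\in I}1$ and try to show that the coevaluation $1\to X\otimes X^\vee\cong\bigoplus_{i\in I}X^\vee$ is supported on finitely many summands --- appears to need the unit object $1$ to be compact, which is not among the standing hypotheses. The key realization I would stress is that this can be avoided completely: the shift isomorphism $\bigoplus_{\mathbb N}1\cong\bigl(\bigoplus_{\mathbb N}1\bigr)\oplus 1$ (equivalently $M\cong M\oplus k$ for infinite-dimensional $M$), fed through additivity of the categorical dimension, turns infinite-dimensionality of $X^\nabla$ into the impossible identity $1_k=0_k$. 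Everything else is routine transport of structure along $\tau$.
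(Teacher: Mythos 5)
Your proof is correct, and for the easy half it coincides with the paper's: both transport the statement along the tensor equivalence $\tau\colon\Mod(k)\xrightarrow{\sim}\Triv(\mathcal C)$, so that $X\cong\tau(X^\nabla)$ is dualizable with trivial dual $\tau((X^\nabla)^\vee)$ whenever $X^\nabla$ is finite dimensional. Where you genuinely diverge is the converse. The paper's proof is a one-liner ("the dualizable objects in $\Mod(k)$ are the finite-dimensional vector spaces"), which on its face only establishes that $X$ is dualizable \emph{in the full subcategory} $\Triv(\mathcal C)$ iff $X^\nabla$ is finite dimensional; passing from "dualizable in $\mathcal C$" to "dualizable in $\Triv(\mathcal C)$" is exactly the point you flag, since a hypothetical dual in $\mathcal C$ need not be trivial and the unit is not assumed compact. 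Your fix --- feed the shift isomorphism $X\cong X\oplus 1$ (for $X^\nabla$ infinite dimensional) into additivity and isomorphism-invariance of the categorical dimension to get $1_k=0_k$ in the field $k=\End_{\mathcal C}(1)$ --- is sound: additivity of $\dim$ over direct sums of dualizable objects holds in any additive symmetric monoidal category with biadditive tensor product, and $\tau$ preserves direct sums since it commutes with small colimits. What each approach buys: the paper's argument is shorter and, once the pointedness hypothesis is available (it is introduced only \emph{after} this corollary), the gap could alternatively be closed by applying the faithful tensor functor $\vartheta$ and noting that an infinite free $R$-module is not dualizable in $\Mod(R)$; your dimension argument is self-contained, needs no pointedness, and makes the reflection of dualizability along the inclusion $\Triv(\mathcal C)\subseteq\mathcal C$ explicit rather than implicit.
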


\begin{proof} In view of the proposition, the first statement follows from the fact that the dualizable objects in $\Mod(k)$ are the finite-dimensional vector spaces. As for the second, the dual of a trivial object $X$  is isomorphic to $\tau((X^{\nabla})^{\vee})$, where ${}^{\vee}$ denotes the $k$-dual of a vector space.
\end{proof}

One drawback of the results so far is that the subcategory $\Triv(\mathcal{C})$ may not be a {\em fully} abelian category of $\mathcal C$, i.e. it is not clear that the kernel and a cokernel of a morphism between trivial objects are the same in $\Triv(\mathcal{C})$ and $\mathcal C$. To remedy this, we impose an extra condition.

\begin{definition}
A tensor category $\mathcal C$ is {\em pointed} if there exits a faithful exact tensor functor $\vartheta:\, {\mathcal C}\to\Mod(R)$ for some ring $R$.
\end{definition}

Main examples to bear in mind are categories of differential and difference modules as well as representation categories of affine group schemes. In these examples the faithful exact tensor functor $\vartheta$ is just the forgetful functor to the appropriate module category.

\begin{proposition}\label{fullyab}
If $\mathcal C$ is pointed, the inclusion functor $\Triv(\mathcal{C})\to\mathcal C$ is exact, and hence $\Triv(\mathcal{C})$ is a fully abelian tensor subcategory of $\mathcal C$.
\end{proposition}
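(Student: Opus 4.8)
The plan is to reduce everything to the statement that $\Triv(\mathcal{C})$ is stable under the formation of kernels and cokernels inside $\mathcal{C}$. Granting this, for a morphism $f$ in $\Triv(\mathcal{C})$ the object $\ker_{\mathcal{C}}(f)$ is trivial and satisfies the kernel universal property in $\mathcal{C}$, hence a fortiori in the full subcategory $\Triv(\mathcal{C})$, so it is also $\ker_{\Triv(\mathcal{C})}(f)$; likewise for cokernels. Thus the inclusion functor preserves kernels and cokernels, i.e. it is exact, and $\Triv(\mathcal{C})$ is a fully abelian tensor subcategory (it is abelian by Proposition~\ref{proposition-subquotient-of-trivial-is-trivial}). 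By the same proposition every morphism between trivial objects has the form $\tau(g)$ for a unique $k$-linear map $g\colon M\to N$, so it suffices to show that $\ker_{\mathcal{C}}(\tau(g))$ and $\coker_{\mathcal{C}}(\tau(g))$ are trivial. For the cokernel this needs nothing beyond the standing hypotheses: cokernels are finite colimits and $\tau$ preserves small colimits, so $\coker_{\mathcal{C}}(\tau(g))=\tau(\coker_{\Mod(k)}(g))$, which is trivial.

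The substantive point is the kernel, and here pointedness is used. Fix a faithful exact tensor functor $\vartheta\colon\mathcal{C}\to\Mod(R)$. Then $\vartheta\circ\tau\colon\Mod(k)\to\Mod(R)$ is a $k$-linear, small-colimit-preserving tensor functor, so by `coordinatization' (equivalently, by the cited Proposition 2.2.3 of \cite{MR3144607}) it is isomorphic to the base-change functor $-\otimes_k R$, where $R$ is viewed as a $k$-algebra via the ring map $k=\End_{\mathcal{C}}(1)\to\End_{\Mod(R)}(R)=R$ induced by $\vartheta$. Since $k$ is a field, $R$ is flat over $k$, so $\vartheta\circ\tau$ is exact. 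Now write the defining left-exact sequence $0\to K\xrightarrow{j}M\xrightarrow{g}N$ in $\Mod(k)$, i.e. $j=\ker(g)$. Applying $\tau$ and using $\tau(g)\circ\tau(j)=\tau(g\circ j)=0$ produces a canonical comparison morphism $c\colon\tau(K)\to\ker_{\mathcal{C}}(\tau(g))$ in $\mathcal{C}$. Applying $\vartheta$ and using exactness of $\vartheta$ and of $\vartheta\circ\tau$ identifies $\vartheta(c)$ with the canonical map $K\otimes_k R\to\ker_{\Mod(R)}(g\otimes_k\mathrm{id}_R)$, which is an isomorphism because $-\otimes_k R$ is exact. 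A faithful exact functor reflects isomorphisms (it sends the kernel and cokernel of $c$ to those of $\vartheta(c)$, which vanish, and a faithful functor detects zero objects), so $c$ is an isomorphism and $\ker_{\mathcal{C}}(\tau(g))\cong\tau(K)$ is trivial. Running the same argument with $0\to\coker(g)\to\cdots$ gives an alternative proof of triviality of the cokernel as well.

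The main obstacle is precisely the behaviour of kernels: a priori $\tau$ need not be left exact, so without an auxiliary exact faithful functor there is no reason for $\ker_{\mathcal{C}}(\tau(g))$ to be trivial, and the inclusion $\Triv(\mathcal{C})\to\mathcal{C}$ could fail to detect kernels. The role of pointedness is exactly to transport this question to the exactness of base change along $k\to R$, which is automatic because $k$ is a field; everything else is formal manipulation with the equivalence $\tau$ and the universal properties of kernels and cokernels.
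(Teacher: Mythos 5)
Your proof is correct and follows essentially the same route as the paper: both identify $\vartheta\circ\tau$ with base change along $k\to R$, observe this is (faithfully) flat since $k$ is a field, and deduce exactness of $\tau$ by reflecting along the faithful exact functor $\vartheta$. You merely spell out in more detail the formal steps (cokernels being automatic from colimit preservation, the kernel comparison morphism, and reflection of isomorphisms) that the paper compresses into ``hence $\tau$ is also faithful exact.''
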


\begin{proof}
By (\cite{MR3144607}, Corollary 2.2.4), the composite functor
\[
\vartheta \circ \tau \colon \Mod(k) \to \Mod(R)
\]
is isomorphic to the base change functor induced by a $k$-morphism $k \to R$. But there is only one such morphism, the structure morphism which is moreover faithfully flat as $k$ is a field by our assumption. It follows that the pullback functor $\sigma^* \cong \vartheta \circ \tau$ is faithful exact, hence $\tau$ is also faithful exact.
\end{proof}

We now come to an abstract version of the notion of solvability for differential modules.

\begin{definition}
Let $\mathcal{C}$ be a tensor category, $X$  an object of $\mathcal{C}$ and $\mathcal{A}$  a ring object in $\mathcal{C}$. We say that the object $X$ is solvable in $\mathcal{A}$ if the $\mathcal{A}$-module $\mathcal{A} \otimes X$ is a trivial object in $\Mod_{\mathcal{C}}(\mathcal{A})$.
\end{definition}

Recall that a ring $\mathcal{A}$ in $\mathcal C$ is {\em flat} (resp. {\em faithfully flat}) (over the unit ring $1$) if the base change functor
$\mathcal{A} \otimes - \colon \mathcal{C} \to \Mod_{\mathcal{C}}(\mathcal{A})$
is an exact (resp. a faithful exact) functor.

\begin{corollary}\label{solv}
Let $\mathcal C$ be a pointed tensor category, and $\mathcal A$ a faithfully flat simple ring in $\mathcal C$. The full subcategory of $\mathcal C$ spanned by objects solvable in $\mathcal A$ is a fully abelian subcategory closed under arbitrary small colimits and tensor products. The unit object of $\mathcal{C}$ is solvable in $\mathcal{A}$.
\end{corollary}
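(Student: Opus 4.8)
The plan is to identify the full subcategory $\mathcal{S}$ of objects solvable in $\mathcal A$ with the preimage $F^{-1}(\Triv(\mathcal D))$ of the trivial objects under the base change tensor functor $F:=\mathcal A\otimes-\colon\mathcal C\to\mathcal D$, where $\mathcal D:=\Mod_{\mathcal C}(\mathcal A)$, and then to deduce the four assertions from the structural properties of $\Triv(\mathcal D)$ inside $\mathcal D$ established in Propositions \ref{proposition-subquotient-of-trivial-is-trivial} and \ref{fullyab}.

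First I would verify that $\mathcal D$ is itself a pointed tensor category in the sense of this section. Simplicity of $\mathcal A$ gives, by the same Schur-type argument as for $1$, that $\ell:=\End_{\mathcal A}(\mathcal A)$ is a field and that $\mathcal D$ is an $\ell$-linear abelian symmetric monoidal category; small colimits exist in $\mathcal D$ and are computed on underlying objects (this uses that $\mathcal A\otimes-$ commutes with them), and they commute with $\otimes_{\mathcal A}$ because $\otimes_{\mathcal A}$ is a colimit built from $\otimes$. For pointedness, if $\vartheta\colon\mathcal C\to\Mod(R)$ is a faithful exact tensor functor as in the definition of pointed, then $R':=\vartheta(\mathcal A)$ is a commutative ring and $\vartheta$ induces a functor $\mathcal D\to\Mod(R')$ which is monoidal (as $\vartheta$ preserves colimits), faithful, and exact — faithfulness and exactness being detected after applying the forgetful functor $\mathcal D\to\mathcal C$, which is exact since kernels and cokernels of $\mathcal A$-module morphisms are computed on underlying objects. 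Hence Proposition \ref{fullyab} applies to $\mathcal D$, so the inclusion $\Triv(\mathcal D)\hookrightarrow\mathcal D$ is exact. Combined with Proposition \ref{proposition-subquotient-of-trivial-is-trivial} and the fact that the trivial object functor $\tau_{\mathcal D}\colon\Mod(\ell)\to\mathcal D$ preserves small colimits and tensor products, this shows that $\Triv(\mathcal D)$ is closed in $\mathcal D$ under kernels, cokernels, small colimits and $\otimes_{\mathcal A}$.

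It remains to transport this across $F$. The functor $F$ is faithful and exact because $\mathcal A$ is faithfully flat, it preserves small colimits because $\otimes$ does, and it is a tensor functor sending $1$ to $\mathcal A$, the unit object of $\mathcal D$, via the canonical isomorphism $\mathcal A\otimes(X\otimes Y)\cong(\mathcal A\otimes X)\otimes_{\mathcal A}(\mathcal A\otimes Y)$. Since $F(1)\cong\mathcal A$ is trivial, $1$ is solvable in $\mathcal A$. For a morphism $f$ between solvable objects, $F(\ker f)=\ker(Ff)$ and $F(\coker f)=\coker(Ff)$ lie in $\Triv(\mathcal D)$ by exactness of $F$, so $\ker f$ and $\coker f$ are solvable; hence $\mathcal S$ is a full additive subcategory of $\mathcal C$ closed under kernels and cokernels, and any such subcategory is automatically abelian with exact inclusion (for $f\colon X\to Y$ in $\mathcal S$, the coimage $\coker(\ker f\to X)$ and the image $\ker(Y\to\coker f)$ lie again in $\mathcal S$, and the canonical map between them is an isomorphism in $\mathcal C$, hence in the full subcategory $\mathcal S$). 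Finally $F(\operatorname{colim}_iX_i)\cong\operatorname{colim}_iF(X_i)$ and $F(X\otimes Y)\cong F(X)\otimes_{\mathcal A}F(Y)$ again land in $\Triv(\mathcal D)$, giving closure of $\mathcal S$ under small colimits and tensor products. The only step that genuinely requires attention is checking that $\mathcal D$ is a pointed tensor category, since only then does Proposition \ref{fullyab} deliver exactness of $\Triv(\mathcal D)\hookrightarrow\mathcal D$, which is what forces $\mathcal S$ to be closed under kernels; closure under cokernels, colimits and tensor products, by contrast, already follows from $\tau_{\mathcal D}$ being colimit-preserving and monoidal, and the rest is a formal diagram chase.
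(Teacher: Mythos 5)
Your proof is correct and follows essentially the same route as the paper, which deduces the corollary from Propositions \ref{proposition-subquotient-of-trivial-is-trivial} and \ref{fullyab} by viewing solvable objects as the preimage of $\Triv(\Mod_{\mathcal C}(\mathcal A))$ under the faithful exact base change functor $\mathcal A\otimes-$. The one point you elaborate that the paper leaves implicit is the verification that $\Mod_{\mathcal C}(\mathcal A)$ is itself a pointed tensor category so that Proposition \ref{fullyab} applies to it; this is a worthwhile detail and your check of it is sound.
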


\begin{proof}
This follows from Propositions \ref{proposition-subquotient-of-trivial-is-trivial} and \ref{fullyab} since base change by $\mathcal A$ is an exact faithful functor by assumption.
\end{proof}

As for dualizable objects, we have:

\begin{proposition}\label{proposition-dualizable-solvable-objects} In the situation of the previous corollary an $\mathcal A$-solvable object $X$ is dualizable in $\mathcal C$ if and only if $\mathcal{A} \otimes X$ is dualizable in $\Mod_{\mathcal{C}}(\mathcal{A})$.

If moreover $\mathcal A$ is a simple ring, then an $\mathcal{A}$-solvable object $X$ is dualizable if and only if the $\End_{\mathcal{A}}(\mathcal{A})$-vector space $\Hom_{\mathcal{A}}(\mathcal{A}, \mathcal{A}\otimes X)$ is finite dimensional. In this case the dual of $X$ is also $\mathcal{A}$-solvable.
\end{proposition}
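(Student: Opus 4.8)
The plan is to prove the first assertion and then read the second off it. The ``only if'' in the first assertion needs no solvability: $\mathcal{A}\otimes-\colon\mathcal{C}\to\Mod_{\mathcal{C}}(\mathcal{A})$ is a $k$-linear tensor functor, and any tensor functor carries a dualizable object $X$, together with its evaluation $X^{\vee}\otimes X\to 1$ and coevaluation $1\to X\otimes X^{\vee}$, to evaluation and coevaluation morphisms for $\mathcal{A}\otimes X$; hence $\mathcal{A}\otimes X$ is dualizable, with dual $\mathcal{A}\otimes X^{\vee}$.

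For the ``if'' direction I would first use solvability to normalize $\mathcal{A}\otimes X$. The category $\Mod_{\mathcal{C}}(\mathcal{A})$ is itself a tensor category, with simple unit $\mathcal{A}$, so Corollary \ref{proposition-dualizable-trivial-objects} applies inside it. As $X$ is $\mathcal{A}$-solvable, $\mathcal{A}\otimes X$ is a trivial object of $\Mod_{\mathcal{C}}(\mathcal{A})$; by that corollary it is dualizable precisely when $\Hom_{\mathcal{A}}(\mathcal{A},\mathcal{A}\otimes X)$ is finite dimensional over the field $\End_{\mathcal{A}}(\mathcal{A})$, and then $\mathcal{A}\otimes X\cong\mathcal{A}^{\oplus n}$ for $n$ that dimension, with dual again the finite free (hence trivial) module $\mathcal{A}^{\oplus n}$. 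It remains to descend dualizability from $\Mod_{\mathcal{C}}(\mathcal{A})$ down to $\mathcal{C}$ along the faithfully flat $\mathcal{A}$: I would equip $(\mathcal{A}\otimes X)^{\vee}\cong\mathcal{A}^{\oplus n}$ with the descent datum dual to the tautological one on $\mathcal{A}\otimes X$, descend it to an object $X^{\vee}$ of $\mathcal{C}$, and descend likewise the evaluation and coevaluation morphisms (morphisms of $\mathcal{A}$-modules compatible with the descent data); the triangle identities for $(X,X^{\vee})$ then hold because they hold after the faithful functor $\mathcal{A}\otimes-$. I expect this descent step to be the only real obstacle; having reduced beforehand to finite free modules it is a routine instance of faithfully flat descent, which one can carry out by hand or invoke from a descent formalism for $\mathcal{C}$ once that is in place.

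Granting the first assertion, the rest is formal. By it, $X$ is dualizable in $\mathcal{C}$ iff $\mathcal{A}\otimes X$ is dualizable in $\Mod_{\mathcal{C}}(\mathcal{A})$, and since $\mathcal{A}\otimes X$ is trivial, Corollary \ref{proposition-dualizable-trivial-objects} inside $\Mod_{\mathcal{C}}(\mathcal{A})$ rephrases this as finite dimensionality of $\Hom_{\mathcal{A}}(\mathcal{A},\mathcal{A}\otimes X)$ over $\End_{\mathcal{A}}(\mathcal{A})$, a field because $\mathcal{A}$ is simple. Finally, if $X$ is dualizable then $\mathcal{A}\otimes X^{\vee}$ is a dual of $\mathcal{A}\otimes X$ by the first paragraph; since $\mathcal{A}\otimes X$ is trivial and dualizable, the last part of Corollary \ref{proposition-dualizable-trivial-objects} makes this dual trivial as well, so $X^{\vee}$ is $\mathcal{A}$-solvable.
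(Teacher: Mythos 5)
Your proposal is correct and follows essentially the same route as the paper: the only substantive step is descending the dual of $\mathcal{A}\otimes X$ along the faithfully flat ring $\mathcal{A}$ by dualizing the tautological descent datum, and both you and the authors reduce the second assertion to Corollary \ref{proposition-dualizable-trivial-objects} applied inside $\Mod_{\mathcal{C}}(\mathcal{A})$. The one cosmetic difference is that you first use solvability to normalize $\mathcal{A}\otimes X$ to a finite free module before descending, whereas the paper runs the descent argument for an arbitrary dualizable $\mathcal{A}$-module; this changes nothing essential.
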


Note that if $\mathcal A$ is simple, the ring $\End_{\mathcal{A}}(\mathcal{ A})$ is a field (it is commutative as $\mathcal A$ is the unit object of $\Mod_{\mathcal{C}}(\mathcal{A})$, and a skew field by a Schur Lemma type argument).

\begin{proof} This is an application of faithfully flat descent in tensor categories (see \cite{MR1106898}, 4.1--4.2 and \cite{MR1012168}, \S 5). Recall that a descent datum on an $\mathcal{A}$-module $\mathcal{M}$ is an isomorphism $\mathcal{A} \otimes \mathcal{M} \cong \mathcal{M} \otimes \mathcal{A}$ of $\mathcal{A} \otimes \mathcal{A}$-modules that satisfies the cocycle condition. As in the classical case, any $\mathcal M$ obtained via base change by $\mathcal A$ carries a descent datum and conversely, the descent datum is effective for faithfully flat $A$.

Assume now $X$ is an object such that $\mathcal{M}:=\mathcal{A} \otimes X$ is dualizable with dual $\mathcal N$. We construct a descent datum on $\mathcal{N}$ as follows. The $\mathcal{A} \otimes \mathcal{A}$-module $\mathcal{A} \otimes \mathcal{M}$ is isomorphic to $(\mathcal{A} \otimes \mathcal{A}) \otimes_{\mathcal{A}} \mathcal{M} $, hence it has a dual in the category of $\mathcal{A} \otimes \mathcal{A}$-modules, namely $\mathcal{A} \otimes \mathcal{N} \cong (\mathcal{A} \otimes \mathcal{A}) \otimes_{\mathcal{A}} \mathcal{N}$. The same can be said of $\mathcal{M} \otimes \mathcal{A}$. Since the dual in a tensor category is uniquely determined, we can dualize the isomorphism giving the descent datum on $\mathcal{M}$ and obtain an isomorphism
\[
\mathcal{A} \otimes \mathcal{N} \cong (\mathcal{A} \otimes \mathcal{A}) \otimes_{\mathcal{A}} \mathcal{N} \cong \mathcal{N} \otimes_{\mathcal{A}} (\mathcal{A} \otimes \mathcal{A}) \cong \mathcal{N} \otimes \mathcal{A}.
\]
By a similar argument we deduce a cocycle condition for $\mathcal N$ from that on $\mathcal M$ and conclude by faithfully flat descent that $\mathcal N$ is of the form $\mathcal{A} \otimes Y$ for an object $Y$, where $Y$ satisfies the axioms for a dual of $X$ in the tensor category $\mathcal C$. This proves the first statement, and the second one follows from Corollary \ref{proposition-dualizable-trivial-objects}, again via descent.
\end{proof}

\section{Abstract Picard-Vessiot theory}

We now come to the first key definition in this paper. In the case of differential modules over a differential ring it specializes to the notion of Picard-Vessiot rings as defined in (\cite{MR1862024}, \S 3.4) and (\cite{MR3215927}, \S 2.4).

\begin{definition}\label{definition-p-v-ring}
Let $\mathcal{C}$ be a tensor category, and $X$ a dualizable object of $\mathcal{C}$. A ring $\mathcal{P}$ in $\mathcal{C}$ is called a Picard-Vessiot ring for $X$ in $\mathcal{C}$ if it satisfies the following properties:
\begin{enumerate}
\item $\mathcal{P}$ is a faithfully flat simple ring in $\mathcal{C}$,
\item the homomorphism $k = \End_{\mathcal{C}}(1) \to \End_{\mathcal{P}}(\mathcal{P})$, induced by the morphism $1 \to \mathcal{P}$, is an isomorphism,
\item the object $X$ is solvable in $\mathcal{P}$,
\item the ring $\mathcal{P}$ is minimal with these properties, i.e. if $\mathcal{P}'$ is another ring in $\mathcal{C}$ satisfying the previous properties and $\mathcal{P}' \to \mathcal{P}$ is a ring homomorphism, then it is an isomorphism.
\end{enumerate}
\end{definition}

For a dualizable object $X$ of $\mathcal{C}$ we shall denote by $\langle X \rangle_{\otimes}$ the full essential subcategory of $\mathcal{C}$ consisting of subquotients of finite direct sums of objects of the form $X^{\otimes i} \otimes (X^{\vee})^{\otimes j}$. The category of finite dimensional vector spaces over a field $k$ will be denoted by $\Vecf(k)$.

\begin{proposition}\label{proposition-p-v-ring-induces-fibre-functor}
Assume that the tensor category $\mathcal C$ is pointed, and  there exists a Picard-Vessiot ring $\mathcal{P}$ for the dualizable object $X$ in $\mathcal{C}$. Then
\[
\omega_{\mathcal{P}} = \Hom_{\mathcal{P}}(\mathcal{P}, \mathcal{P} \otimes -) \colon \langle X \rangle_{\otimes} \to \Vecf(k)
\]
is a $k$-linear faithful exact tensor functor, and
 $\langle X \rangle_{\otimes}$ equipped with $\omega_{\mathcal{P}}$ is a neutral Tannakian category over $k$.
\end{proposition}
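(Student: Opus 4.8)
The plan is to produce the fibre functor as the restriction of the base-change functor $\mathcal{P}\otimes-$ to $\langle X\rangle_\otimes$, followed by the equivalence $(-)^\nabla_{\mathcal P}$ of Proposition \ref{proposition-subquotient-of-trivial-is-trivial} applied inside $\Mod_{\mathcal C}(\mathcal P)$ (which is itself a pointed tensor category with unit $\mathcal P$, since $\mathcal P$ is simple and $\mathcal C$ is pointed). First I would check that $\omega_{\mathcal P}$ does land in $\Vecf(k)$: every object $Y\in\langle X\rangle_\otimes$ is a subquotient of a finite direct sum of the $X^{\otimes i}\otimes(X^\vee)^{\otimes j}$, each of which is solvable in $\mathcal P$ by Corollary \ref{solv} (using that $X$, and hence $X^\vee$ by Proposition \ref{proposition-dualizable-solvable-objects}, is $\mathcal P$-solvable); since $\mathcal P$-solvable objects form a fully abelian subcategory, $Y$ is $\mathcal P$-solvable, so $\mathcal P\otimes Y$ is a trivial $\mathcal P$-module. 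Dualizability of $Y$ (it is a subquotient of dualizables in a category where these are closed under the relevant operations, or more directly: $\mathcal P\otimes Y$ is dualizable over $\mathcal P$) combined with Proposition \ref{proposition-dualizable-solvable-objects} gives that $\Hom_{\mathcal P}(\mathcal P,\mathcal P\otimes Y)$ is a finite-dimensional vector space over the field $\End_{\mathcal P}(\mathcal P)$, which by property (2) of Definition \ref{definition-p-v-ring} is exactly $k$.

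Next I would verify that $\omega_{\mathcal P}$ is $k$-linear, faithful, exact, and monoidal. Exactness and faithfulness follow by composing two faithful exact functors: base change $\mathcal P\otimes-$ is faithful exact because $\mathcal P$ is faithfully flat (property (1)), and $(-)^\nabla_{\mathcal P}$ restricted to trivial $\mathcal P$-modules is faithful exact by Proposition \ref{proposition-subquotient-of-trivial-is-trivial} (applied in $\Mod_{\mathcal C}(\mathcal P)$, which is pointed via the composite $\mathcal C\to\Mod(R)$ of Proposition \ref{fullyab} together with restriction of scalars, or simply because $\mathcal P$ simple suffices for the argument of that proposition). The monoidal structure: $\mathcal P\otimes-$ is symmetric monoidal, and on trivial $\mathcal P$-modules $(-)^\nabla_{\mathcal P}$ is a tensor equivalence, again by Proposition \ref{proposition-subquotient-of-trivial-is-trivial}; one needs the natural isomorphism $(\mathcal P\otimes Y)^\nabla\otimes_k(\mathcal P\otimes Z)^\nabla\xrightarrow{\sim}(\mathcal P\otimes(Y\otimes Z))^\nabla$, which holds because the first two tensor up over $\mathcal P$ to $\mathcal P\otimes Y\otimes_{\mathcal P}\mathcal P\otimes Z\cong\mathcal P\otimes(Y\otimes Z)$ and $\nabla_{\mathcal P}$ is monoidal on trivial objects.

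Finally, to conclude that $\langle X\rangle_\otimes$ with $\omega_{\mathcal P}$ is a neutral Tannakian category over $k$, I would invoke the recognition theorem (Deligne, \cite{delignemilne}): a rigid abelian $k$-linear tensor category with $\End(1)=k$ admitting a $k$-linear faithful exact tensor functor to $\Vecf(k)$ is neutral Tannakian. The hypotheses to check are that $\langle X\rangle_\otimes$ is abelian (it is fully abelian in $\mathcal C$ since $\mathcal C$ is pointed and $\mathcal P$-solvable objects form a fully abelian subcategory containing $\langle X\rangle_\otimes$), that $\End(1)=k$ (inherited from $\mathcal C$), and that every object is rigid — this is the point that requires the most care. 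For the generators $X^{\otimes i}\otimes(X^\vee)^{\otimes j}$ rigidity is clear; for a general subquotient one uses that $\mathcal P\otimes-$ reflects dualizability (Proposition \ref{proposition-dualizable-solvable-objects}) and that in $\Mod_{\mathcal C}(\mathcal P)$, being trivial of finite $\nabla_{\mathcal P}$-rank implies dualizable (Corollary \ref{proposition-dualizable-trivial-objects}), so every object of $\langle X\rangle_\otimes$ is dualizable, and then abelian-ness forces the tensor category to be rigid in the strong sense. I expect this verification of rigidity of arbitrary subquotients — i.e. that a subobject or quotient of a dualizable $\mathcal P$-solvable object is again dualizable — to be the main obstacle, and the essential mechanism is faithfully flat descent of dualizability established in Proposition \ref{proposition-dualizable-solvable-objects} together with the corresponding statement in $\Vecf(k)$, where subquotients of finite-dimensional spaces are finite-dimensional.
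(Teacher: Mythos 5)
Your proposal is correct and follows essentially the same route as the paper: factor $\omega_{\mathcal P}$ as $\Hom_{\mathcal P}(\mathcal P,-)\circ(\mathcal P\otimes-)$, use Corollary \ref{solv} and Proposition \ref{proposition-dualizable-solvable-objects} to see that every object of $\langle X\rangle_\otimes$ is $\mathcal P$-solvable with finite-dimensional solution space over $\End_{\mathcal P}(\mathcal P)=k$, and apply Proposition \ref{proposition-subquotient-of-trivial-is-trivial} inside $\Mod_{\mathcal C}(\mathcal P)$ for faithfulness, exactness and the tensor structure. Your closing discussion of rigidity of subquotients is additional care the paper does not spell out here (it instead imposes rigidity of $\langle X\rangle_\otimes$ as an explicit hypothesis in Theorem \ref{pvfibfun}), and your descent-based justification is consistent with the paper's machinery.
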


\begin{proof}
Firstly, by Corollary \ref{solv} and Proposition \ref{proposition-dualizable-solvable-objects} every object of $\langle X \rangle_{\otimes}$ is solvable. Base change by $\mathcal P$ is fully faithful and maps objects of $\langle X \rangle_{\otimes}$ to the subcategory of trivial objects of $\Mod_{\mathcal{C}}(\mathcal{P})$. On the latter category the functor $\Hom_{\mathcal{P}}(\mathcal{P}, -)$ is none but the functor $\nabla$, hence by Proposition \ref{proposition-subquotient-of-trivial-is-trivial} the composite $\omega_{\mathcal{P}} = \Hom_{\mathcal{P}}(\mathcal{P}, -) \circ (\mathcal{P} \otimes -)$ is a faithful exact tensor functor. Finally, $\omega_{\mathcal{P}}$ has values in $\Vecf(k)$ by Proposition \ref{proposition-dualizable-solvable-objects}.
\end{proof}

The Tannakian fundamental group scheme $G =\underline{\Aut}^{\otimes}(\omega_{\mathcal{P}})$ will be called the \emph{Galois group scheme of $X$} (pointed in $\omega_{\mathcal{P}}$).

\begin{remark}\label{rewrite}\rm
In the situation of the proposition we have isomorphisms of $k$-vector spaces \[
\omega_{\mathcal{P}}(Y) =\Hom_{\mathcal{P}}(\mathcal{P}, \mathcal{P} \otimes Y) \cong \Hom_{\mathcal{C}}(1, \mathcal{P} \otimes Y)\cong \Hom_{\mathcal{C}}(Y^{\vee}, \mathcal{P}).
\]  for every $Y \in \langle X \rangle_{\otimes}$.
 In other words, the vector space $\omega_{\mathcal{P}}(Y)$  can be viewed as the vector space of "solutions" of the dual $Y^\vee$ of $Y$ in $\mathcal{P}$.
\end{remark}

We state now the converse of Proposition \ref{proposition-p-v-ring-induces-fibre-functor}.

\begin{theorem}\label{pvfibfun}
Let $X$ be a dualizable object of the pointed tensor category $\mathcal{C}$ such that $\langle X \rangle_{\otimes}$ is a rigid $k$-linear abelian tensor subcategory of $\mathcal{C}$. The map $\mathcal{P}\mapsto \omega_{\mathcal P}$ induces a bijective correspondence between isomorphism classes of Picard-Vessiot rings for $X$ in $\mathcal{C}$ and of $k$-valued fibre functors on $\langle X \rangle_{\otimes}$.
\end{theorem}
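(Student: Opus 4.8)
The assignment $\mathcal{P}\mapsto\omega_{\mathcal{P}}$ is already well-defined on isomorphism classes by Proposition~\ref{proposition-p-v-ring-induces-fibre-functor}, so the plan is to construct an inverse. Concretely, it suffices to (a) attach to each $k$-valued fibre functor $\omega$ on $\langle X\rangle_{\otimes}$ a Picard--Vessiot ring $\mathcal{P}_\omega$ with $\omega_{\mathcal{P}_\omega}\cong\omega$, and (b) show $\mathcal{P}_{\omega_{\mathcal{P}}}\cong\mathcal{P}$ for every Picard--Vessiot ring $\mathcal{P}$; injectivity of $\mathcal{P}\mapsto\omega_{\mathcal{P}}$ is then formal. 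For (a) the idea is to carry out Deligne's ``torsor of trivialisations'' construction inside $\mathcal{C}$. Since $\langle X\rangle_{\otimes}$ is rigid $k$-linear abelian, $\omega$ makes it a neutral Tannakian category over $k$, hence tensor-equivalent to $\Rep_k(G)$ with $G=\underline{\Aut}^{\otimes}(\omega)$ the Galois group scheme, via an equivalence carrying $\omega$ to the forgetful functor. Extend the composite $\Rep_k(G)\xrightarrow{\sim}\langle X\rangle_{\otimes}\hookrightarrow\mathcal{C}$ to a colimit-preserving functor $U\colon\Ind(\Rep_k(G))\to\mathcal{C}$; as $\mathcal{C}$ is cocomplete with tensor product commuting with colimits, $U$ is a tensor functor, so it sends the commutative ring object $\mathcal{O}(G)$ of $\Ind(\Rep_k(G))$ — the regular representation — to a commutative ring object $\mathcal{P}_\omega:=U(\mathcal{O}(G))$ of $\mathcal{C}$.

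I would then check the axioms of Definition~\ref{definition-p-v-ring} for $\mathcal{P}_\omega$. Solvability (axiom~(3)) is immediate: if $Y\in\langle X\rangle_{\otimes}$ corresponds to the $G$-representation $V$, the standard untwisting isomorphism for the regular corepresentation, $\mathcal{O}(G)\otimes_k V\cong\mathcal{O}(G)\otimes_k V^{\mathrm{triv}}$ (where $V^{\mathrm{triv}}$ carries the trivial action), together with monoidality of $U$ and the fact that $U$ sends trivial representations to trivial objects, yields $\mathcal{P}_\omega\otimes Y\cong\mathcal{P}_\omega\otimes\tau(\omega(Y))$ in $\Mod_{\mathcal{C}}(\mathcal{P}_\omega)$, so $\mathcal{P}_\omega\otimes Y$ is a trivial $\mathcal{P}_\omega$-module. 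Applying $\Hom_{\mathcal{P}_\omega}(\mathcal{P}_\omega,-)$ to this gives $\omega_{\mathcal{P}_\omega}(Y)\cong\End_{\mathcal{P}_\omega}(\mathcal{P}_\omega)\otimes_k\omega(Y)$, naturally and compatibly with tensor products, so $\omega_{\mathcal{P}_\omega}\cong\omega$ once axiom~(2) is known. For axiom~(2) one identifies $\End_{\mathcal{P}_\omega}(\mathcal{P}_\omega)=\Hom_{\mathcal{C}}(1,\mathcal{P}_\omega)$ — the internal form of the ``no new constants'' statement — with the left-regular invariants $\mathcal{O}(G)^{G}$, which equal $k$; this follows from the description $\mathcal{P}_\omega=U(\mathcal{O}(G))$, or is extracted via the pointing from the corresponding fact about the torsor below.

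For axiom~(1), $\mathcal{P}_\omega$ is flat because it is a filtered colimit along monomorphisms of dualizable, hence flat, objects; faithful flatness is deduced from the pointing $\vartheta$, under which $\mathcal{P}_\omega$ becomes the coordinate ring of $\underline{\Isom}^{\otimes}(\omega,\vartheta|_{\langle X\rangle_{\otimes}})$ — a $G$-torsor over $\Spec R$, in particular faithfully flat over $R$ — since $\vartheta$ is faithful exact and reflects faithful flatness. For simpleness, an ``ideal'' $\mathcal{I}\subset\mathcal{P}_\omega$ in $\mathcal{C}$ is the filtered union of the subobjects $\mathcal{I}\cap U(V)$ over finite-dimensional subrepresentations $V\subset\mathcal{O}(G)$, and each $\mathcal{I}\cap U(V)$ lies in $\langle X\rangle_{\otimes}$ (closed under subobjects); so $\mathcal{I}$ corresponds to a $G$-stable ideal of $\mathcal{O}(G)$, necessarily $0$ or $\mathcal{O}(G)$ since $G$ acting on itself by translation is a trivial torsor. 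Thus $\mathcal{P}_\omega$ has no nontrivial ideal, which, together with simpleness of $1$, forces every nonzero morphism from $1$ to a $\mathcal{P}_\omega$-module to be a monomorphism. Minimality (axiom~(4)) follows because $\mathcal{O}(G)$ is generated as a $k$-algebra by the coefficient spaces of one faithful $G$-representation and its dual, so $\mathcal{P}_\omega$ is generated as a ring object by the corresponding dualizable objects of $\langle X\rangle_{\otimes}$; any ring homomorphism $\mathcal{P}'\to\mathcal{P}_\omega$ from a ring satisfying (1)--(3) is a monomorphism (as $\mathcal{P}'$ is simple) and induces isomorphisms on the solution spaces $\omega_{\mathcal{P}'}(Y)\to\omega_{\mathcal{P}_\omega}(Y)$ (injections between spaces of equal dimension), hence surjects onto the generators and is an isomorphism. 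Finally, for (b), the solution spaces $\omega_{\mathcal{P}}(Y)\cong\Hom_{\mathcal{C}}(Y^{\vee},\mathcal{P})$ of Remark~\ref{rewrite} assemble into a canonical ring homomorphism $\mathcal{P}_{\omega_{\mathcal{P}}}\to\mathcal{P}$; its source satisfies (1)--(3) and $\mathcal{P}$ is minimal, so it is an isomorphism, and injectivity of $\mathcal{P}\mapsto\omega_{\mathcal{P}}$ follows at once.

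The step I expect to cost the most effort is axiom~(1) for $\mathcal{P}_\omega$ — that it is a \emph{simple, faithfully flat} ring object — since this is where one must turn the geometric facts that a $G$-torsor is faithfully flat and has no proper nonzero invariant ideal into internal statements, and reconcile the absence of ideals with the definition's notion of simpleness; the ``no new constants'' identity $\End_{\mathcal{P}_\omega}(\mathcal{P}_\omega)=k$ (axiom~(2)) is a related delicate point. Both hinge on running the torsor-of-trivialisations argument entirely inside $\mathcal{C}$, which requires care with the existence and exactness of filtered colimits in $\mathcal{C}$, with the compatibility of the pointing $\vartheta$ with tensor products, colimits and module objects, and with the identification of $\vartheta(\mathcal{P}_\omega)$ with the coordinate ring of $\underline{\Isom}^{\otimes}(\omega,\vartheta|_{\langle X\rangle_{\otimes}})$.
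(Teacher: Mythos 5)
Your proposal is correct and follows essentially the same route as the paper: Tannakian duality identifies $\langle X\rangle_\otimes$ with $\Repf_k(G)$, the regular representation $\mathcal{O}(G)$ is transported into $\mathcal{C}$ through the Ind-category, the Picard--Vessiot axioms are verified via the untwisting isomorphism, the torsor theorem for faithful flatness, and the solution-space argument for minimality. The only organizational difference is that the paper isolates the axiom-checking in $\Rep_k(G)$ itself (Proposition \ref{proposition-regular-rep-is-p-v-ring}) and then does the real work of showing that $\Ind\langle X\rangle_\otimes$ embeds fully faithfully and exactly into $\mathcal{C}$ (Lemma \ref{indlem} plus the noetherian compactness argument for $\varinjlim\Hom \cong \Hom\varinjlim$) --- precisely the point you correctly flag as the delicate one but leave unverified.
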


For the proof of the theorem we first examine the case of representation categories.

\begin{proposition}\label{proposition-regular-rep-is-p-v-ring}
Let $G$ be an affine group scheme of finite type over a field $k$. Consider the tensor category $\Rep_k(G)$ of  $k$-representations of $G$ viewed as a pointed tensor category via the forgetful functor $\omega$.

\par The regular representation $\mathcal{O}(G)$ is a Picard-Vessiot ring for the full subcategory $\Repf_k(G)$ of finite-dimensional representations in $\Rep_k(G)$, and the associated fibre functor is isomorphic to the restriction of $\omega$ to  $\Repf_k(G)$.
\par Conversely, every Picard-Vessiot ring for $\Repf_k(G)$ in $\Rep_k(G)$ with this property is isomorphic to the regular representation $\mathcal{O}(G)$.
\end{proposition}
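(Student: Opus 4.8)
The plan is to work inside $\Rep_k(G)$, regarded as a pointed tensor category via the forgetful functor $\omega$, so that its commutative ring objects are precisely the commutative $G$-algebras over $k$, and to run everything through the fundamental theorem on Hopf modules. The point to set up first is that a module object over $\mathcal{O}(G)$ (with $G$ acting on $\mathcal{O}(G)$ by translation) is the same thing as an $\mathcal{O}(G)$-Hopf module, and that the fundamental theorem identifies $\Mod_{\mathcal{C}}(\mathcal{O}(G))$ with $\Vecc(k)$, a quasi-inverse sending $V$ to $V\otimes_k\mathcal{O}(G)$ with $G$ acting only on the second factor. Equivalently, for every $X\in\Rep_k(G)$ the ``shearing'' isomorphism $\mathcal{O}(G)\otimes X\cong\mathcal{O}(G)\otimes_k\omega(X)$ exhibits $\mathcal{O}(G)\otimes X$ as a trivial $\mathcal{O}(G)$-module, and a direct check shows this is an isomorphism of $\mathcal{O}(G)$-algebras when $X$ is a ring object.

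For the forward direction I would argue as follows. By the shearing isomorphism every object of $\Rep_k(G)$, in particular every object of $\Repf_k(G)$, is solvable in $\mathcal{O}(G)$; moreover the composite of base change by $\mathcal{O}(G)$ with the equivalence $\Mod_{\mathcal{C}}(\mathcal{O}(G))\simeq\Vecc(k)$ is, as a tensor functor, isomorphic to $\omega$, so $\mathcal{O}(G)\otimes-$ is faithful exact --- i.e.\ $\mathcal{O}(G)$ is faithfully flat --- and the associated fibre functor $\omega_{\mathcal{O}(G)}$ is isomorphic to $\omega|_{\Repf_k(G)}$. Under the equivalence the unit object $\mathcal{O}(G)$ corresponds to the one-dimensional space $k$, whose only subobjects are $0$ and $k$; hence $\mathcal{O}(G)$ is a simple ring object with $\End_{\mathcal{O}(G)}(\mathcal{O}(G))=k$, which is properties (1) and (2) of Definition \ref{definition-p-v-ring}. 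For minimality (4), let $\mathcal{Q}$ be a ring object satisfying (1)--(3) together with a ring homomorphism $\mathcal{Q}\to\mathcal{O}(G)$; since $\mathcal{Q}$ is simple and the homomorphism is unital, its kernel --- a submodule object of $\mathcal{Q}$ other than $\mathcal{Q}$ --- vanishes, so $\mathcal{Q}$ is a $G$-stable subalgebra of $\mathcal{O}(G)$. By the argument proving Proposition \ref{proposition-p-v-ring-induces-fibre-functor}, which uses only properties (1)--(3), $\omega_{\mathcal{Q}}$ is a faithful exact tensor functor $\Repf_k(G)\to\Vecf(k)$ and hence preserves dimensions, so by Remark \ref{rewrite} one has $\dim_k\Hom_{\mathcal{C}}(V,\mathcal{Q})=\dim_k\omega_{\mathcal{Q}}(V^{\vee})=\dim_k V$ for every $V\in\Repf_k(G)$; on the other hand $\mathcal{O}(G)$ is the cofree $G$-comodule, so $\dim_k\Hom_{\mathcal{C}}(V,\mathcal{O}(G))=\dim_k V$ as well. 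Therefore the monomorphism $\Hom_{\mathcal{C}}(V,\mathcal{Q})\hookrightarrow\Hom_{\mathcal{C}}(V,\mathcal{O}(G))$ induced by $\mathcal{Q}\hookrightarrow\mathcal{O}(G)$ is bijective for every finite-dimensional $V$; letting $V$ range over the finite-dimensional subrepresentations of $\mathcal{O}(G)$ shows that each of them already lies in $\mathcal{Q}$, and since $\mathcal{O}(G)$ is their union we get $\mathcal{Q}=\mathcal{O}(G)$.

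For the converse, let $\mathcal{P}$ be a Picard--Vessiot ring for $\Repf_k(G)$ in $\Rep_k(G)$ with $\omega_{\mathcal{P}}\cong\omega|_{\Repf_k(G)}$; I would produce a homomorphism of ring objects $\mathcal{P}\to\mathcal{O}(G)$ and then conclude by the minimality of $\mathcal{O}(G)$ just proved. To this end, compute the ring object $\mathcal{R}:=\mathcal{O}(G)\otimes\mathcal{P}$ in two ways. Viewed through $\mathcal{O}(G)\to\mathcal{R}$ it is a trivial $\mathcal{O}(G)$-module, hence isomorphic as an $\mathcal{O}(G)$-algebra to $\mathcal{O}(G)\otimes_k\omega(\mathcal{P})$ with $G$ acting only on the first factor; viewed through $\mathcal{P}\to\mathcal{R}$ it is a trivial $\mathcal{P}$-module (here one uses that every object of $\Repf_k(G)$, hence also the filtered colimit $\mathcal{O}(G)$, is solvable in $\mathcal{P}$ by Corollary \ref{solv}), hence isomorphic as a $\mathcal{P}$-algebra to $\mathcal{P}\otimes_k\omega_{\mathcal{P}}(\mathcal{O}(G))$ with $G$ acting only on the first factor, and $\omega_{\mathcal{P}}(\mathcal{O}(G))\cong\omega(\mathcal{O}(G))=\mathcal{O}(G)$ as $k$-algebras by the hypothesis $\omega_{\mathcal{P}}\cong\omega$ (extended over the filtered colimit, which is legitimate as the unit object is compact). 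Applying $\Hom_{\mathcal{C}}(1,-)$ and using $\mathcal{O}(G)^{\nabla}=k$ and $\mathcal{P}^{\nabla}=\End_{\mathcal{P}}(\mathcal{P})=k$ (property (2)), the first description gives $\mathcal{R}^{\nabla}\cong\omega(\mathcal{P})$ and the second gives $\mathcal{R}^{\nabla}\cong\mathcal{O}(G)$, whence $\omega(\mathcal{P})\cong\mathcal{O}(G)$ as $k$-algebras. In particular $\omega(\mathcal{P})$ admits a $k$-algebra homomorphism $\lambda\colon\omega(\mathcal{P})\to k$ (corresponding to the identity section of $G$); composing $\mathcal{P}\to\mathcal{R}$, $p\mapsto 1\otimes p$, with the isomorphism $\mathcal{R}\cong\mathcal{O}(G)\otimes_k\omega(\mathcal{P})$ and with $\mathrm{id}_{\mathcal{O}(G)}\otimes\lambda$ yields a homomorphism of ring objects $\mathcal{P}\to\mathcal{O}(G)$ --- each step is a ring homomorphism and is $G$-equivariant, the latter because the constant function $1\in\mathcal{O}(G)$ is $G$-fixed and $\lambda$ is defined on a trivial $G$-module. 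Since $\mathcal{P}$ satisfies (1)--(3) and $\mathcal{O}(G)$ is a Picard--Vessiot ring, minimality of $\mathcal{O}(G)$ forces this homomorphism to be an isomorphism, so $\mathcal{P}\cong\mathcal{O}(G)$.

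I expect the main obstacle to be the converse, and within it the double computation of $\mathcal{R}=\mathcal{O}(G)\otimes\mathcal{P}$: one must verify that both trivializations are isomorphisms of \emph{ring} objects and carefully track which tensor factor carries the $G$-action, so that passing to invariants genuinely yields $\omega(\mathcal{P})\cong\mathcal{O}(G)$. The conceptual content of this step is that an abstract isomorphism of fibre functors must be upgraded to an honest morphism of ring objects before the minimality criterion can be applied, and the computation with $\mathcal{R}$ is a concrete way to perform this upgrade without invoking the Tannakian reconstruction result (Theorem \ref{pvfibfun}), which in this paper rests on the present proposition. By contrast, the minimality step in the forward direction is an easy consequence of the cofreeness of $\mathcal{O}(G)$ together with the dimension-preservation of a fibre functor.
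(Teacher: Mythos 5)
Your proof is correct in outline but organizes both halves differently from the paper, and one step needs repair. In the forward direction the paper argues geometrically (the $G$-scheme $G$ has no proper nonempty closed $G$-stable subschemes; the explicit shearing isomorphism ${}_GG\times_k\mathbb{V}_\rho\cong{}_GG\times_k\mathbb{V}_\tau$ gives solvability), whereas you invoke the fundamental theorem of Hopf modules to identify $\Mod_{\mathcal{C}}(\mathcal{O}(G))$ with $\Vecc(k)$ outright; this is the same mathematics, and your packaging is arguably cleaner since simplicity, $\End_{\mathcal{O}(G)}(\mathcal{O}(G))=k$, faithful flatness, solvability and $\omega_{\mathcal{O}(G)}\cong\omega$ all fall out of the single equivalence. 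For minimality, however, the paper applies \cite{delignemilne}, Proposition 1.13 to the morphism of tensor functors $\omega_{\mathcal{Q}}\to\omega$ induced by $\mathcal{Q}\hookrightarrow\mathcal{O}(G)$ and gets bijectivity of $\Hom_{G}(W,\mathcal{Q})\to\Hom_{G}(W,\mathcal{O}(G))$ directly, while you compare dimensions; the assertion that a faithful exact tensor functor into $\Vecf(k)$ ``preserves dimensions'' is not a formal consequence of those adjectives in positive characteristic (the categorical rank only determines $\dim$ modulo $\mathrm{char}\,k$), and the standard proof for fibre functors goes through the torsor theorem. Since you already have injectivity, you should simply replace the dimension count by the rigidity argument. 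Your converse is genuinely different and runs in the opposite direction: the paper transports the inclusions $W_i\hookrightarrow\mathcal{O}(G)$ through $\omega_{\mathcal{P}}\cong\omega$ to build a homomorphism $\mathcal{O}(G)\to\mathcal{P}$ and concludes by minimality of $\mathcal{P}$, whereas you trivialize $\mathcal{O}(G)\otimes\mathcal{P}$ over each tensor factor to obtain $\omega(\mathcal{P})\cong\mathcal{O}(G)$, use the counit to produce $\mathcal{P}\to\mathcal{O}(G)$, and conclude by minimality of $\mathcal{O}(G)$. Both are valid; yours is in effect the torsor argument of \cite{delignemilne}, Theorem 3.2 made explicit in this special case, at the cost of the extra bookkeeping you acknowledge, while the paper's route is shorter because the hypothesis $\omega_{\mathcal{P}}\cong\omega$ hands it the required morphisms of representations directly.
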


\begin{proof}
The regular representation $\mathcal{O}(G)$ is a faithfully flat ring in $\Rep_k(G)$ since so is its image under the forgetful functor in $\Vecc(k)$. It is moreover a simple ring (indeed, the scheme $G$ equipped with its canonical $G$-action has no nontrivial proper closed $G$-subsets). The elements of the endomorphism ring of the regular $G$-representation $\mathcal{O}(G)$ can be identified with the $G$-invariant regular functions on $G$ and hence they are just the constants. This shows properties (1)-(2) of a Picard-Vessiot ring.

\par Let now $V$ be a finite dimensional representation, and denote by $V_\tau$ the underlying vector space of $V$ viewed as a trivial $G$-representation. Consider the associated vector bundles $\mathbb{V}=\Spec(\Sym^*(V^{\vee}))$ and $\mathbb{V_\tau}=\Spec(\Sym^*(V^{\vee}_\tau))$. Solvability of $V$ in $\mathcal{O}(G)$ is equivalent to the existence of a $G$-equivariant isomorphism of $G$-schemes
\begin{equation}\label{Vtriv}
_GG \times_k \mathbb{V}_{\rho} \to {}_GG \times_k \mathbb{V}_{\tau},
\end{equation}
where $_GG$ is the affine $G$-scheme associated with $\mathcal{O}(G)$.
Such an isomorphism is given on scheme-theoretic points by $(g,v) \mapsto (g,g^{-1}v)$, whence property (3).

\par Lastly, we have to show that the regular representation satisfies property (4). Let $\mathcal{P}$ be a ring in $\Rep_k(G)$ having the necessary properties and let $\lambda:\,\mathcal{P} \to \mathcal{O}(G)$ be a ring homomorphism in $\Rep_k(G)$. Since $\mathcal{P}$ is a simple ring, this homomorphism is injective, hence we only have to show that it is surjective. To see this, note that $\lambda$ induces a morphism
\[
\lambda^*:\,\omega_{\mathcal{P}} \cong \Hom_{G}(k, \mathcal{P} \otimes_k -) \to \Hom_{G}(k, \mathcal{O}(G) \otimes_k  -) \cong \omega
\]
between the associated fibre functors. But $\Repf_k(G)$ is a rigid tensor category, hence this morphism is in fact an isomorphism of tensor functors (see \cite{delignemilne}, Proposition 1.13). Consider a finite dimensional subrepresentation  $W\subset\mathcal{O}_G$, and substitute its dual $W^\vee$ in $\lambda^*$. As in Remark \ref{rewrite}, we may rewrite the result as an isomorphism
\[
\Hom_{G}(W, \mathcal{P}) \stackrel\sim\to \Hom_{G}(W, \mathcal{O}(G)).
\]
Consequently, the embedding $W \hookrightarrow \mathcal{O}(G)$ factors through the embedding $\mathcal{P}\hookrightarrow \mathcal{O}(G)$. As this holds for every $W$, we are done.

\par To see that the associated fibre functor of $\mathcal{O}(G)$ is the forgetful functor $\omega$, it suffices by construction  to take $G$-invariant elements in the isomorphism
\[
V_{\rho} \otimes_k \mathcal{O}(G) \cong V_{\tau} \otimes \mathcal{O}(G)
\]
deduced from (\ref{Vtriv}). Conversely, let $\mathcal{P}$ be a Picard-Vessiot ring for $\Repf_k(G)$ in $\Rep_k(G)$ whose associated fibre functor is $\omega$. We show that there exists a homomorphism $\mathcal{O}(G) \to \mathcal{P}$; by property (4) of Picard--Vessiot rings it must then be an isomorphism. As in the proof of property (4) for $\mathcal{O}(G)$ above, we can write $\mathcal{O}(G)$ as a colimit $\varinjlim W_i$ of finite-dimensional subrepresentations $W_i$, and deduce from the isomorphism of tensor functors $\omega_{\mathcal P}\cong\omega$ a compatible system of morphisms $W_i \to \mathcal{P}$. These assemble to the required homomorphism $\mathcal{O}(G) \to \mathcal{P}$.
\end{proof}

For the proof of Theorem \ref{pvfibfun} we also need a lemma concerning Ind-categories of tensor categories; we use (\cite{MR2182076}, Chapter 6) as our basic reference on this topic. Recall first from (\cite{MR2182076}, Corollary 6.3.2) that given a category $\mathcal C$ admitting small filtered colimits and a functor $F:\, {\mathcal T}\to \mathcal C$ from another category $\mathcal T$, the functor $F$ has a unique extension $JF:\, \Ind({\mathcal T})\to \mathcal C$.

\begin{lemma}\label{indlem} Let $\mathcal T$ be an abelian tensor category.
\begin{enumerate}
\item The category $\Ind({\mathcal T})$ is again an abelian tensor category.
\item Assume given a fully faithful exact tensor functor $F:\,\mathcal T\to \mathcal C$, where  $\mathcal C$ is an abelian tensor category in which small filtered colimits are exact. If for all objects $P$ of $\mathcal T$ the functor $\Hom_{\mathcal C}(F(P),\_\_)$ commutes with small filtered colimits of objects of the form $F(T)$ with $T\in {\mathcal T}$,  then the extension $JF$ is again a fully faithful and exact tensor functor. Consequently, it realizes $\Ind({\mathcal T})$ as a fully abelian tensor subcategory of $\mathcal C$.
\end{enumerate}
\end{lemma}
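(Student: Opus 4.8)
The plan is to deduce everything from the standard structure theory of Ind-categories (\cite{MR2182076}, Chapter 6) — in particular the existence of \emph{level representations} of morphisms — together, for part (2), with the stated commutation property of $\Hom_{\mathcal C}(F(P),-)$; throughout I write $\varinjlim$ for a (formal) colimit taken in $\Ind(\mathcal T)$. For part (1): by construction $\Ind(\mathcal T)$ has all small filtered colimits, and since $\mathcal T$ is abelian it has all finite limits and colimits. Concretely, given $\varphi\colon A\to B$ in $\Ind(\mathcal T)$, after replacing the index categories by a common cofinal filtered category $I$ one may write $A=\varinjlim_{i\in I}X_i$, $B=\varinjlim_{i\in I}Y_i$ and $\varphi=\varinjlim_i\varphi_i$ with $\varphi_i\colon X_i\to Y_i$ in $\mathcal T$, and then $\ker\varphi=\varinjlim_i\ker\varphi_i$ and $\coker\varphi=\varinjlim_i\coker\varphi_i$; that these satisfy the abelian-category axioms, and that filtered colimits in $\Ind(\mathcal T)$ are exact, is checked termwise (standard, cf. \cite{MR2182076}, Chapter 6; Theorem 8.6.5). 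The symmetric monoidal structure extends uniquely so as to commute with filtered colimits in each variable, via $(\varinjlim_iX_i)\otimes(\varinjlim_jY_j):=\varinjlim_{(i,j)}(X_i\otimes Y_j)$; this is independent of the chosen presentations up to canonical isomorphism, is $k$-bilinear, and inherits the associativity, symmetry and unit constraints from $\mathcal T$ by passage to colimits. Finally $\otimes$ on $\Ind(\mathcal T)$ commutes with all small colimits: with filtered ones by construction, and with finite ones by the level representation together with additivity and right exactness of $\otimes$ on $\mathcal T$. Hence $\Ind(\mathcal T)$ is an abelian tensor category.

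For part (2), $JF$ preserves small filtered colimits by its construction (\cite{MR2182076}, Corollary 6.3.2). It is a tensor functor: for $A=\varinjlim_iX_i$ and $B=\varinjlim_jY_j$,
\[
JF(A\otimes B)=JF\bigl(\varinjlim_{(i,j)}X_i\otimes Y_j\bigr)=\varinjlim_{(i,j)}F(X_i\otimes Y_j)\cong\varinjlim_{(i,j)}F(X_i)\otimes F(Y_j)\cong JF(A)\otimes JF(B),
\]
where the first isomorphism is the tensor constraint of $F$ applied termwise and the second uses that $\otimes$ in $\mathcal C$ commutes with filtered colimits in each variable; moreover $JF$ sends the unit $F(1_{\mathcal T})\cong 1_{\mathcal C}$. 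For exactness, with $\varphi=\varinjlim_i\varphi_i$ a level representation as above,
\[
JF(\ker\varphi)=\varinjlim_iF(\ker\varphi_i)=\varinjlim_i\ker F(\varphi_i)=\ker\bigl(\varinjlim_iF(\varphi_i)\bigr)=\ker JF(\varphi),
\]
the second equality because $F$ is exact on $\mathcal T$ and the third because filtered colimits are exact in $\mathcal C$; the same computation handles cokernels, and $JF$ is additive, so $JF$ is exact.

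It remains to prove that $JF$ is fully faithful, which is the one nontrivial point. For $A=\varinjlim_iX_i$ and $B=\varinjlim_jY_j$, using that $\Hom_{\mathcal C}(-,Z)$ carries colimits to limits and that $JF$ commutes with filtered colimits, one obtains
\[
\Hom_{\mathcal C}(JF(A),JF(B))\cong\varprojlim_i\Hom_{\mathcal C}\bigl(F(X_i),\varinjlim_jF(Y_j)\bigr)\cong\varprojlim_i\varinjlim_j\Hom_{\mathcal C}(F(X_i),F(Y_j))\cong\varprojlim_i\varinjlim_j\Hom_{\mathcal T}(X_i,Y_j),
\]
where the middle isomorphism is exactly the hypothesis that $\Hom_{\mathcal C}(F(X_i),-)$ commutes with the small filtered colimit $\varinjlim_jF(Y_j)$ of objects of the form $F(T)$, and the last one is full faithfulness of $F$. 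The right-hand side equals $\Hom_{\Ind(\mathcal T)}(A,B)$ by the definition of morphisms in an Ind-category, and one verifies that the composite bijection is the map induced by $JF$; thus $JF$ is fully faithful. Being a fully faithful exact tensor functor, $JF$ identifies $\Ind(\mathcal T)$ with a full subcategory of $\mathcal C$ that is closed under tensor products and contains the unit, and on which kernels and cokernels are computed as in $\mathcal C$ — i.e. with a fully abelian tensor subcategory — which is the final assertion. The main obstacle is precisely this full-faithfulness step: one has to make sure the colimit appearing there is of the exact shape covered by the hypothesis on $\Hom_{\mathcal C}(F(P),-)$ and that the resulting identification is the canonical one; once level representations are available, the abelianness and exactness claims are purely formal.
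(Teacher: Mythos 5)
Your proof is correct and follows essentially the same route as the paper: abelianness of $\Ind(\mathcal T)$ and termwise computation of (co)kernels via Kashiwara--Schapira, extension of the tensor structure by bifunctoriality in filtered colimits, exactness of $JF$ from exactness of filtered colimits in $\mathcal C$, and full faithfulness via the $\varprojlim_i\varinjlim_j$ computation of Hom-sets. The paper simply compresses these steps into citations (notably \cite{MR2182076}, Theorem 8.6.5 and Proposition 6.3.4, whose proof is exactly your full-faithfulness calculation), whereas you spell them out.
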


\begin{proof}
For statement (1), recall that by (\cite{MR2182076}, Theorem 8.6.5) the category $\Ind({\mathcal T})$ is abelian and admits small colimits. The tensor structure extends naturally to the Ind-category, and the extension $JF$ of the functor $F$ in (2) is again a tensor functor which is moreover exact by exactness of small filtered colimits in $\mathcal C$. Its fully faithfulness results from (\cite{MR2182076}, Proposition 6.3.4) and its proof, or \cite{MR0354652}, expos\'e I, Proposition 8.7.5 a).
\end{proof}

\begin{proof}[Proof of Theorem \ref{pvfibfun}] By Proposition \ref{proposition-p-v-ring-induces-fibre-functor} a Picard--Vessiot ring for $\langle X \rangle_{\otimes}$ induces a neutral fibre functor. Conversely, assume there exists such a neutral fibre functor $\omega$ on $\langle X \rangle_{\otimes}$. By the main theorem of neutral Tannakian categories (\cite{delignemilne}, Theorem 2.11) we have an equivalence of tensor categories $\langle X \rangle_{\otimes}\cong \Repf_k(G)$ for the associated Tannakian fundamental group scheme $G$, with $\omega$ inducing the forgetful functor on $\Repf_k(G)$. Moreover, by Lemma \ref{indlem} (1) the Ind-category $\Ind\langle X \rangle_{\otimes}$ is again an abelian tensor  category, and the above equivalence extends to an equivalence of abelian tensor categories $\Ind\langle X \rangle_{\otimes}\cong \Rep_k(G)$. Under this equivalence the unique extension of $\omega$ to $\Ind\langle X \rangle_{\otimes}$ corresponds to the forgetful functor on $\Rep_k(G)$ and is therefore a
faithful exact tensor functor.

We now show that $\Ind\langle X \rangle_{\otimes}$ embeds in $\mathcal C$ as a fully abelian tensor subcategory. For this we check that the assumptions of Lemma \ref{indlem} (2) are satisfied by the inclusion functor $\langle X \rangle_{\otimes}\to \mathcal C$. Firstly, small filtered colimits are exact in $\mathcal C$ as $\mathcal C$ is pointed by a faithful exact tensor functor $\vartheta:\,{\mathcal C}\to\Mod_R$ and they are exact in $\Mod_R$ (recall that we assumed that tensor functors commute with small colimits). Next, we verify that the map $\varinjlim \Hom_{\mathcal C}(P, X_i) \to \Hom_{\mathcal C}(P, \varinjlim X_i)$ is an isomorphism for $P$ and $X_i$ in $\langle X \rangle_{\otimes}$. For injectivity, we adapt the proof of (\cite{MR1012168}, Lemma 4.2.1 (ii)). For fixed $i$ the subobjects ${\rm Ker}(X_i\to X_j)$ for $j>i$ form an increasing system which must stabilize as $X_i\in \langle X \rangle_{\otimes}$ is noetherian. If $K_i$ is the common value, then the $X_i/K_i$ form an inductive system of subobjects of $X:=\varinjlim X_i$ whose colimit is still $X$. If a morphism $P\to X_i$ gives 0 when composed with $X_i\to X$, it must thus give 0 when composed with $X_i\to X_i/K_i$. But $X_i/K_i$ injects in $X_j$ for $j$ large enough, and we are done. For surjectivity, let $\phi:\, P\to X$ be a morphism, and denote by $Z$ its image. As in the proof of (\cite{MR1012168}, Lemma 4.2.2) we see that $Z\subset X_i/K_i$ for $i$ large enough, so as before $\phi$ comes from a morphism $P\to X_j$ for $j$ large enough.

\par We may thus apply the lemma and embed $\Ind\langle X \rangle_{\otimes}$ in $\mathcal C$ as claimed. Using Proposition \ref{proposition-regular-rep-is-p-v-ring} we then find a Picard-Vessiot ring $\mathcal{P}_{\omega}$ in $\mathcal{C}$ corresponding to $\omega$.
By construction, it satisfies all the required properties of Definition \ref{definition-p-v-ring}. Of these, only faithful flatness {\em in ${\mathcal C}$} requires further justification. It suffices to show that $\vartheta(P)$ is faithfully flat in $\Mod_R$, which in turn follows from \cite{delignemilne}, Theorem 3.2 (and its proof).
\end{proof}

\begin{corollary}\label{aut-galois}
The functor of automorphisms $\underline{\Aut}(\mathcal{P}{\mid}1)$ of a Picard--Vessiot ring $\mathcal P$ is representable by the Galois group scheme $G=\underline{\Aut}^\otimes(\omega_{\mathcal{P}})$.
\end{corollary}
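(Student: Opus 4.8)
The plan is to construct a natural transformation $\Phi\colon\underline{\Aut}(\mathcal{P}{\mid}1)\to G$ from the automorphism functor to the Galois group scheme, and then to check it is an isomorphism of group functors by passing to the representation-theoretic model of Proposition~\ref{proposition-regular-rep-is-p-v-ring}. For the first step I would use the reinterpretation of $\omega_{\mathcal P}$ in Remark~\ref{rewrite}. Given a $k$-algebra $R$ and an automorphism $g$ of the ring object $\mathcal{P}\otimes_k R$ over $\tau(R)$, postcomposition with $g\otimes\id_Y$ gives, for each $Y\in\langle X\rangle_\otimes$, an automorphism of $\omega_{\mathcal P}(Y)\otimes_k R\cong\Hom_{\mathcal C}(1,\mathcal{P}\otimes Y)\otimes_k R$, natural in $Y$. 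Since $g$ is a homomorphism of ring objects, i.e. commutes with the multiplication $\mathcal{P}\otimes\mathcal{P}\to\mathcal{P}$, this family is compatible with tensor products, hence is a $\otimes$-automorphism of $\omega_{\mathcal P}\otimes_k R$, that is, a point of $G(R)$. This assignment is natural in $R$ and a group homomorphism, giving the desired morphism of group functors $\Phi$.

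It then remains to show $\Phi$ is an isomorphism, and here I would use the Tannakian model. By Theorem~\ref{pvfibfun} and its proof, up to isomorphism $\mathcal P$ is the image under the embedding $\Ind\langle X\rangle_\otimes\hookrightarrow\mathcal{C}$ of the regular representation $\mathcal{O}(G)$ under a $k$-linear tensor equivalence $\Ind\langle X\rangle_\otimes\cong\Rep_k(G)$ carrying $\mathcal P$ to $\mathcal{O}(G)$ as ring objects and $\tau(R)$ to the trivial representation on $R$. As all data entering the definition of $\underline{\Aut}(\mathcal{P}{\mid}1)$ are internal to the fully abelian tensor subcategory $\Ind\langle X\rangle_\otimes$, this equivalence identifies $\Phi$ with the corresponding map $\underline{\Aut}(\mathcal{O}(G){\mid}k)\to G$ inside $\Rep_k(G)$. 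Now an automorphism of $\mathcal{O}(G)\otimes_k R$ as a ring object over $R$ in $\Rep_k(G)$ is the same as a $G$-equivariant automorphism $\phi$ of the $R$-scheme $G_R$, where $G$ acts by translation; by the equivariance relation $\phi$ is determined by its value $\phi(e)\in G(R)$ on the identity section and is in fact the corresponding translation, and conversely every translation is such an automorphism. This exhibits $\underline{\Aut}(\mathcal{O}(G){\mid}k)(R)\cong G(R)$ naturally in $R$, and a direct computation shows this identification is inverse to $\Phi$. Hence $\Phi$ is an isomorphism and $\underline{\Aut}(\mathcal{P}{\mid}1)$ is representable by $G=\underline{\Aut}^\otimes(\omega_{\mathcal P})$.

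The main obstacle, as usual in arguments of this type, is the bookkeeping in the reduction step: one must verify that the functor $\underline{\Aut}(\mathcal{P}{\mid}1)$ — which involves base change by an arbitrary $k$-algebra $R$, the ring-object structure of $\mathcal P$, and compatibility with the unit $\tau(R)\to\mathcal{P}\otimes_k R$ — is computed entirely within $\Ind\langle X\rangle_\otimes$ and is transported faithfully by the tensor equivalence, so that the problem genuinely localizes to $\Rep_k(G)$; and one must keep track of which side the $G$-action translates on, so that the composite of $\Phi$ with the translation identification comes out to be the identity rather than inversion. Once these points are settled, what remains is the classical identification of an affine group scheme with the automorphism functor of its regular representation.
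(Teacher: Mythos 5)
Your proof is correct and follows essentially the route the paper intends: the corollary is stated without proof as a direct consequence of Theorem \ref{pvfibfun} (which identifies $\Ind\langle X\rangle_\otimes$ with $\Rep_k(G)$ carrying $\mathcal P$ to $\mathcal{O}(G)$ via Proposition \ref{proposition-regular-rep-is-p-v-ring}), and your argument simply supplies the standard details of that reduction together with the classical identification of $G$ with the equivariant automorphisms of its regular representation. Nothing to add.
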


In the case when the base field $k$ is algebraically closed, there is always a $k$-valued fibre functor on $\langle X \rangle_{\otimes}$ by \cite{MR1106898}, Corollaire 6.20. Thus the theorem implies:

\begin{corollary} If $k$ is algebraically closed, there exists a Picard--Vessiot ring for the subcategory $\langle X \rangle_{\otimes}$ in $\mathcal C$.
\end{corollary}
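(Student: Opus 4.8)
The plan is straightforward: this corollary is a direct consequence of combining Theorem \ref{pvfibfun} with the cited existence result of Deligne for fibre functors over algebraically closed fields. First I would recall the setup: $\mathcal{C}$ is a pointed tensor category and $X$ a dualizable object such that $\langle X\rangle_\otimes$ is a rigid $k$-linear abelian tensor subcategory of $\mathcal{C}$ (so that Theorem \ref{pvfibfun} applies). The category $\langle X\rangle_\otimes$ is then a rigid abelian $k$-linear tensor category with $\End(1)=k$, i.e. precisely the kind of category to which \cite{MR1106898}, Corollaire 6.20, applies.

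The key step is to invoke that result: since $k$ is algebraically closed, Corollaire 6.20 of \cite{MR1106898} guarantees the existence of a $k$-valued fibre functor $\omega$ on $\langle X\rangle_\otimes$. (One should check the mild hypotheses of that corollary, namely that $\langle X\rangle_\otimes$ is essentially small, which holds because every object is a subquotient of a finite direct sum of the $X^{\otimes i}\otimes(X^\vee)^{\otimes j}$.) Having produced such an $\omega$, Theorem \ref{pvfibfun} immediately yields a Picard--Vessiot ring $\mathcal{P}_\omega$ for $\langle X\rangle_\otimes$ in $\mathcal{C}$, namely the one whose associated fibre functor is $\omega$. This is exactly the assertion of the corollary.

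I expect there to be no real obstacle here: the work has all been done in Theorem \ref{pvfibfun} and in Deligne's paper, and the corollary is just the composition of these two facts. The only point requiring a moment's care is matching the precise hypotheses of \cite{MR1106898}, Corollaire 6.20 — which concerns rigid abelian tensor categories over a field satisfying suitable finiteness conditions — with the category $\langle X\rangle_\otimes$ as defined in the excerpt; but these are satisfied by construction. Since the statement is phrased as a corollary and the proof amounts to one sentence, I would simply write: ``This is immediate from Theorem \ref{pvfibfun} together with the existence of a $k$-valued fibre functor on $\langle X\rangle_\otimes$ provided by \cite{MR1106898}, Corollaire 6.20, when $k$ is algebraically closed.''
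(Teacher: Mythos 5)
Your proposal is correct and is exactly the paper's argument: the sentence preceding the corollary in the text invokes \cite{MR1106898}, Corollaire 6.20, to produce a $k$-valued fibre functor on $\langle X\rangle_\otimes$ when $k$ is algebraically closed, and Theorem \ref{pvfibfun} then yields the Picard--Vessiot ring. No discrepancy to report.
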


\begin{remark}\label{pvremas}\rm ${}$

\noindent (1) The general Tannakian theory (see e.g. \cite{delignemilne}, Theorem 3.2) tells us that the functor of isomorphisms $\underline{\Isom}^\otimes(\omega_{\mathcal{P}}\otimes_kR, \vartheta)$ is an affine $G$-torsor over $\Spec(R)$ and is represented by the spectrum of the faithfully flat $R$-algebra $\vartheta(P)$. The identity of $\vartheta(P)$ thus yields a canonical point of the torsor $\underline{\Isom}^\otimes(\omega_{\mathcal{P}}\otimes_kR, \vartheta)$.

\noindent (2) As Yves Andr\'e points out, one of the simplest situations where the above theory can be applied is the following. Consider the category ${\mathcal C}_0$ of triples $(V, W, \overline{\omega})$, where $V$ and $W$ are finite-dimensional $\Bbb Q$-vector spaces of the same dimension, and $\overline{\omega}$ is an isomorphism $V\otimes_{\Bbb Q}{\Bbb C}\stackrel\sim\to W\otimes_{\Bbb Q}{\Bbb C}$. This is a neutral Tannakian category over $\Bbb Q$ with fibre functor $\omega:\,(V, W, \overline{\omega})\mapsto W$. Its Ind-category $\mathcal C$ is equipped with a non-neutral fibre functor $\vartheta$ induced by $(V, W, \overline{\omega})\mapsto V$. We thus have a Picard--Vessiot theory for the restrictions of $\omega$ to subcategories of the form $\langle (V, W, \overline{\omega})\rangle_\otimes$ which in turn gives rise to a `motivic' theory in the following sense.

The category ${\mathcal C}_0$ is the target of the de Rham--Betti realization of motives modulo homological equivalence over $\Bbb Q$ (see \cite{andrebook}, 7.1.6 -- as explained there, one has to modify the commutativity constraint for the product on motives which involves a standard conjecture). The conjectured full faithfulness of the realization would imply that the motivic Galois group of a motive equals the Galois group scheme of its realization in ${\mathcal C}_0$.

\end{remark}

\section{Solution algebras}

As in the previous section, let $X$ be a dualizable object of the pointed tensor category $\mathcal{C}$. We assume that there exists a Picard--Vessiot ring $\mathcal P$ for $\langle X\rangle_\otimes$ in $\mathcal{C}$, and denote by $\omega:=\omega_{\mathcal{P}}$ the associated fibre functor.

Inspired by Andr\'e's definition of solution algebras for differential modules in \cite{MR3215927}, we put:

\begin{definition}\label{defi-solution-algebra}
A solution algebra for $\langle X \rangle_{\otimes}$ is a ring $\mathcal{S}$ in $\mathcal{C}$ such that
\begin{enumerate}
\item there exists an injective ring homomorphism $\iota \colon \mathcal{S} \to \mathcal{P}$ (i.e. this morphism is a monomorphism in $\mathcal{C}$),
\item there exists an object $Y$ of $\langle X \rangle_{\otimes}$ and a morphism $\sigma \colon Y \to \mathcal{S}$ in $\mathcal{C}$ such that the induced ring homomorphism $\Sym^*(Y) \to \mathcal{S}$ is surjective.
\end{enumerate}
\end{definition}

The equivalence of this definition with Andr\'e's in the case of differential modules in characteristic 0 will be proven in the more general context of ID-modules in Proposition \ref{proposition-sol-alg-equivalent-characterization}.

\begin{lemma} \hfill
\begin{enumerate}
\item Solution algebras are Ind-objects of $\langle X \rangle_{\otimes}$.
\item The extension of $\omega$ to the Ind-category $\Ind(\langle X \rangle_{\otimes})$ sends solution algebras to finitely generated $k$-algebras.
\item Given an embedding $\iota:\,\mathcal{S}\to \mathcal P$ as in the definition, the induced morphism $\Spec\omega(\mathcal{P})\to\Spec\omega(\mathcal{S})$ has schematically dense image.
\end{enumerate}
\end{lemma}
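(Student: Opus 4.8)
The plan is to prove the three assertions in order, deducing (2) and (3) more or less formally from (1) together with the exactness and tensor-compatibility of the extended fibre functor; statement (1) is the only one that demands genuine care.

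\textbf{Statement (1).} First I would observe that the Picard--Vessiot ring $\mathcal{P}$ is itself an Ind-object of $\langle X\rangle_\otimes$: by Theorem \ref{pvfibfun} and Proposition \ref{proposition-regular-rep-is-p-v-ring}, under the equivalence $\Ind\langle X\rangle_\otimes\cong\Rep_k(G)$ built in the proof of Theorem \ref{pvfibfun} the ring $\mathcal{P}$ corresponds to the regular representation $\mathcal{O}(G)$, which is the filtered colimit $\varinjlim_i W_i$ of its finite-dimensional subrepresentations. Transporting this back, $\mathcal{P}\cong\varinjlim_i W_i$ with each $W_i\in\langle X\rangle_\otimes$ a subobject of $\mathcal{P}$ in $\mathcal{C}$, the embedding $\Ind\langle X\rangle_\otimes\hookrightarrow\mathcal{C}$ being exact, fully faithful and colimit-preserving. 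Now given a solution algebra $\mathcal{S}$ with its embedding $\iota\colon\mathcal{S}\hookrightarrow\mathcal{P}$, I form the intersections $\mathcal{S}\cap W_i$ inside $\mathcal{P}$ (pullbacks in the abelian category $\mathcal{C}$). Each $\mathcal{S}\cap W_i$ is a subobject of $W_i$, hence lies in $\langle X\rangle_\otimes$, since that subcategory is closed under subobjects in $\mathcal{C}$ (a subobject of a subquotient is a subquotient). Because $\mathcal{C}$ is pointed, small filtered colimits are exact in $\mathcal{C}$, as recalled in the proof of Theorem \ref{pvfibfun}; so they commute with the finite intersection and $\varinjlim_i(\mathcal{S}\cap W_i)=\mathcal{S}\cap\varinjlim_i W_i=\mathcal{S}$. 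Thus $\mathcal{S}$ is a small filtered colimit of objects of $\langle X\rangle_\otimes$, i.e. an Ind-object. (Alternatively one can argue from the surjection $\Sym^*(Y)\twoheadrightarrow\mathcal{S}$ by first checking that its kernel is a subobject of the Ind-object $\Sym^*(Y)$, hence an Ind-object, and then invoking exactness of the embedding.)

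\textbf{Statements (2) and (3).} Under $\Ind\langle X\rangle_\otimes\cong\Rep_k(G)$ the extension of $\omega$ is the forgetful functor $\Rep_k(G)\to\Mod(k)$, hence an exact colimit-preserving tensor functor. For (2), once $\mathcal{S}$ is known to be an Ind-object, the surjection $\Sym^*(Y)\twoheadrightarrow\mathcal{S}$ of Definition \ref{defi-solution-algebra} is an epimorphism of ring objects in $\Ind\langle X\rangle_\otimes$ (by exactness of the embedding), so applying $\omega$ produces a surjective $k$-algebra homomorphism $\Sym^*(\omega(Y))=\omega(\Sym^*(Y))\twoheadrightarrow\omega(\mathcal{S})$, using that $\omega$ is an exact colimit-preserving tensor functor to identify $\omega(\Sym^*(Y))$ with the symmetric algebra on $\omega(Y)$. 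Since $Y\in\langle X\rangle_\otimes$, the space $\omega(Y)$ is finite dimensional by Proposition \ref{proposition-p-v-ring-induces-fibre-functor}, so $\Sym^*(\omega(Y))$ is a finitely generated $k$-algebra and therefore so is its quotient $\omega(\mathcal{S})$. For (3), the morphism $\iota$ is a monomorphism in $\mathcal{C}$ between objects of $\Ind\langle X\rangle_\otimes$, hence a monomorphism there as well (a fully faithful functor reflects monomorphisms), so exactness of $\omega$ makes $\omega(\iota)\colon\omega(\mathcal{S})\to\omega(\mathcal{P})$ an injective homomorphism of $k$-algebras. Consequently the scheme-theoretic image of $\Spec\omega(\mathcal{P})\to\Spec\omega(\mathcal{S})$ is $\Spec(\omega(\mathcal{S})/\ker\omega(\iota))=\Spec\omega(\mathcal{S})$, which is precisely the assertion that this morphism has schematically dense image.

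I expect statement (1) to be the main obstacle: the whole point is to pass freely between subobjects and filtered colimits formed in $\mathcal{C}$ and those formed in the Tannakian Ind-category, which is legitimate only because the embedding $\Ind\langle X\rangle_\otimes\hookrightarrow\mathcal{C}$ is exact, fully faithful and colimit-preserving and because filtered colimits in the pointed category $\mathcal{C}$ are exact. Once $\mathcal{S}$ has been placed inside $\Ind\langle X\rangle_\otimes$, statements (2) and (3) are essentially formal consequences of the exactness and tensor-compatibility of the fibre functor.
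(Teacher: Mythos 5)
Your proof is correct, and parts (2) and (3) coincide with the paper's argument: surjectivity of $\Sym^*(\omega(Y))\to\omega(\mathcal S)$ by exactness of $\omega$ plus finite-dimensionality of $\omega(Y)$, and injectivity of $\omega(\iota)$ by exactness again. Where you diverge is in your primary argument for (1). The paper deduces (1) purely from condition (2) of Definition \ref{defi-solution-algebra}: $\Sym^*(Y)$ is an Ind-object of $\langle X\rangle_\otimes$ (being a colimit of the $\Sym^n(Y)$, which are quotients of $Y^{\otimes n}$), and $\Ind(\langle X\rangle_\otimes)$ is closed under subquotients in $\mathcal C$, so the quotient $\mathcal S$ of $\Sym^*(Y)$ is again an Ind-object --- this is exactly the route you relegate to your parenthetical remark. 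Your main argument instead uses condition (1) of the definition: you identify $\mathcal P$ with $\mathcal O(G)=\varinjlim W_i$ via Theorem \ref{pvfibfun} and Proposition \ref{proposition-regular-rep-is-p-v-ring}, form the intersections $\mathcal S\cap W_i$ in $\mathcal C$, and use exactness of filtered colimits (available because $\mathcal C$ is pointed) to write $\mathcal S=\varinjlim(\mathcal S\cap W_i)$. This is valid, but it is heavier: it needs the full Picard--Vessiot correspondence to know $\mathcal P$ is an Ind-object, whereas the paper's route needs only the elementary closure of $\Ind(\langle X\rangle_\otimes)$ under subquotients. On the other hand, your route has the mild advantage of exhibiting $\mathcal S$ directly as a union of subobjects lying in $\langle X\rangle_\otimes$, which is the picture one wants later when comparing with $\mathcal O(G)$ in Theorem \ref{proposition-solution-algebra-equiavalence}. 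Either way, no gap.
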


\begin{proof}
Property (2) of the definition of solution algebras gives statement (1); indeed, the symmetric algebra $\Sym^*(Y)$ is an ind-object of $\langle X \rangle_{\otimes}$, and the  Ind-category $\Ind(\langle X \rangle_{\otimes})$ is closed under subquotients in $\mathcal{C}$. If $\mathcal{S}$ and $Y$ are as in the definition, the $k$-vector space $\omega(Y)$ is finite dimensional and the morphism $\Sym^{*}(\omega(Y)) \to \omega(\mathcal{S})$ in the tensor category $\Mod(k)$ is surjective by exactness of $\omega$, whence statement (2). Finally, the injectivity of $\iota$ and the exactness of $\omega$ imply that the $k$-algebra homomorphism $\omega(\mathcal{S})\to\omega(\mathcal{P})$ is injective, whence (3).
\end{proof}

Note that $\Spec\omega(\mathcal{P})$ is nothing but the Tannakian fundamental group $G$ associated with $\mathcal P$. It is an affine group scheme of finite type over $k$ by (\cite{delignemilne}, Proposition 2.20 b)). Moreover, both $\Spec\omega(\mathcal{S})$ and $\Spec\omega(\mathcal{P})$ come equipped with a canonical $G$-action; the latter is just the usual (left) action of $G$ on itself.

We isolate these properties in a definition:

\begin{definition}\label{definiation-quasi-homogeneous-g-scheme}
Let $k$ be a field, and $G$ a group scheme of finite type over $k$. A quasi-homogeneous $G$-scheme over $k$ is a $G$-scheme $X$ of finite type over $k$ such that there exists a schematically dominant $G$-morphism $G\to X$, where $G$ is considered with its usual $G$-action.
\end{definition}

\begin{remark}
The image of the unit section of $G$ in $X$ gives a $k$-point of $X$ whose $G$-orbit $U$ is schematically dense in $X$. Since $G$ and $X$ are of finite type over $k$, the morphism $U\to X$ is an open immersion with schematically dense image by (\cite{MR0302656}, \S  III.3, Proposition 5.2) and (\cite{MR2675155}, Remark 10.31). It is necessarily the unique $G$-orbit on $X$ with these properties. When $k$ is of characteristic 0, both $G$ and $X$ are reduced (the latter by \cite{MR2675155}, Remark 10.32), and we recover the classical notion of quasi-homogeneous varieties used in \cite{MR3215927}. However, in the applications we shall also consider non-reduced $G$.
\end{remark}

Consider now the category of pairs $(\mathcal{S}, \iota)$, where  $\mathcal{S}$ is a solution algebra and $\iota:\, \mathcal{S}\hookrightarrow\mathcal{P}$ is the embedding specified in the definition. Morphisms of pairs are defined in the obvious way. By the preceding lemma and discussion, the functor $\Spec\circ\,\omega$ sends such a pair to an affine quasi-homogeneous $G$-scheme together with a distinguished $k$-point $z$ which is the image of the unit section of $G$ by the morphism $G\to\Spec\omega(\mathcal{P})$. In fact, we have the following direct generalization of (\cite{MR3215927}, Theorem 3.2.1):

\begin{theorem}\label{proposition-solution-algebra-equiavalence}
The assignment $(\mathcal{S}, \iota) \mapsto (\Spec(\omega(\mathcal{S})), z)$ gives an anti-equivalence between the above category of solution algebras and the category of affine quasi-homogeneous $G$-schemes of finite type over $k$ with a given $k$-point of the schematically dominant orbit.
\end{theorem}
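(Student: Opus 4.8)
The strategy is to reduce, via the Tannakian equivalence already at our disposal, to a purely group‑scheme‑theoretic dictionary. By Proposition~\ref{proposition-p-v-ring-induces-fibre-functor} and the proof of Theorem~\ref{pvfibfun}, the equivalence $\langle X\rangle_\otimes\simeq\Repf_k(G)$ extends to a tensor equivalence $\Ind\langle X\rangle_\otimes\simeq\Rep_k(G)$ under which the unique extension of $\omega$ becomes the forgetful functor, while by Proposition~\ref{proposition-regular-rep-is-p-v-ring} together with the uniqueness clause of Theorem~\ref{pvfibfun} the Picard--Vessiot ring $\mathcal P$ corresponds to the regular representation, and hence $\omega(\mathcal P)$ to the coordinate ring $\mathcal O(G)$ of $G=\Spec\omega(\mathcal P)$. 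Since a tensor equivalence carries ring objects to ring objects and solution algebras are Ind-objects of $\langle X\rangle_\otimes$ by the preceding lemma, the category of pairs $(\mathcal S,\iota)$ is identified with the category of pairs $(A,j)$ where $A$ is a $G$-algebra, $j\colon A\hookrightarrow\mathcal O(G)$ is an injective $G$-algebra homomorphism, and $A$ satisfies the translate of condition~(2) of Definition~\ref{defi-solution-algebra}. The first thing to check is that this last condition says exactly that $A$ is finitely generated as a $k$-algebra: if $\Sym^*(Y)\to\mathcal S$ is surjective then $\Sym^*(\omega(Y))\to A$ is surjective by exactness of $\omega$, so $A$ is generated by $\dim_k\omega(Y)$ elements; conversely, since the $G$-action on $A$ is locally finite, a finite generating set of a finitely generated $G$-algebra $A$ lies in a finite-dimensional $G$-subrepresentation $W$, the map $\Sym^*(W)\to A$ is surjective, and transporting $W$ back through the equivalence produces the required object $Y$.

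It thus remains to produce an anti-equivalence between finitely generated $G$-subalgebras $A\subseteq\mathcal O(G)$ and affine quasi-homogeneous $G$-schemes of finite type over $k$ equipped with a $k$-point $z$ of the schematically dominant orbit, compatibly with $A\mapsto(\Spec A,z)$, where $z$ is the image of the unit $e\in G(k)$ under the $G$-morphism $G=\Spec\mathcal O(G)\to\Spec A$ dual to the inclusion. Taking spectra gives a contravariant equivalence between finitely generated $G$-algebras and affine $G$-schemes of finite type over $k$; under it the inclusion $j\colon A\hookrightarrow\mathcal O(G)$ becomes a $G$-morphism $o\colon G\to\Spec A$, and $o$ is schematically dominant precisely because $j$ is injective, the scheme-theoretic image of an affine morphism being cut out by the kernel of the corresponding ring map. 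A $G$-morphism from $G$ with its translation action to an arbitrary $G$-scheme $T$ is the orbit morphism of the point $o(e)\in T(k)$ and is uniquely determined by it; hence pairs $(A,j)$ correspond to $k$-points $z$ of $\Spec A$ whose orbit morphism $o_z$ is schematically dominant. Finally one shows that $o_z$ is schematically dominant if and only if the orbit $G\cdot z$ is schematically dense in $\Spec A$: the morphism $o_z$ factors as the faithfully flat quotient $G\to G\cdot z$ followed by the locally closed immersion $G\cdot z\hookrightarrow\Spec A$, so its scheme-theoretic image equals the schematic closure of $G\cdot z$. In that case $\Spec A$ is quasi-homogeneous in the sense of Definition~\ref{definiation-quasi-homogeneous-g-scheme}, and by the Remark following that definition $G\cdot z$ is the unique schematically dense orbit, so $z$ indeed lies in it.

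It is now formal to conclude. Both categories in play are thin, in the sense that between any two objects there is at most one morphism: on the solution-algebra side because $\iota$ is a monomorphism, on the geometric side because a morphism must be compatible with the orbit maps out of $G$, which are schematically dominant, hence epimorphisms onto the (separated) affine targets. Moreover, on either side the $\Hom$-set between two objects is nonempty if and only if $\omega(\mathcal S)\subseteq\omega(\mathcal S')$ as sub-$G$-algebras of $\mathcal O(G)$, the passage between the two descriptions being given by the faithful exact equivalence $\omega$ on the Ind-categories; so once the objects are matched as above, fully faithfulness is automatic. The main obstacle is the scheme-theoretic bookkeeping of the second paragraph---matching schematic dominance of the orbit map with schematic density of the orbit and checking that this is exactly Definition~\ref{definiation-quasi-homogeneous-g-scheme}---but this is precisely what is supplied by the references invoked in the Remark after that definition.
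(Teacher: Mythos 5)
Your proposal is correct and follows essentially the same route as the paper: transport everything through the Tannakian equivalence $\Ind\langle X\rangle_\otimes\simeq\Rep_k(G)$ under which $\mathcal P$ becomes $\mathcal O(G)$, translate condition (1) of Definition \ref{defi-solution-algebra} into schematic dominance of the orbit map via the injection $\mathcal O(Z)\hookrightarrow\mathcal O(G)$, and condition (2) into finite generation via a finite-dimensional $G$-stable subspace containing algebra generators. Your extra care with fully faithfulness (both categories being thin) and with the equivalence between schematic dominance of the orbit map and schematic density of the orbit is a welcome expansion of steps the paper dispatches in one line, but it is not a different argument.
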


\begin{proof} The composite functor $\Spec \circ \, \omega$ is fully faithful as it is the composition of fully faithful functors. For essential surjectivity let $Z$ be an affine quasi-homogeneous $G$-scheme of finite type over $k$ with a given $k$-point $z$ as above. By definition, we have a schematically dominant $G$-morphism $G \to Z$ sending the unit section to $z$. It corresponds to an injection  of $G$-algebras
$\mathcal{O}(Z) \hookrightarrow \mathcal{O}(G).$
As $\mathcal{O}(Z)$ is of finite type over $k$, we find a finite-dimensional $G$-invariant subspace $V\subset \mathcal{O}(Z)$ containing a system of $k$-algebra generators of $\mathcal{O}(Z)$. \hbox{Since} $\omega$ induces an equivalence of tensor categories between $\Ind(\langle X \rangle_{\otimes})$ and $\Rep_k(G)$, this morphism comes from a morphism $V\to\mathcal{S}$ in $\Ind(\langle X \rangle_{\otimes})$, with $V$ actually lying in $\langle X \rangle_{\otimes}$. As moreover $V$ contains a system of generators of $\mathcal{O}(Z)$, it gives rise to a surjection of $G$-algebras
$
\Sym^*(V) \twoheadrightarrow \mathcal{O}(Z)$
which translates back to property (2) of the definition of solution algebras via $\omega$. As for property (1), it  corresponds to the injection $\mathcal{O}(Z) \hookrightarrow \mathcal{O}(G)$ via $\omega$.
\end{proof}

\section{Iterative differential rings and modules}\label{ID}

In this section we apply the results of the previous one to the iterative differential modules of Matzat and van der Put \cite{MR1978401}. This theory has its origins in the concept of iterated differentials of Hasse--Schmidt, and is equivalent (in positive characteristic) to the theory of infinitely Frobenius-divisible modules of Katz.

Recall that an iterative differential ring (ID-ring for short) is a pair $\mathcal{R}=(R,\{ \partial_i \}_{i \geq 0})$, where $R$ is a commutative ring and $\partial_i \colon R \rightarrow R$ are additive maps for all $i \geq 0$ such that
\begin{enumerate}
\item $\partial_0=id_R$,
\item $\partial_i(r_1 r_2) = \sum_{j+j'=i} \partial_j(r_1) \partial_{j'}(r_2)$,
\item $\partial_i \circ \partial_j = \binom{i+j}{i} \partial_{i+j}$.
\end{enumerate}

\par The set $\{ \partial_j \}_{j \geq 0}$ of maps is called an iterative derivation on $\mathcal{R}$. The intersection $k$ of the kernels $\ker(\partial_j)$ is called the constant ring of $\mathcal{R}$.

\begin{remark}\rm  When $R$ is a ring containing the field $\mathbb{Q}$ of rational numbers,  every derivation $\delta$ on $R$ can be uniquely extended to an iterative derivation on $R$ by setting $\partial_i = \frac{1}{i!} \delta^i$. In particular, this is the case if $R$ is a simple differential ring of characteristic $0$. Thus in characteristic 0 the theory of ID-rings is equivalent to that of usual differential rings.
\end{remark}

\par Let $\mathcal{R}$ be an ID-ring and $I$ be an ideal in $R$. We say that $I$ is an iterative differential ideal (ID-ideal) if for all $i \geq 0$ we have $\partial_i(I) \subseteq I$. An iterative differential ring $\mathcal{R}$ is called simple if the only iterative differential ideals of $\mathcal{R}$ are $0$ and $R$.

\begin{proposition}\label{simple-id-ring}\hfill
\begin{enumerate}
\item If $\mathcal{R}$ is a simple ID-ring, then $R$ is an integral domain.
\item If $\mathcal{R}$ is a simple ID-ring with constant ring $k$ and $K$ is the fraction field of $R$, then there is a unique iterative derivation on $K$ extending the iterative derivation on $\mathcal{R}$. Moreover, the ring of constants of the iterative differential field $\mathcal{K}$ is $k$; in particular, $k$ is a field.
\end{enumerate}
\end{proposition}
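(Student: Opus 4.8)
The plan is to argue both parts by a careful study of ID-ideals, closely mirroring the classical Picard--Vessiot arguments (see e.g. \cite{MR1862024}, \S 1) but paying attention to the iterative relations (2) and (3) above.

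For part (1), I would first check that in a simple ID-ring $\mathcal{R}$ the ring $R$ has no nontrivial idempotents: if $e^2=e$, then I claim the principal ideal $(e)$ is an ID-ideal. This is where the key iterative computation enters. One shows by induction on $i$ that $\partial_i(e) \in (e)$ for all $i$: differentiating $e^2=e$ using rule (2) gives $\sum_{j+j'=i}\partial_j(e)\partial_{j'}(e) = \partial_i(e)$, and isolating the $j=0$ and $j'=0$ terms yields $2e\,\partial_i(e) = \partial_i(e) - \sum_{0<j<i}\partial_j(e)\partial_{i-j}(e)$; the sum on the right already lies in $(e)$ by induction, so $\partial_i(e)(1-2e) \in (e)$, and multiplying by the unit $1-2e$ (note $(1-2e)^2 = 1-4e+4e = 1$) gives $\partial_i(e)\in(e)$. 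Hence $(e)$ is an ID-ideal, so $e=0$ or $e=1$. A commutative ring with no nontrivial idempotents in which moreover every ID-ideal is trivial is reduced: given a nilpotent $a\neq 0$ of minimal nilpotency degree, one shows the annihilator-type ideal generated by $a$ together with its iterative derivatives is a proper ID-ideal, again using rules (2)--(3) to control $\partial_i(a^n)$. Finally, an analogous argument with the ideal of zero-divisors of a fixed element shows $R$ is a domain: if $ab=0$ with $a,b\neq 0$, one proves $\{r : rb^N = 0 \text{ for some }N\}$ or a suitable ID-saturation is a proper nonzero ID-ideal. The main obstacle here is precisely the bookkeeping in these inductions: in characteristic $p$ the binomial coefficients $\binom{i+j}{i}$ in rule (3) can vanish, so one cannot blithely reduce to a single derivation, and one must phrase everything in terms of the whole family $\{\partial_i\}$.

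For part (2), I would extend each $\partial_i$ to the fraction field $K=\Frac(R)$ by the forced formula coming from the quotient rule: differentiating $r = (r/s)\cdot s$ via rule (2) and solving recursively expresses $\partial_i(r/s)$ as a polynomial in the $\partial_j(r)$, $\partial_j(s)$ and $s^{-1}$. Uniqueness is automatic since any extension must satisfy this relation. Checking that the extended operators satisfy (1)--(3) on $K$ is a routine verification reducing to the identities already known on $R$ (one clears denominators and uses that $R\to K$ is injective). It remains to identify the constants: clearly $k\subseteq \mathcal{K}^\nabla$ where $\mathcal{K}^\nabla$ denotes the constant field of $\mathcal{K}$; for the reverse inclusion, take $x = r/s \in K$ with $\partial_i(x)=0$ for all $i>0$, and consider the ID-ideal $I := \{a\in R : ax \in R\}$ — one checks using the extended rules that $I$ is an ID-ideal of $\mathcal{R}$, and it is nonzero (it contains $s$), hence $I=R$, so $x\in R$ and thus $x\in k$. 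That $k$ is a field is then immediate: it is a domain (being a subring of $R$, which is a domain by part (1)) and every nonzero $c\in k$ has $c^{-1}\in K$ a constant, hence $c^{-1}\in k$. I expect no serious difficulty in part (2) beyond making the recursive quotient-rule formula precise; the only mild subtlety is verifying rule (3) for the extension, which again requires care with the coefficients $\binom{i+j}{i}$ but goes through by density/injectivity of $R \hookrightarrow K$.
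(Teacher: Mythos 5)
Note first that the paper does not actually prove this proposition: it simply cites \cite{MR1978401}, Lemma 3.2, so a self-contained argument is welcome. Your part (2) is correct and is the standard argument: the recursive quotient rule forces uniqueness of the extension to $K$, and for a constant $x\in K$ the conductor $I=\{a\in R: ax\in R\}$ is an ID-ideal (because $\partial_i(ax)=\partial_i(a)\,x$ when $x$ is constant) which is nonzero, hence equal to $R$, so $x\in R$ and thus $x\in k$; invertibility of nonzero constants then makes $k$ a field. A slicker route that avoids the bookkeeping you worry about in verifying rules (1)--(3): encode the iterative derivation as the ring homomorphism $\phi\colon R\to R[[T]]$, $r\mapsto\sum_i\partial_i(r)T^i$; since $\phi(s)$ has nonzero constant term $s$, it becomes a unit in $K[[T]]$, so $\phi$ extends uniquely to $K$ by the universal property of localization, and the axioms for the extension follow from that uniqueness.

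Part (1) is where the genuine gaps lie. The idempotent step, though correct, is superfluous: a reduced ring with no nontrivial idempotents need not be a domain (e.g.\ $k[x,y]/(xy)$), and you never use it. The two claims you actually need are precisely the ones left unproved, and the first is misleading as stated. (a) \emph{Reducedness:} the ideal generated by a nilpotent $a\neq 0$ and all its $\partial_i(a)$ is always an ID-ideal and always nonzero, so in a simple ID-ring it automatically equals $R$; asserting it is ``proper'' begs the question. What must be proved is that each $\partial_i(a)$ is itself nilpotent, so that this ideal lies in the nilradical. This follows from the Leibniz rule alone: if $a^n=0$ then $0=\partial_{in}(a^n)=\sum_{j_1+\cdots+j_n=in}\partial_{j_1}(a)\cdots\partial_{j_n}(a)$, and every summand other than $\partial_i(a)^n$ contains a factor $\partial_j(a)$ with $j<i$, which is nilpotent by induction on $i$; hence $\partial_i(a)^n$, and so $\partial_i(a)$, is nilpotent. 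Thus the nilradical is a nonzero proper ID-ideal whenever $R$ is not reduced. (b) \emph{No zero divisors:} once $R$ is reduced, your set $\{r: rb^N=0 \text{ for some }N\}$ collapses to the annihilator of $b$, but showing it is an ID-ideal still needs an argument: from $\partial_i(a)b=-\sum_{0<j\le i}\partial_j(b)\partial_{i-j}(a)$ one gets $\partial_i(a)b^2=0$ by induction, hence $(\partial_i(a)b)^2=\partial_i(a)\cdot(\partial_i(a)b^2)=0$, hence $\partial_i(a)b=0$ by reducedness. Without these two computations the proposal is not yet a proof; with them it becomes a complete, characteristic-free argument (and, notably, iterativity --- rule (3) --- is never used in part (1), only the higher Leibniz rule).
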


\begin{proof} See \cite{MR1978401}, Lemma 3.2. \end{proof}

We now recall the definition of iterative connections. Let $\mathcal{R}$ be an ID-ring. An iterative differential module (or ID-module) $\mathcal{M}$ over $\mathcal{R}$ is a pair $(M,\{ \nabla_i \}_{i \geq 0})$, where $M$ is an $R$-module and $\nabla_i \colon M \rightarrow M$ are additive maps for $i \geq 0$ such that
\begin{enumerate}
\item $\nabla_0 = id_M$,
\item $\nabla_i(rm) = \sum_{j+j'=i} \partial_j(r) \nabla_{j'}(m)$,
\item $\nabla_i \circ \nabla_j = \binom{i+j}{i} \nabla_{i+j}$.
\end{enumerate}

\par The set of maps $\{ \nabla_i \}_{i \geq 0}$ is called an iterative connection on $M$ over $\mathcal{R}$.

ID-modules over a fixed $ID$-ring $\mathcal{R}$ form a tensor category with the tensor product and inner Hom operations defined as in (\cite{MR1978401}, Section 2.2). It becomes a pointed tensor category via the natural forgetful functor with values in $R$-modules. If the underlying module $M$ of an $ID$-module $\mathcal{M}$ is finitely generated and projective over $R$, then the inner Hom $\mathcal{M}^{\vee}=\mathcal{H}om_R(M,R)$ defines a dual for $M$ in the sense of symmetric monoidal categories.

The following proposition is a direct generalization of \cite{MR3215927}, Theorem 2.2.1 to the iterative differential setup. It is proven by exactly the same argument.

\begin{proposition}\label{proposition-fg-id-module-is-projective}
Let $\mathcal{R}$ a simple iterative differential ring and denote by $\mathcal{K}$ the quotient field of $\mathcal{R}$ with its canonical ID-structure. Let $\mathcal{M}$ be a finitely generated ID-module over $\mathcal{R}$, and set $\mathcal{M}_{\mathcal{K}}:=\mathcal{M}\otimes_{\mathcal{R}}\mathcal{K}$. We have the following:
\begin{enumerate}
\item The underlying module of $\mathcal{M}$ and its ID-subquotients are all projective modules.
\item The category consisting of objects that are ID-subquotients of finite direct sums of tensor products of the form $\mathcal{M}^{\otimes i} \otimes (\mathcal{M}^{\vee})^{j}$ form a rigid $k$-linear tensor category $\langle \mathcal{M} \rangle_{\otimes}$ over the constant field $k$ of $\mathcal{R}$.
\item The base change functor $\langle \mathcal{M} \rangle_{\otimes} \to \langle \mathcal{M}_{\mathcal{K}} \rangle_{\otimes}$ is an equivalence of $k$-linear tensor abelian categories.
\end{enumerate}
\end{proposition}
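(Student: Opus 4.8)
The plan is to transcribe André's proof of \cite{MR3215927}, Theorem 2.2.1, checking that the only features of the iterative derivation it uses are the normalization $\nabla_0=\id$ and the Leibniz rule $\nabla_i(rm)=\sum_{j+j'=i}\partial_j(r)\nabla_{j'}(m)$; the composition rule for the $\nabla_i$ is never invoked, so nothing peculiar to characteristic $p$ intervenes. By Proposition \ref{simple-id-ring}, $R$ is an integral domain with fraction field $K$ and $k$ is a field; moreover an ID-field is trivially a simple ID-ring, so anything proved for $\mathcal{M}$ over $\mathcal{R}$ applies verbatim to $\mathcal{M}_{\mathcal{K}}$ over $\mathcal{K}$. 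The substantial point is the projectivity in (1); parts (2) and (3) are then tensor-categorical bookkeeping.

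For (1), the key lemma is that \emph{for any ID-module $\mathcal{N}$ over $\mathcal{R}$ the annihilator ideal $\mathrm{Ann}_R(N)\subseteq R$ is an ID-ideal}: if $aN=0$, then expanding $0=\nabla_i(an)=\sum_{j+j'=i}\partial_j(a)\nabla_{j'}(n)$ and feeding in the inductive hypotheses $\partial_j(a)N=0$ for $j<i$ together with $aN=0$ leaves $\partial_i(a)n=0$ for all $n$. By simpleness of $\mathcal{R}$ it follows that a nonzero ID-module is faithful over $R$ (so in particular every finitely generated torsion ID-module vanishes). Now let $\mathcal{M}$ be finitely generated. Each exterior power $\wedge^{j}M$ inherits an iterative connection, whence $\mathrm{Ann}_R(\wedge^{j}M)\in\{(0),R\}$; since $V(\mathrm{Fitt}_i(M))=\mathrm{Supp}(\wedge^{i+1}M)=V(\mathrm{Ann}_R(\wedge^{i+1}M))$ and $R$ is reduced, each Fitting ideal $\mathrm{Fitt}_i(M)$ is $(0)$ or $R$. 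These ideals increase with $i$ and equal $R$ once $i$ reaches the number of generators, so there is an $r$ with $\mathrm{Fitt}_{r-1}(M)=(0)$ and $\mathrm{Fitt}_r(M)=R$, and the standard Fitting-ideal criterion (a finitely generated module with $\mathrm{Fitt}_{r-1}=(0)$ and $\mathrm{Fitt}_r=R$ is projective of constant rank $r$) shows $M$ is finitely generated projective. Hence every finitely generated ID-module over $\mathcal{R}$ is projective. For an ID-submodule $\mathcal{N}\subseteq\mathcal{M}$ the quotient $\mathcal{M}/\mathcal{N}$ is finitely generated, hence projective, so $0\to N\to M\to M/N\to 0$ splits and $N$ is finitely generated projective too; iterating this, and applying the above to the finitely generated ID-modules $\mathcal{M}^{\otimes i}\otimes(\mathcal{M}^{\vee})^{\otimes j}$ (the duals existing since $M$ is now known to be projective), shows that every object of $\langle\mathcal{M}\rangle_{\otimes}$ has finitely generated projective underlying module.

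Granting (1), part (2) follows at once: every object of $\langle\mathcal{M}\rangle_{\otimes}$ is dualizable in the tensor category of ID-modules over $\mathcal{R}$ (its dual being $\mathcal{H}om_R$ into $R$ with the induced connection), the subcategory is closed under ID-subquotients, tensor products and duals by construction, and it is $k$-linear since $\End$ of the unit object $\mathcal{R}$ is the constant field $k$. For (3), base change $-\otimes_{\mathcal{R}}\mathcal{K}$ is a $k$-linear, exact ($K$ being $R$-flat) tensor functor; it is faithful because the underlying modules of objects of $\langle\mathcal{M}\rangle_{\otimes}$ are torsion-free and hence embed into their base changes. For fullness, a morphism between the base changes $\mathcal{A}_{\mathcal{K}},\mathcal{B}_{\mathcal{K}}$ of objects $\mathcal{A},\mathcal{B}$ of $\langle\mathcal{M}\rangle_{\otimes}$ is a horizontal vector $v$ of $P\otimes_R K$, where $P$ is the (finitely generated projective) internal Hom of $\mathcal{A}$ and $\mathcal{B}$; clearing denominators gives $0\ne rv\in P$, and since $Rv$ is free of rank one with connection $\nabla_i(rv)=\partial_i(r)v$, hence a copy of the unit object, the ID-submodule $Rv\cap P$ corresponds to a nonzero ID-ideal of $\mathcal{R}$, so it is all of $Rv$ by simpleness and $v\in P$. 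Essential surjectivity is analogous: an ID-subquotient $W'/W''$ of $\mathcal{N}\otimes_{\mathcal{R}}\mathcal{K}$ with $\mathcal{N}\in\langle\mathcal{M}\rangle_{\otimes}$ is the base change of $(W'\cap N)/(W''\cap N)$, because the finite-dimensional $K$-spaces $W',W''$ meet $N$ in full rank and intersecting with $N$ preserves ID-substructures. The only step I expect to require genuine care is the Fitting-ideal argument in (1); everything else is routine.
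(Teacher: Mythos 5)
Your proposal is correct and follows the same route the paper takes: the paper gives no details for this proposition, deferring to André's proof of Theorem 2.2.1 in \cite{MR3215927} with the remark that it transfers verbatim, and your write-up is a faithful and complete realization of exactly that transfer (annihilators of ID-modules are ID-ideals, hence Fitting ideals are trivial and $M$ is projective; the rest is the standard rigidity and localization bookkeeping). The only point worth flagging is your aside that the composition rule for the $\nabla_i$ is ``never invoked'': it is not used in the estimates themselves, but it is needed to check that the induced maps on tensor and exterior powers are again iterative connections, which is supplied by the constructions of \cite{MR1978401}, Section 2.2.
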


\par We can now define Picard-Vessiot rings for ID-modules by specializing Definition \ref{definition-p-v-ring} to the category of ID-modules over $\mathcal{R}$. In view of part (2) of the above proposition, Theorem \ref{pvfibfun} applies to the subcategory $\langle \mathcal{M} \rangle_{\otimes}$ and gives:

\begin{corollary} In the situation of the proposition there is an equivalence of categories between Picard--Vessiot rings for the subcategory $\langle \mathcal{M} \rangle_{\otimes}$   and neutral fibre functors on it. In particular, Picard--Vessiot rings exist if $k$ is algebraically closed.
\end{corollary}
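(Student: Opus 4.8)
The plan is to deduce this corollary directly from Theorem \ref{pvfibfun} and Proposition \ref{proposition-fg-id-module-is-projective}, once one has checked that the tensor category $\mathcal C$ of ID-modules over $\mathcal R$ genuinely fits the abstract framework of Sections 2 and 3. First I would verify the standing hypotheses on $\mathcal C$: it is a $k$-linear abelian symmetric monoidal category, with tensor product $\otimes_R$ and the inner Hom of (\cite{MR1978401}, \S 2.2); it has all small colimits, and these commute with $\otimes_R$ in each variable, both facts being inherited from $\Mod(R)$ where colimits of ID-modules are computed; and the forgetful functor $\vartheta\colon\mathcal C\to\Mod(R)$ is a faithful exact tensor functor (it is even $R$-linear, hence $k$-linear, and preserves colimits), so $\mathcal C$ is pointed. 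The one point needing a small translation is the simpleness of the unit object: the unit of $\mathcal C$ is $\mathcal R$ itself, and a morphism $\mathcal R\to\mathcal N$ of ID-modules has for kernel an ID-ideal of $R$, so $\mathcal R$ is a simple object of $\mathcal C$ precisely because $\mathcal R$ is a simple ID-ring. Similarly, an ID-module endomorphism of $\mathcal R$ is multiplication by a constant, so $\End_{\mathcal C}(1)$ is the constant ring $k$, which is a field by Proposition \ref{simple-id-ring}, and $\mathcal C$ is $k$-linear over this $k$.

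With these checks in place, Proposition \ref{proposition-fg-id-module-is-projective} supplies the rest of the input: part (1) shows the finitely generated ID-module $\mathcal M$ has projective underlying module, hence is dualizable in $\mathcal C$, and part (2) shows that $\langle \mathcal M\rangle_\otimes$ is a rigid $k$-linear abelian tensor subcategory of $\mathcal C$. Thus $X=\mathcal M$ satisfies the hypotheses of Theorem \ref{pvfibfun}, which yields a bijection $\mathcal P\mapsto\omega_{\mathcal P}$ between isomorphism classes of Picard--Vessiot rings for $\langle \mathcal M\rangle_\otimes$ and $k$-valued fibre functors on $\langle \mathcal M\rangle_\otimes$.

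To promote this bijection to an equivalence of categories I would note that both sides are groupoids and that the bijection is compatible with automorphisms. Any homomorphism of Picard--Vessiot rings is an isomorphism by the minimality axiom (4) of Definition \ref{definition-p-v-ring}; any tensor natural transformation between fibre functors on the rigid category $\langle \mathcal M\rangle_\otimes$ is an isomorphism by (\cite{delignemilne}, Proposition 1.13); and Corollary \ref{aut-galois} identifies $\underline{\Aut}(\mathcal P\mid 1)$ with the Galois group scheme $G=\underline{\Aut}^\otimes(\omega_{\mathcal P})$, so the two automorphism groups agree. Hence the correspondence of objects extends canonically to an equivalence of categories. Finally, for the last assertion: when $k$ is algebraically closed the rigid abelian tensor category $\langle \mathcal M\rangle_\otimes$ admits a $k$-valued fibre functor by (\cite{MR1106898}, Corollaire 6.20), and applying the equivalence just established produces a Picard--Vessiot ring.

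I do not expect a genuine obstacle, as the corollary is an application of machinery already built; the only place calling for care — which I would flag explicitly — is the passage from the bijection on isomorphism classes in Theorem \ref{pvfibfun} to the asserted equivalence of categories, which rests on the groupoid structure of both sides together with Corollary \ref{aut-galois}.
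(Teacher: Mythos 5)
Your proposal is correct and follows essentially the same route as the paper, which simply invokes Theorem \ref{pvfibfun} in view of Proposition \ref{proposition-fg-id-module-is-projective} (2); you merely make explicit the verifications the paper leaves implicit (that ID-modules over $\mathcal R$ form a pointed tensor category with simple unit and $\End_{\mathcal C}(1)=k$, and the passage from the bijection on isomorphism classes to an equivalence of groupoids via Corollary \ref{aut-galois}). The latter point is a worthwhile clarification, but it does not constitute a different approach.
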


\par Consider now the localization $\mathcal{M}_{\mathcal{K}}$. Combining Theorem \ref{pvfibfun} with Proposition \ref{proposition-fg-id-module-is-projective} (3), we obtain:

\begin{corollary}\label{pv-localization} The assignment $\mathcal P\mapsto\mathcal{P}_{\mathcal{K}}$ gives a bijective correspondence between Picard-Vessiot rings for $\langle \mathcal{M} \rangle_{\otimes}$ and $\langle \mathcal{M}_{\mathcal{K}} \rangle_{\otimes}$. Moreover, the associated Galois group schemes are naturally isomorphic.\end{corollary}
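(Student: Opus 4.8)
The plan is to reduce everything to Theorem \ref{pvfibfun} and the base change equivalence $F\colon\langle\mathcal M\rangle_\otimes\xrightarrow{\ \sim\ }\langle\mathcal M_{\mathcal K}\rangle_\otimes$, $F=-\otimes_{\mathcal R}\mathcal K$, of Proposition \ref{proposition-fg-id-module-is-projective}(3). At the level of isomorphism classes the statement is then almost immediate: by Theorem \ref{pvfibfun} Picard--Vessiot rings for $\langle\mathcal M\rangle_\otimes$ (resp.\ for $\langle\mathcal M_{\mathcal K}\rangle_\otimes$) correspond bijectively to $k$-valued fibre functors on $\langle\mathcal M\rangle_\otimes$ (resp.\ on $\langle\mathcal M_{\mathcal K}\rangle_\otimes$), and precomposition with a quasi-inverse of $F$ is a bijection between the two sets of fibre functors; moreover, if $\omega$ and $\omega\circ F$ correspond under this bijection, then applying $\underline{\Aut}^{\otimes}$ and using that $F$ is a tensor equivalence yields a canonical isomorphism of affine group schemes between their Tannakian fundamental groups. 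What still requires an argument is that the resulting bijection on isomorphism classes of Picard--Vessiot rings is implemented by the explicit assignment $\mathcal P\mapsto\mathcal P_{\mathcal K}:=\mathcal P\otimes_{\mathcal R}\mathcal K$, and that the group isomorphism just produced is then an isomorphism of the associated Galois group schemes.

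To see this I would first observe that every Picard--Vessiot ring $\mathcal P$ for $\langle\mathcal M\rangle_\otimes$ lies in $\Ind\langle\mathcal M\rangle_\otimes$: by Theorem \ref{pvfibfun} it is isomorphic to the ring $\mathcal P_{\omega_{\mathcal P}}$ constructed in the proof of that theorem, which is the image of the regular representation $\mathcal O(G)$, $G=\underline{\Aut}^{\otimes}(\omega_{\mathcal P})$, under the equivalence $\Rep_k(G)\cong\Ind\langle\mathcal M\rangle_\otimes$ followed by the embedding into the category of ID-modules over $\mathcal R$. Since all tensor functors in play commute with small colimits, $F$ has a unique colimit-preserving extension $\Ind(F)\colon\Ind\langle\mathcal M\rangle_\otimes\xrightarrow{\ \sim\ }\Ind\langle\mathcal M_{\mathcal K}\rangle_\otimes$, again a tensor equivalence, and under the embeddings of these Ind-categories into the categories of ID-modules over $\mathcal R$ and over $\mathcal K$ furnished by Lemma \ref{indlem}(2) (as in the proof of Theorem \ref{pvfibfun}) this extension is identified with the restriction of $-\otimes_{\mathcal R}\mathcal K$; in particular $\Ind(F)(\mathcal P)=\mathcal P_{\mathcal K}$. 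Setting $G'=\underline{\Aut}^{\otimes}(\omega_{\mathcal P}\circ F^{-1})$ and transporting $\Ind(F)$ through the Tannakian equivalences $\Rep_k(G)\cong\Ind\langle\mathcal M\rangle_\otimes$ and $\Rep_k(G')\cong\Ind\langle\mathcal M_{\mathcal K}\rangle_\otimes$, one checks that the resulting tensor equivalence $\Rep_k(G)\to\Rep_k(G')$ carries the forgetful functor of the source to that of the target up to isomorphism, hence by Tannakian duality is induced by an isomorphism $G\xrightarrow{\sim}G'$, and it carries $\mathcal O(G)$ to $\mathcal O(G')$. As $\mathcal P$ corresponds to $\mathcal O(G)$ by construction, $\mathcal P_{\mathcal K}=\Ind(F)(\mathcal P)$ corresponds to $\mathcal O(G')$, i.e.\ $\mathcal P_{\mathcal K}$ is, up to isomorphism, the Picard--Vessiot ring for $\langle\mathcal M_{\mathcal K}\rangle_\otimes$ attached to $\omega_{\mathcal P}\circ F^{-1}$ — exactly the image of $\mathcal P$ under the bijection of the first paragraph. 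The isomorphism $G\xrightarrow{\sim}G'$ is then the asserted natural isomorphism of Galois group schemes, and injectivity and surjectivity of $\mathcal P\mapsto\mathcal P_{\mathcal K}$ on isomorphism classes follow from the first paragraph, with inverse induced by $\Ind(F^{-1})$.

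The one genuinely non-formal point, and the step I expect to cost the most effort, is the identification in the middle paragraph of the Ind-extension $\Ind(F)$ with the restriction of the base change functor $-\otimes_{\mathcal R}\mathcal K$ to $\Ind\langle\mathcal M\rangle_\otimes$: namely, that $-\otimes_{\mathcal R}\mathcal K$ sends this Ind-subcategory of ID-modules over $\mathcal R$ into $\Ind\langle\mathcal M_{\mathcal K}\rangle_\otimes$ and realizes there the colimit-preserving extension of $F$. This rests on the fact that $-\otimes_{\mathcal R}\mathcal K$ commutes with small filtered colimits, combined with the explicit description of both Ind-categories as fully abelian tensor subcategories obtained in the proof of Theorem \ref{pvfibfun}. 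Everything downstream of that — the transfer of fibre functors, of the regular representation, and hence of the Picard--Vessiot property and of the Galois group scheme — is formal Tannakian bookkeeping.
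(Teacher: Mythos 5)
Your proposal is correct and follows the same route the paper intends: the corollary is stated there as an immediate consequence of combining Theorem \ref{pvfibfun} with the equivalence of Proposition \ref{proposition-fg-id-module-is-projective}~(3), with no further argument given. The extra work you do --- showing that the abstract bijection of fibre functors is actually implemented by the concrete assignment $\mathcal P\mapsto\mathcal P_{\mathcal K}$, via the identification of $\Ind(F)$ with the restriction of $-\otimes_{\mathcal R}\mathcal K$ to $\Ind\langle\mathcal M\rangle_\otimes$ --- is a correct filling-in of the one non-formal point the paper leaves implicit.
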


\par Next we turn to solution algebras, defined as in Definition \ref{defi-solution-algebra} in the special context of ID-modules. The following proposition shows that this notion is the exact analogue of Andr\'e's solution algebras (\cite{MR3215927} Definition 3.1.1) for ID-modules.

\begin{proposition}\label{proposition-sol-alg-equivalent-characterization}
An ID-ring $\mathcal{S}$ over $\mathcal{R}$ is a solution algebra for $\langle \mathcal{M} \rangle_{\otimes}$ if and only if it satisfies the following properties.
\begin{enumerate}
\item The underlying ring $S$ is an integral domain.
\item The constant field of the quotient field of $S$ is $k$.
\item There exists an object $\mathcal N$ in $\langle \mathcal{M} \rangle_{\otimes}$ and a  morphism $\mathcal{N} \to \mathcal{S}$ of ID-modules over $\mathcal{R}$ whose image generates $S$ as an $R$-algebra.
\end{enumerate}
\end{proposition}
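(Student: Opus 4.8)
The plan is to prove the two implications separately; the direction ``$\mathcal{S}$ a solution algebra $\Rightarrow$ (1)--(3)'' is formal, while the converse carries the substance. For the formal direction, if $\iota\colon\mathcal{S}\hookrightarrow\mathcal{P}$ is the embedding into a Picard--Vessiot ring of Definition \ref{defi-solution-algebra}, then $\mathcal{P}$, being a simple ring in the category of ID-modules over $\mathcal{R}$, is a simple ID-ring, so by Proposition \ref{simple-id-ring} its underlying ring $P$ is a domain and the constant field of $\Frac(P)$ equals the constant ring of $\mathcal{P}$, which is $k$ by Definition \ref{definition-p-v-ring}(2). Hence $S$ is a domain (this is (1)); moreover $\Frac(S)\subseteq\Frac(P)$ has constant field contained in $k$, while $k\subseteq S$ consists of constants, giving (2). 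For (3) apply the forgetful functor $\vartheta$ (exact and faithful) to the surjection $\Sym^{*}(Y)\twoheadrightarrow\mathcal{S}$: one gets a surjection $\Sym^{*}_{R}(\vartheta Y)\twoheadrightarrow S$, i.e. the image of the ID-module $\mathcal{N}:=Y$ generates $S$ as an $R$-algebra, and the underlying module of $Y$ is finitely generated projective by Proposition \ref{proposition-fg-id-module-is-projective}(1).

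For the converse I must produce the two properties of Definition \ref{defi-solution-algebra} from (1)--(3). Property (2) of that definition is immediate with $Y:=\mathcal{N}$: by (3) the $R$-algebra map $\Sym^{*}_{R}(M_{\mathcal{N}})\to S$ underlying $\Sym^{*}(\mathcal{N})\to\mathcal{S}$ is surjective, and since $\vartheta$ is exact and faithful it reflects epimorphisms, so $\Sym^{*}(\mathcal{N})\to\mathcal{S}$ is an epimorphism of ID-modules. The real task is to construct an injective homomorphism of ID-$\mathcal{R}$-algebras $\iota\colon\mathcal{S}\to\mathcal{P}$. Set $\mathcal{K}:=\Frac(\mathcal{R})$, $\mathcal{L}:=\Frac(\mathcal{S})$ — an ID-field by (1) (a localization of an ID-ring), with constant field $k$ by (2) and satisfying $\mathcal{L}=\mathcal{K}(\sigma(M_{\mathcal{N}}))$ — and $\mathcal{Q}:=\Frac(\mathcal{P})$, an ID-field with constant field $k$ carrying the Galois group scheme $G$. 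Form the ID-ring $\mathcal{L}\otimes_{\mathcal{K}}\mathcal{Q}$, pick a maximal ID-ideal $\mathfrak{m}$, and set $\mathcal{D}:=(\mathcal{L}\otimes_{\mathcal{K}}\mathcal{Q})/\mathfrak{m}$, a simple ID-ring, hence a domain, into which both $\mathcal{L}$ and $\mathcal{Q}$, being ID-fields, embed; let $\mathcal{E}:=\Frac(\mathcal{D})$. I claim $\mathfrak{m}$ can be chosen so that the constant field of $\mathcal{E}$ is $k$.

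Granting the claim, the argument concludes as follows. The object $\mathcal{N}$, lying in $\langle\mathcal{M}\rangle_{\otimes}$, is solvable in $\mathcal{P}$, hence in $\mathcal{Q}$ and in $\mathcal{E}$; since all three of $\mathcal{P},\mathcal{Q},\mathcal{E}$ have constant field $k$, the usual computation (adjunction along base change, triviality of $\mathcal{N}$ over each ring, and the fact that each ring has constant field $k$) shows that the restriction maps
\[
\Hom_{\mathcal{R}\text{-ID}}(\mathcal{N},\mathcal{P})\longrightarrow\Hom_{\mathcal{R}\text{-ID}}(\mathcal{N},\mathcal{Q})\longrightarrow\Hom_{\mathcal{R}\text{-ID}}(\mathcal{N},\mathcal{E})
\]
are isomorphisms of $k$-vector spaces, all of dimension equal to the rank of $\mathcal{N}$ (injectivity of each map uses that the rings embed into one another and that the underlying module of $\mathcal{N}$ is projective). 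Now the composite $\mathcal{N}\xrightarrow{\sigma}\mathcal{S}\hookrightarrow\mathcal{L}\hookrightarrow\mathcal{E}$ is an element of $\Hom_{\mathcal{R}\text{-ID}}(\mathcal{N},\mathcal{E})$, hence comes from a morphism $\hat{\sigma}\colon\mathcal{N}\to\mathcal{P}$; that is, $\sigma(M_{\mathcal{N}})$, viewed inside $\mathcal{E}$, coincides with $\hat{\sigma}(M_{\mathcal{N}})$ and so lies in the image of $\mathcal{P}$. Therefore $\mathcal{S}=R[\sigma(M_{\mathcal{N}})]$ lies in $\mathcal{P}$ inside $\mathcal{E}$, and the resulting map $\iota\colon\mathcal{S}\to\mathcal{P}$ is a homomorphism of ID-$\mathcal{R}$-algebras which is injective (because $\mathcal{S}\hookrightarrow\mathcal{E}$) and a monomorphism of ID-modules (its underlying map of $R$-modules being injective).

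The main obstacle is the claim, a ``no new constants'' statement: $\mathcal{L}\otimes_{\mathcal{K}}\mathcal{Q}$ admits a maximal ID-ideal whose quotient has constant field exactly $k$. This is where hypothesis (2) is used essentially and non-formally, and where one must be careful not to tacitly assume $k$ algebraically closed. (In Tannakian terms, via Theorem \ref{proposition-solution-algebra-equiavalence} the whole converse amounts to saying that the $G$-scheme $\Spec\omega(\mathcal{S})$ is quasi-homogeneous and has a $k$-rational point in its open orbit, and (1)--(2) are precisely what guarantee this.) I would handle the claim by reducing, via Proposition \ref{proposition-fg-id-module-is-projective}(3) and Corollary \ref{pv-localization}, to the case $\mathcal{R}=\mathcal{K}$ (the class of solution algebras is easily seen to be stable under this localization), and then appealing to the foundational theory of Picard--Vessiot extensions of ID-fields of Matzat--van der Put \cite{MR1978401} and Maurischat \cite{MR2657686}: a quotient of $\mathcal{L}\otimes_{\mathcal{K}}\mathcal{P}$ by a maximal ID-ideal is again a Picard--Vessiot ring for the category base-changed to $\mathcal{L}$ — the very input behind uniqueness of Picard--Vessiot rings over possibly imperfect, non-algebraically-closed constant fields — and as such has constant field $C_{\mathcal{L}}=k$; its fraction field, into which $\mathcal{L}$ and $\mathcal{P}$ both embed, serves as $\mathcal{E}$. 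The remaining ingredients — existence and uniqueness of the iterative derivations on the fraction fields involved, exactness and faithfulness of the forgetful functor, and the dimension counts for spaces of ID-morphisms into ID-fields with constant field $k$ — are routine.
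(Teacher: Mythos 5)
Your proof is correct and takes essentially the same route as the paper: the forward direction is the same chain $\mathcal{R}\subset\mathcal{S}\subset\mathcal{P}$ argument via Proposition \ref{simple-id-ring}, and for the converse the paper simply invokes ``the same argument as'' Andr\'e's Proposition 3.1.6 (1) in \cite{MR3215927}, which is precisely the argument you spell out (maximal ID-ideal in $\mathcal{L}\otimes_{\mathcal{K}}\mathcal{Q}$, no new constants, then the dimension count showing every ID-morphism $\mathcal{N}\to\mathcal{E}$ already lands in $\mathcal{P}$). The one delicate point, the no-new-constants claim for the quotient by a maximal ID-ideal, you correctly isolate and source to the same ID-theoretic inputs (\cite{MR1978401}, \cite{MR2657686}) that the paper's citation implicitly relies on.
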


\begin{proof}
Let first $\mathcal{S}$ be a solution algebra in the sense of Definition \ref{defi-solution-algebra}. We only have to check the first two conditions, as the third one is satisfied by definition. Property (1) follows as $\mathcal{S}$ can be embedded into a Picard-Vessiot ring $\mathcal P$ which is a simple ID-ring and hence an integral domain by Proposition \ref{simple-id-ring}. Moreover, since $\mathcal{R}$ is a simple ID-ring, the natural map $\mathcal{R} \to \mathcal{S}$ is injective, whence a chain $\mathcal{R} \subset \mathcal{S} \subset \mathcal{P}$ of ID-rings which are integral domains. Since the constant field of the quotient fields of $\mathcal{R} $ and $\mathcal{P}$ is $k$, the same is true for $\mathcal S$.

\par In the other direction, we only need to show that $\mathcal S$ embeds in a Picard--Vessiot ring of $\langle\mathcal{M}\rangle_\otimes$. This is proven by exactly the same argument as the characteristic zero case in (\cite{MR3215927}, Proposition 3.1.6 (1)).
\end{proof}

\par The general theory of solution algebras (Theorem \ref{proposition-solution-algebra-equiavalence}) gives:

\begin{corollary} There is an anti-equivalence between the category of solution algebras for $\langle \mathcal{M} \rangle_{\otimes}$ and the category of affine quasi-homogeneous $G$-schemes over $k$.\end{corollary}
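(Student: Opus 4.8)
The plan is to obtain the corollary from Theorem~\ref{proposition-solution-algebra-equiavalence} by forgetting the chosen embeddings into $\mathcal{P}$ and the distinguished $k$-points. Concretely, I would verify that $\mathcal{S}\mapsto\Spec(\omega(\mathcal{S}))$ is a contravariant equivalence from the category of solution algebras for $\langle\mathcal{M}\rangle_\otimes$ (morphisms being ID-ring homomorphisms over $\mathcal{R}$) to the category of affine quasi-homogeneous $G$-schemes of finite type over $k$ (morphisms being $G$-equivariant morphisms). That this functor is well defined has essentially been recorded already: a solution algebra $\mathcal{S}$ is an ind-object of $\langle\mathcal{M}\rangle_\otimes$ whose image $\omega(\mathcal{S})$ is a finitely generated $k$-algebra with a $G$-action, and any embedding $\iota\colon\mathcal{S}\hookrightarrow\mathcal{P}$ produces a schematically dominant $G$-morphism $G=\Spec(\omega(\mathcal{P}))\to\Spec(\omega(\mathcal{S}))$, so that the target is quasi-homogeneous.

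For full faithfulness I would use that, by Theorem~\ref{pvfibfun} and its proof, $\omega$ extends to an equivalence of tensor categories $\Ind(\langle\mathcal{M}\rangle_\otimes)\simeq\Rep_k(G)$ carrying $\mathcal{P}$ to the regular representation, $\omega(\mathcal{P})\cong\mathcal{O}(G)$. Since $\Ind(\langle\mathcal{M}\rangle_\otimes)$ is a full subcategory of the category of ID-modules over $\mathcal{R}$, an ID-ring homomorphism $\mathcal{S}\to\mathcal{S}'$ of solution algebras is nothing but a morphism of ring objects in $\Ind(\langle\mathcal{M}\rangle_\otimes)$; as $\omega$ is a tensor equivalence it identifies these with $G$-algebra homomorphisms $\omega(\mathcal{S})\to\omega(\mathcal{S}')$, that is, with $G$-morphisms $\Spec(\omega(\mathcal{S}'))\to\Spec(\omega(\mathcal{S}))$. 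This step is precisely where the compatibility with the embeddings $\iota$ demanded of morphisms in Theorem~\ref{proposition-solution-algebra-equiavalence} becomes unnecessary: a general morphism of solution algebras need not respect any chosen embedding into $\mathcal{P}$, yet it still corresponds to a $G$-morphism of the associated schemes.

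For essential surjectivity, let $Z$ be an affine quasi-homogeneous $G$-scheme of finite type over $k$. By Definition~\ref{definiation-quasi-homogeneous-g-scheme} there is a schematically dominant $G$-morphism $G\to Z$; in particular $Z$ automatically carries a $k$-point of its schematically dominant orbit, the image of the unit section, so no extra datum is lost. On coordinate rings this morphism gives an injection of $G$-algebras $\mathcal{O}(Z)\hookrightarrow\mathcal{O}(G)$. Transporting it through $\Ind(\langle\mathcal{M}\rangle_\otimes)\simeq\Rep_k(G)$ and using $\mathcal{O}(G)\leftrightarrow\mathcal{P}$ yields a monomorphism of ring objects $\mathcal{S}\hookrightarrow\mathcal{P}$, i.e.\ of ID-rings over $\mathcal{R}$, with $\omega(\mathcal{S})\cong\mathcal{O}(Z)$. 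Finite generation of $\mathcal{O}(Z)$ over $k$ supplies a finite-dimensional $G$-subrepresentation $V\subset\mathcal{O}(Z)$ generating it as a $k$-algebra; pulling $V$ back gives an object $\mathcal{N}$ of $\langle\mathcal{M}\rangle_\otimes$ with a morphism $\mathcal{N}\to\mathcal{S}$ for which $\Sym^*(\mathcal{N})\to\mathcal{S}$ is surjective. Hence $\mathcal{S}$ satisfies Definition~\ref{defi-solution-algebra} and $\Spec(\omega(\mathcal{S}))\cong Z$.

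The only delicate point — and thus the main obstacle, to the extent there is one — is the morphism side just discussed: one must be confident that replacing ``morphisms compatible with a fixed $\iota$'' by ``all ID-ring homomorphisms'' does not break the equivalence, which is exactly what the Tannakian equivalence on Ind-categories guarantees, as it is insensitive to the choice of $\iota$. It then only remains to observe that the ring object $\mathcal{S}$ built in the essential surjectivity step is genuinely an ID-ring over $\mathcal{R}$ — because $\Ind(\langle\mathcal{M}\rangle_\otimes)$ is a full subcategory of the category of ID-modules over $\mathcal{R}$ — and that its underlying ring is an integral domain with constant field $k$, which follows from the chain $\mathcal{R}\subset\mathcal{S}\subset\mathcal{P}$ exactly as in Proposition~\ref{proposition-sol-alg-equivalent-characterization}.
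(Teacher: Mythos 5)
Your proposal is correct and follows the paper's intended route: the paper states this corollary as an immediate consequence of Theorem~\ref{proposition-solution-algebra-equiavalence} (giving no separate proof), and your argument is a careful unwinding of exactly that deduction. You rightly note that forgetting the embeddings $\iota$ and the distinguished $k$-points is not a formal categorical operation, and your direct appeal to the Tannakian equivalence $\Ind(\langle\mathcal{M}\rangle_\otimes)\simeq\Rep_k(G)$ to get full faithfulness on ring objects without reference to $\iota$, together with rerunning the essential-surjectivity construction, is precisely what makes the passage to the unpointed statement legitimate.
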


\par Now consider again the category equivalence $\langle \mathcal{M}\rangle_{\otimes} \to \langle \mathcal{M}_{\mathcal{K}} \rangle_{\otimes}$ given by Proposition \ref{proposition-fg-id-module-is-projective} (3).

\begin{corollary} The above equivalence restricts to an equivalence between the full subcategories of solution algebras for $\langle \mathcal{M} \rangle_{\otimes}$ and $\langle \mathcal{M}_{\mathcal{K}} \rangle_{\otimes}$. The quasi-inverse assigns the intersection $\mathcal{S}' \cap \mathcal{P}$ to a solution algebra $\mathcal{S}'$ for $\langle \mathcal{M}_{\mathcal{K}} \rangle_{\otimes}$.
\end{corollary}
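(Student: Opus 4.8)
The plan is to isolate one lemma and deduce everything from it: \emph{for a solution algebra $\mathcal{S}$ for $\langle\mathcal{M}\rangle_{\otimes}$ equipped with an embedding $\iota\colon\mathcal{S}\hookrightarrow\mathcal{P}$, one has $\mathcal{S}=\mathcal{S}_{\mathcal{K}}\cap\mathcal{P}$ inside $\mathcal{P}_{\mathcal{K}}$}, where $\mathcal{S}_{\mathcal{K}}:=\mathcal{S}\otimes_{\mathcal{R}}\mathcal{K}$ is viewed inside $\mathcal{P}_{\mathcal{K}}=\mathcal{P}\otimes_{\mathcal{R}}\mathcal{K}$ via $\iota\otimes_{\mathcal{R}}\mathcal{K}$ and $\mathcal{P}\subseteq\mathcal{P}_{\mathcal{K}}$ is the canonical inclusion (all these rings are domains, so the intersection takes place inside $\Frac(\mathcal{P})$). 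To prove the lemma I would set $\mathcal{T}:=\mathcal{S}_{\mathcal{K}}\cap\mathcal{P}$: this is an $\mathcal{R}$-subalgebra of $\mathcal{P}$ stable under the iterative derivation, hence a sub-ID-ring of $\mathcal{P}$; since $\mathcal{P}$ lies in $\Ind\langle\mathcal{M}\rangle_{\otimes}$ and the latter is a fully abelian tensor subcategory closed under subobjects (which follows from the proof of Theorem \ref{pvfibfun}, using Proposition \ref{proposition-fg-id-module-is-projective}), $\mathcal{T}$ is again an object of $\Ind\langle\mathcal{M}\rangle_{\otimes}$, with $\mathcal{S}\subseteq\mathcal{T}\subseteq\mathcal{P}$. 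From $\mathcal{S}\subseteq\mathcal{T}\subseteq\mathcal{S}_{\mathcal{K}}$ one gets $\mathcal{T}\otimes_{\mathcal{R}}\mathcal{K}=\mathcal{S}_{\mathcal{K}}$, so the monomorphism $\mathcal{S}_{\mathcal{K}}\to\mathcal{T}\otimes_{\mathcal{R}}\mathcal{K}$ is the identity. Now apply the fibre functor: by the compatibility $\omega_{\mathcal{P}_{\mathcal{K}}}\circ(-\otimes_{\mathcal{R}}\mathcal{K})\cong\omega_{\mathcal{P}}$ on $\Ind\langle\mathcal{M}\rangle_{\otimes}$ built into Corollary \ref{pv-localization}, $\omega_{\mathcal{P}}$ sends the inclusion $\mathcal{S}\hookrightarrow\mathcal{T}$ to an isomorphism; since $\omega_{\mathcal{P}}$ is faithful and exact on $\Ind\langle\mathcal{M}\rangle_{\otimes}$ it reflects isomorphisms, whence $\mathcal{S}=\mathcal{T}$.

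Granting the lemma, the two directions are straightforward. That $-\otimes_{\mathcal{R}}\mathcal{K}$ sends solution algebras for $\langle\mathcal{M}\rangle_{\otimes}$ to solution algebras for $\langle\mathcal{M}_{\mathcal{K}}\rangle_{\otimes}$ is immediate from Definition \ref{defi-solution-algebra}: flatness of $\mathcal{K}$ over $\mathcal{R}$ preserves injectivity of $\iota$, base change preserves the surjectivity of $\Sym^{*}(\mathcal{N})\to\mathcal{S}$, and it carries $\langle\mathcal{M}\rangle_{\otimes}$ into $\langle\mathcal{M}_{\mathcal{K}}\rangle_{\otimes}$ by Proposition \ref{proposition-fg-id-module-is-projective}(3). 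Conversely, given a solution algebra $\mathcal{S}'$ for $\langle\mathcal{M}_{\mathcal{K}}\rangle_{\otimes}$ with its embedding $\iota'\colon\mathcal{S}'\hookrightarrow\mathcal{P}_{\mathcal{K}}$, I would choose $\mathcal{N}'\in\langle\mathcal{M}_{\mathcal{K}}\rangle_{\otimes}$ with a morphism $\mathcal{N}'\to\mathcal{S}'$ whose image generates $\mathcal{S}'$ over $\mathcal{K}$, write $\mathcal{N}'\cong\mathcal{N}\otimes_{\mathcal{R}}\mathcal{K}$ with $\mathcal{N}\in\langle\mathcal{M}\rangle_{\otimes}$ (Proposition \ref{proposition-fg-id-module-is-projective}(3)), and use that $-\otimes_{\mathcal{R}}\mathcal{K}$ induces an equivalence $\Ind\langle\mathcal{M}\rangle_{\otimes}\to\Ind\langle\mathcal{M}_{\mathcal{K}}\rangle_{\otimes}$ to see that the composite $\mathcal{N}'\to\mathcal{S}'\xrightarrow{\iota'}\mathcal{P}_{\mathcal{K}}$ is the base change of a morphism $g\colon\mathcal{N}\to\mathcal{P}$. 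Let $\mathcal{S}_{1}\subseteq\mathcal{P}$ be the sub-ID-ring generated by the image of $g$; by construction it is a solution algebra for $\langle\mathcal{M}\rangle_{\otimes}$, and $\mathcal{S}_{1}\otimes_{\mathcal{R}}\mathcal{K}$ is the $\mathcal{K}$-subalgebra of $\mathcal{P}_{\mathcal{K}}$ generated by the image of $\mathcal{N}'$, namely $\mathcal{S}'$. The lemma then gives $\mathcal{S}_{1}=(\mathcal{S}_{1}\otimes_{\mathcal{R}}\mathcal{K})\cap\mathcal{P}=\mathcal{S}'\cap\mathcal{P}$; hence $\mathcal{S}'\cap\mathcal{P}$ is a solution algebra for $\langle\mathcal{M}\rangle_{\otimes}$ and $(\mathcal{S}'\cap\mathcal{P})\otimes_{\mathcal{R}}\mathcal{K}\cong\mathcal{S}'$.

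Putting these together, $-\otimes_{\mathcal{R}}\mathcal{K}$ restricts to an equivalence between the full subcategories of solution algebras, with quasi-inverse $\mathcal{S}'\mapsto\mathcal{S}'\cap\mathcal{P}$: the unit and counit isomorphisms are exactly $\mathcal{S}\xrightarrow{\sim}\mathcal{S}_{\mathcal{K}}\cap\mathcal{P}$ (the lemma) and $(\mathcal{S}'\cap\mathcal{P})\otimes_{\mathcal{R}}\mathcal{K}\xrightarrow{\sim}\mathcal{S}'$, and checking their naturality is routine. (One can also argue more abstractly that $-\otimes_{\mathcal{R}}\mathcal{K}$ is an equivalence on solution algebras by invoking Theorem \ref{proposition-solution-algebra-equiavalence} in both categories together with the identification of the two Galois group schemes in Corollary \ref{pv-localization}, and then use the lemma only to identify the quasi-inverse.) I expect the lemma to be the only real obstacle: the delicate points are recognizing $\mathcal{S}_{\mathcal{K}}\cap\mathcal{P}$ as an object of $\Ind\langle\mathcal{M}\rangle_{\otimes}$, and using faithful exactness of the fibre functor to promote the formal equality $\mathcal{T}\otimes_{\mathcal{R}}\mathcal{K}=\mathcal{S}_{\mathcal{K}}$ to $\mathcal{T}=\mathcal{S}$; one should also make explicit that $\mathcal{S}'$ comes with a canonical embedding into $\mathcal{P}_{\mathcal{K}}$, so that $\mathcal{S}'\cap\mathcal{P}$ is well defined.
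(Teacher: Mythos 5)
Your proposal is correct and follows the same route as the paper: the paper's own proof is a two-line assertion that both $\mathcal{S}'$ and $\mathcal{P}$ sit inside $\mathcal{P}_{\mathcal{K}}$ and that $\mathcal{S}'\mapsto\mathcal{S}'\cap\mathcal{P}$ is the quasi-inverse to base change, and your key lemma $\mathcal{S}=\mathcal{S}_{\mathcal{K}}\cap\mathcal{P}$ together with the descent of the generating morphism $\mathcal{N}'\to\mathcal{P}_{\mathcal{K}}$ is exactly the verification the paper leaves implicit. The details you supply (closure of $\Ind\langle\mathcal{M}\rangle_{\otimes}$ under subobjects, compatibility of fibre functors from Corollary \ref{pv-localization}, and reflection of isomorphisms by the faithful exact $\omega$) are all sound.
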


\begin{proof}
Given a Picard--Vessiot ring $\mathcal P$ for $\langle \mathcal{M} \rangle_{\otimes}$, a solution algebra $\mathcal{S}'$ for  $\langle \mathcal{M}_{\mathcal{K}} \rangle_{\otimes}$ is by definition a subring of $\mathcal P_{\mathcal K}$, and so is $\mathcal P$. A quasi-inverse to the base change functor $\mathcal{S}\mapsto \mathcal{S}_{\mathcal K}$ is thus given by $\mathcal{S}'\mapsto\mathcal{S}' \cap \mathcal{P}$.
\end{proof}

\par Consider now the ID-module $\mathcal{M}_{\mathcal{K}}$ over the ID-field $\mathcal{K}$.  We have the following generalization of Andr\'e's notion of solution fields for differential modules.

\begin{definition}
Let $\mathcal{L}{\mid}\mathcal{K}$ be an extension of ID-fields. We say that $\mathcal{L}$ is a {\em solution field} for $\langle \mathcal{M}_{\mathcal{K}} \rangle_{\otimes}$ if the constant field of $\mathcal{L}$ is $k$ and there exists an ID-module $\mathcal{N}_{\mathcal{K}}$ in $\langle \mathcal{M}_{\mathcal{K}} \rangle_{\otimes}$ and a morphism of ID-modules $\mathcal{N}_{\mathcal{K}} \to \mathcal{L}$ whose image generates the field extension $L{\mid}K$.
\end{definition}

\begin{proposition}\hfill

\begin{enumerate}
\item An ID-field extension $\mathcal{L}{\mid}\mathcal{K}$ is a solution field for $\langle \mathcal{M}_{\mathcal{K}} \rangle_{\otimes}$ if and only if it is the quotient field of a solution algebra $\mathcal{S}$ for $\langle \mathcal{M} \rangle_{\otimes}$.
\item Every solution field $\mathcal{L}$ for $\langle \mathcal{M}_{\mathcal{K}} \rangle_{\otimes}$ embeds as an intermediate ID-extension of $\mathcal{J}{\mid}\mathcal{K}$, where $\mathcal J$ is the quotient field of a Picard--Vessiot algebra for $\langle \mathcal{M}_{\mathcal{K}} \rangle_{\otimes}$.
\end{enumerate}
\end{proposition}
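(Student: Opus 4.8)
The plan is to reduce both assertions to three results already available: the intrinsic characterization of solution algebras in Proposition \ref{proposition-sol-alg-equivalent-characterization}, the base-change equivalence $\langle\mathcal{M}\rangle_\otimes\to\langle\mathcal{M}_{\mathcal{K}}\rangle_\otimes$ of Proposition \ref{proposition-fg-id-module-is-projective}~(3), and the localization correspondence of Corollary \ref{pv-localization}; throughout we use that the iterative derivation on a simple ID-domain extends uniquely to its fraction field (Proposition \ref{simple-id-ring}). For the ``if'' part of (1), let $\mathcal{S}$ be a solution algebra for $\langle\mathcal{M}\rangle_\otimes$ and set $\mathcal{L}:=\Frac(\mathcal{S})$ with its canonical ID-structure. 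By Proposition \ref{proposition-sol-alg-equivalent-characterization} the constant field of $\mathcal{L}$ is $k$, and there is an object $\mathcal{N}$ of $\langle\mathcal{M}\rangle_\otimes$ together with a morphism of ID-modules $\mathcal{N}\to\mathcal{S}$ over $\mathcal{R}$ whose image generates $S$ as an $R$-algebra. Put $\mathcal{N}_{\mathcal{K}}:=\mathcal{N}\otimes_{\mathcal{R}}\mathcal{K}$, an object of $\langle\mathcal{M}_{\mathcal{K}}\rangle_\otimes$; the composite $\mathcal{N}\to\mathcal{S}\hookrightarrow\mathcal{L}$ extends $K$-linearly to a morphism $\mathcal{N}_{\mathcal{K}}\to\mathcal{L}$ of ID-modules over $\mathcal{K}$ whose image is the $K$-span of the image of $\mathcal{N}$, and therefore generates over $K$ the field $\Frac(\mathcal{S})=L$. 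Hence $\mathcal{L}$ is a solution field for $\langle\mathcal{M}_{\mathcal{K}}\rangle_\otimes$.

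For the converse, suppose $\mathcal{L}\mid\mathcal{K}$ is a solution field, witnessed by an object $\mathcal{N}_{\mathcal{K}}$ of $\langle\mathcal{M}_{\mathcal{K}}\rangle_\otimes$ and a morphism $f\colon\mathcal{N}_{\mathcal{K}}\to\mathcal{L}$ of ID-modules over $\mathcal{K}$ whose image generates $L\mid K$. By Proposition \ref{proposition-fg-id-module-is-projective}~(3) we may write $\mathcal{N}_{\mathcal{K}}\cong\mathcal{N}\otimes_{\mathcal{R}}\mathcal{K}$ for an object $\mathcal{N}$ of $\langle\mathcal{M}\rangle_\otimes$. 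Precomposing $f$ with the canonical ID-morphism $\mathcal{N}\to\mathcal{N}_{\mathcal{K}}$ over $\mathcal{R}$ yields a morphism of ID-modules $\mathcal{N}\to\mathcal{L}$ over $\mathcal{R}$ whose image is an ID-submodule of $\mathcal{L}$; let $\mathcal{S}\subseteq\mathcal{L}$ be the $R$-subalgebra it generates. The iterative Leibniz rule (axiom (2) for ID-modules and ID-rings) shows that $\mathcal{S}$ is an ID-subring of $\mathcal{L}$; it is an integral domain with $\Frac(\mathcal{S})=L$, and its constant field, being contained in that of $\mathcal{L}$, equals $k$. Since $\mathcal{N}\to\mathcal{S}$ has image generating $S$ over $R$, Proposition \ref{proposition-sol-alg-equivalent-characterization} identifies $\mathcal{S}$ as a solution algebra for $\langle\mathcal{M}\rangle_\otimes$ with fraction field $\mathcal{L}$, completing the proof of (1).

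Statement (2) then follows at once: given a solution field $\mathcal{L}$, part (1) exhibits it as $\Frac(\mathcal{S})$ for a solution algebra $\mathcal{S}$ for $\langle\mathcal{M}\rangle_\otimes$, and Definition \ref{defi-solution-algebra} provides an ID-embedding $\iota\colon\mathcal{S}\hookrightarrow\mathcal{P}$ into a Picard--Vessiot ring for $\langle\mathcal{M}\rangle_\otimes$. Passing to fraction fields turns the chain of ID-domains $\mathcal{R}\subseteq\mathcal{S}\subseteq\mathcal{P}$ into a chain of ID-fields $\mathcal{K}\subseteq\mathcal{L}\subseteq\Frac(\mathcal{P})$. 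By Corollary \ref{pv-localization} the localization $\mathcal{P}_{\mathcal{K}}=\mathcal{P}\otimes_{\mathcal{R}}\mathcal{K}$ is a Picard--Vessiot algebra for $\langle\mathcal{M}_{\mathcal{K}}\rangle_\otimes$, and since it is the localization of the domain $\mathcal{P}$ at $R\setminus\{0\}$ it has the same fraction field; taking $\mathcal{J}:=\Frac(\mathcal{P}_{\mathcal{K}})=\Frac(\mathcal{P})$ realizes $\mathcal{L}$ as an intermediate ID-extension of $\mathcal{J}\mid\mathcal{K}$.

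I do not expect a genuine obstacle here: the content is essentially bookkeeping once the abstract machinery is in place. The two points that need a little care are that the condition ``the image generates'' is preserved when passing between $\langle\mathcal{M}\rangle_\otimes$ and $\langle\mathcal{M}_{\mathcal{K}}\rangle_\otimes$ under base change, and that the $R$-subalgebra generated by an ID-submodule of a field is again iterative-differential — precisely the step where the Leibniz-type axiom enters.
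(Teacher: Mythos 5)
Your argument is correct and follows essentially the same route as the paper: the authors also deduce (1) from Proposition \ref{proposition-sol-alg-equivalent-characterization} together with the base-change equivalence between solution algebras over $\mathcal{R}$ and over $\mathcal{K}$, and obtain (2) directly from the embedding $\mathcal{S}\hookrightarrow\mathcal{P}$ required by Definition \ref{defi-solution-algebra}. You merely spell out the bookkeeping (descending $\mathcal{N}_{\mathcal{K}}$ to $\mathcal{N}$ and checking that the generated subalgebra is an ID-subring via the Leibniz axiom) that the paper leaves implicit.
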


In accordance with the terminology of \cite{MR1978401}, we call an ID-field $\mathcal{J}$ as above a Picard--Vessiot field.

\begin{proof} Statement (1) follows from the Proposition \ref{proposition-sol-alg-equivalent-characterization} and the previous corollary. Statement (2) is a consequence of the definition of solution algebras.
\end{proof}

\par We next develop the Galois theory of solution fields. Given an ID-module $\mathcal M$ over $\mathcal R$, Corollaries \ref{aut-galois} and \ref{pv-localization} tell us that the Galois group scheme $G$  associated with a Picard--Vessiot ring $\mathcal P$ for $\mathcal M$ represents the $k$-group functor of ID-automorphisms of $\mathcal{P}_{\mathcal{K}}$ over $\mathcal{K}$. Using this representation we can naturally extend the action of $G$ to the fraction field $\mathcal{J}$  of $\mathcal{P}_{\mathcal{K}}$. An element $p/q \in \mathcal{J}$ is called invariant under a closed subgroup scheme $H$ of $G$ if for all $k$-algebras $k'$ and all $h \in H(k')$ we have an equality
\[
h(p \otimes 1) \cdot (q \otimes 1) = (p \otimes 1) \cdot h(q \otimes 1)\]
in $\mathcal{P}_{\mathcal{K}} \otimes_k k'$.
The set of invariant elements of $\mathcal{J}$ under $H$ is denoted by $\mathcal{J}^{H}$.
Recall now that there is the following iterative differential Galois correspondence:

\begin{theorem}[\cite{MR2657686}, Theorem 11.5]\label{galcor}
 The  map $H \mapsto \mathcal{J}^{H}$ gives an order-reversing bijection between  closed subgroup schemes $H$ of $G$ and intermediate ID-fields of $\mathcal{J}{\mid}\mathcal{K}$.
\end{theorem}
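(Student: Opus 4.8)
The plan is to establish the bijection by the classical Picard--Vessiot recipe, producing an order-reversing map in the opposite direction, $\mathcal{F}\mapsto H_{\mathcal{F}}$, which sends an intermediate ID-field $\mathcal{K}\subseteq\mathcal{F}\subseteq\mathcal{J}$ to the Galois group scheme of $\mathcal{J}$ over $\mathcal{F}$, and then checking that the two assignments are mutually inverse. The routine points come first: since $G$ acts on $\mathcal{J}$ by ID-automorphisms fixing $\mathcal{K}$ pointwise, the invariant set $\mathcal{J}^{H}$ is an intermediate ID-field, and both $H\mapsto\mathcal{J}^{H}$ and $\mathcal{F}\mapsto H_{\mathcal{F}}$ visibly reverse inclusions.

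To construct $H_{\mathcal{F}}$ one first shows that the compositum $\mathcal{P}_{\mathcal{F}}:=\mathcal{F}\cdot\mathcal{P}_{\mathcal{K}}$, formed inside $\mathcal{J}$, is a Picard--Vessiot ring over $\mathcal{F}$ for $\mathcal{M}_{\mathcal{F}}:=\mathcal{M}_{\mathcal{K}}\otimes_{\mathcal{K}}\mathcal{F}$ in the sense of Definition \ref{definition-p-v-ring}: it trivializes $\mathcal{M}_{\mathcal{F}}$ because $\mathcal{P}_{\mathcal{K}}$ already trivializes $\mathcal{M}_{\mathcal{K}}$, its constant field is $k$ because it sits inside $\mathcal{J}$ whose constant field is $k$, and it is ID-simple and minimal. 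Granting this, Corollary \ref{aut-galois} applied over $\mathcal{F}$ shows that $H_{\mathcal{F}}:=\underline{\Aut}^{\mathrm{ID}}(\mathcal{P}_{\mathcal{F}}{\mid}\mathcal{F})$ is an affine group scheme of finite type over $k$; restricting automorphisms along $\mathcal{K}\subseteq\mathcal{F}$ embeds it into $G$ as a monomorphism (an ID-automorphism trivial on $\mathcal{P}_{\mathcal{K}}$ is trivial on $\mathcal{F}\cdot\mathcal{P}_{\mathcal{K}}$ by $\mathcal{F}$-linearity), hence as a closed subgroup scheme, since a monomorphism of affine group schemes of finite type over a field is a closed immersion.

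The substantive content is the pair of identities $\mathcal{J}^{H_{\mathcal{F}}}=\mathcal{F}$ and $H_{\mathcal{J}^{H}}=H$. For the first, I would invoke Remark \ref{pvremas} (1) over the base $\mathcal{F}$ (with the forgetful functor to $\mathcal{F}$-modules as pointing functor): $\Spec(\mathcal{P}_{\mathcal{F}})$ is an $H_{\mathcal{F}}$-torsor over $\Spec(\mathcal{F})$, so there is an isomorphism $\mathcal{P}_{\mathcal{F}}\otimes_{\mathcal{F}}\mathcal{P}_{\mathcal{F}}\cong\mathcal{P}_{\mathcal{F}}\otimes_{k}\mathcal{O}(H_{\mathcal{F}})$ carrying the first structure map to $p\mapsto p\otimes 1$ and the second to the coaction. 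Under it, the invariance condition on an element $p/q\in\mathcal{J}$ becomes the equality $q\otimes p=p\otimes q$ in $\mathcal{P}_{\mathcal{F}}\otimes_{\mathcal{F}}\mathcal{P}_{\mathcal{F}}$, which then holds in $\mathcal{J}\otimes_{\mathcal{F}}\mathcal{J}$; dividing by the unit $q\otimes q$ there yields $(p/q)\otimes 1=1\otimes(p/q)$, and faithfully flat descent along $\mathcal{F}\to\mathcal{J}$, which identifies the equalizer of $\mathcal{J}\rightrightarrows\mathcal{J}\otimes_{\mathcal{F}}\mathcal{J}$ with $\mathcal{F}$, forces $p/q\in\mathcal{F}$. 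For the converse, given a closed subgroup scheme $H\subseteq G$ and $\mathcal{F}:=\mathcal{J}^{H}$, the inclusion $H\subseteq H_{\mathcal{F}}$ is clear, and both subgroups have the same invariant subalgebra $\mathcal{P}_{\mathcal{F}}^{H}=\mathcal{F}=\mathcal{P}_{\mathcal{F}}^{H_{\mathcal{F}}}$ in the $H_{\mathcal{F}}$-torsor $\mathcal{P}_{\mathcal{F}}$; one then has to deduce $H=H_{\mathcal{F}}$, i.e.\ that a closed subgroup scheme of $H_{\mathcal{F}}$ is recovered from the subalgebra of $\mathcal{P}_{\mathcal{F}}$ it leaves pointwise fixed.

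This last recovery step is the main obstacle, and it is exactly where the non-reducedness we allow makes a difference: in characteristic $0$ over an algebraically closed field one may test the equality $H=H_{\mathcal{F}}$ on $k$-points and argue with cosets, but for possibly infinitesimal group schemes one must instead work with the Hopf algebra $\mathcal{O}(H_{\mathcal{F}})$, the quotient $H_{\mathcal{F}}/H$, and the faithful flatness of $\mathcal{O}(H_{\mathcal{F}})$ over $\mathcal{O}(H_{\mathcal{F}})^{H}$; one must also take care, already in the preliminary step, that $\mathcal{P}_{\mathcal{F}}$ stays ID-simple with no new constants when the relevant extensions are inseparable. These inseparable phenomena are precisely what \cite{MR2657686} treats, which is why we quote the correspondence from there rather than reprove it, the separable case being already contained in \cite{MR1978401}.
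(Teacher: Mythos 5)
The paper gives no proof of this statement at all: it is quoted verbatim from Maurischat (\cite{MR2657686}, Theorem 11.5), with only the remark that the reference's more general setting of iterable higher derivations specializes to ID-modules and that no separability hypothesis is needed there. Your sketch is a sound outline of the standard Galois-correspondence strategy and correctly isolates the two genuinely hard points --- that the compositum $\mathcal{F}\cdot\mathcal{P}_{\mathcal{K}}$ remains ID-simple with constants $k$ over an intermediate field in the presence of inseparability, and that a possibly non-reduced closed subgroup scheme is recovered from its invariant subalgebra --- both of which you, like the paper, ultimately defer to the same reference, so your proposal is consistent with and no less complete than what the paper does.
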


\par The above theorem is stated more generally for fields equipped with an iterable higher derivation in the reference, but in particular it applies to ID-modules as defined above. Note the important point that (in contrast to what is implicitly assumed in \cite{MR1978401}) in \cite{MR2657686} no separability assumption is made on Picard--Vessiot extensions.

\par In characteristic zero Andr\'e proved that solution fields correspond to observable subgroups of the Galois group. Here we need a slightly more general notion allowing non-reduced group schemes.
Namely, we call a closed subgroup scheme $H$ of an affine group scheme $G$ of finite type over a field $k$ {\em observable} if the quotient $G/H$ is quasi-affine over $k$.

We have the following equivalent characterizations of observable subgroups.

\begin{proposition}\label{theorem-observable-subgroup-equi}
Let $G$ be an affine group scheme of finite type over a field $k$, and let $H$ be a closed subgroup scheme of $G$. Then the following are equivalent:
\begin{enumerate}
\item The subgroup scheme $H$ is observable.
\item Every finite-dimensional $H$-representation is an $H$-subrepresentation of a finite dimensional $G$-representation.
\item There exists a finite dimensional $G$-representation $V$ and a vector $v \in V$ such that $H$ is the stabilizer subgroup scheme of the vector $v$ in $G$.
\end{enumerate}
\end{proposition}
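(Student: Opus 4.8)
The plan is to prove the cycle of implications $(1)\Rightarrow(2)\Rightarrow(3)\Rightarrow(1)$. These refine, to possibly non-reduced group schemes over an arbitrary field, the classical equivalences characterizing observable subgroups of reduced algebraic groups, and the arguments are adaptations of the classical ones, with extra care taken because we do not assume $G$ or $H$ reduced.

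I would begin with the easy implication $(3)\Rightarrow(1)$. Suppose $H=\Stab_G(v)$ for a vector $v$ in a finite-dimensional $G$-representation $V$, and write $\mathbb{V}:=\Spec(\Sym^*(V^{\vee}))$ for the affine $G$-scheme underlying $V$. The orbit morphism $G\to\mathbb{V}$, $g\mapsto g\cdot v$, is invariant under right translation by $H$ and hence factors through a morphism $G/H\to\mathbb{V}$. By the general theory of orbits of group scheme actions (see \cite{MR0302656}, III, \S3) this factored morphism is an immersion with locally closed image; since a locally closed subscheme of an affine scheme is quasi-affine, $G/H$ is quasi-affine, which is $(1)$.

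For $(1)\Rightarrow(2)$ I would argue geometrically. Let $W$ be a finite-dimensional $H$-representation. As $G\to G/H$ is an fppf $H$-torsor, the associated-bundle construction produces a $G$-equivariant vector bundle $\mathcal{L}_W=G\times^H W$ on $G/H$ whose fibre over the base point $eH$ is $W$ with its given $H$-action; its module of global sections is the induced representation $\Ind_H^G W$, which is locally finite. Since $G/H$ is quasi-affine and noetherian, every coherent sheaf on it is generated by finitely many global sections, so $\mathcal{L}_W$ is generated by a finite-dimensional $G$-subrepresentation $E\subseteq\Ind_H^G W$. Evaluating sections at the $H$-fixed point $eH$ then yields a surjection of $H$-representations $E\twoheadrightarrow W$; dualizing, and replacing the arbitrary $W$ by $W^{\vee}$, one concludes that every finite-dimensional $H$-representation embeds as an $H$-subrepresentation of a finite-dimensional $G$-representation, which is $(2)$. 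This is the step I expect to be the main obstacle: one must verify that the associated-bundle construction, the global generation of coherent sheaves on a quasi-affine noetherian base, and the local finiteness and $G$-equivariance of the section module all remain valid for possibly non-reduced $G$ and $H$, whereas the classical sources assume reducedness throughout.

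Finally, for $(2)\Rightarrow(3)$ I would invoke the scheme-theoretic form of Chevalley's theorem on line stabilizers: there is a finite-dimensional $G$-representation $W_0$ and a line $D=kd\subseteq W_0$ with $H=\Stab_G(D)$, on which $H$ acts through a character $\chi\colon H\to\mathbb{G}_m$. Applying $(2)$ to the one-dimensional $H$-representation of character $\chi^{-1}$ gives a finite-dimensional $G$-representation $E$ containing a vector $u$ with $h\cdot u=\chi(h)^{-1}u$ for all $h\in H$. Set $V:=W_0\otimes_kE$ and $v:=d\otimes u$; a computation with the diagonal action gives $H\subseteq\Stab_G(v)$, and for the reverse inclusion one fixes $\xi\in E^{\vee}$ with $\langle\xi,u\rangle=1$ and exploits the compatibility of the contraction map $\id_{W_0}\otimes\langle\xi,-\rangle\colon V\to W_0$ with the contragredient $G$-action to see that any $g\in\Stab_G(v)$ satisfies $g\cdot d=\langle\xi,g^{-1}u\rangle\,d$, a unit multiple of $d$; hence $g$ stabilizes $D$ and therefore lies in $H$. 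Thus $H=\Stab_G(v)$, which is $(3)$, closing the cycle. One should again take care to use Chevalley's theorem in its scheme-theoretic form, valid for arbitrary closed subgroup schemes.
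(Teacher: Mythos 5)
Your proposal is correct, and the mathematical content coincides with what the paper relies on; the difference is organizational and in the level of detail. The paper disposes of $(1)\Leftrightarrow(2)$ by citing Theorem 1.3 of \cite{MR2891209}, of $(2)\Rightarrow(3)$ by pointing to the argument for $(7)\Rightarrow(2)$ in Theorem 2.1 of \cite{MR1489234} (which is exactly your Chevalley-plus-character-untwisting step, including the contraction trick to get the reverse inclusion $\Stab_G(v)\subseteq\Stab_G(D)$), and proves $(3)\Rightarrow(1)$ directly via the orbit immersion into $\mathbb V$ and the fact that an orbit is open in its closure (\cite{MR0302656}, \S III.3 and \S I.2), which is word for word your argument. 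What you do differently is (a) arrange the statements in a single cycle $(1)\Rightarrow(2)\Rightarrow(3)\Rightarrow(1)$, so you only need the forward direction of the cited equivalence, and (b) actually supply the proof of $(1)\Rightarrow(2)$ via the associated bundle $G\times^H W$, global generation of quasi-coherent sheaves on a noetherian quasi-affine scheme, and evaluation of a finite-dimensional $G$-stable space of sections at the base point; this is essentially the content of the reference, made self-contained and visibly valid for non-reduced $G$ and $H$. The only points worth being careful about in a full writeup are the ones you already flag: that $\Gamma(G/H,G\times^H W)\cong\Ind_H^GW$ with its locally finite $G$-action is available for fppf torsors under arbitrary affine group schemes, and that in the $(2)\Rightarrow(3)$ step the scalar $\langle\xi,g^{-1}u\rangle$ is a unit (which follows from pairing the relation for $g$ with the one for $g^{-1}$), so that $g$ genuinely lies in the scheme-theoretic line stabilizer. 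Neither is a gap, and your proof buys a self-contained argument at the cost of a little extra length.
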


\begin{proof}
The proof of equivalence $(1) \Leftrightarrow (2)$ is Theorem 1.3 in \cite{MR2891209}, and the proof of $(2) \Rightarrow (3)$ goes in the same way as the proof of implication $(7) \Rightarrow (2)$ in Theorem 2.1 of \cite{MR1489234}.
For the proof of $(3) \Rightarrow (1)$, we can use condition (3) together with (\cite{MR0302656}, \S III.3, Proposition 5.2) to construct an immersion $\iota$ of the quotient $G/H$ in the affine bundle $\mathbb{V}:=\Spec(\Sym^*(V))$. By (\cite{MR0302656}, \S I.2, Proposition 5.2), the image $\iota(G/H)$ is open in its closure, hence quasi-affine.
\end{proof}

We can now state the following generalization of (\cite{MR3215927}, Theorem 4.2.3 (3)) to iterative differential fields.

\begin{theorem}\label{idobs} In the situation of Theorem \ref{galcor} an intermediate ID-extension $\mathcal{L}$ of $\mathcal{J}{\mid}\mathcal{K}$ is a solution field for $\langle \mathcal{M}_{\mathcal{K}} \rangle_{\otimes}$ if and only if the corresponding subgroup scheme $H$ is an observable subgroup scheme of the Galois group scheme $G$.
\end{theorem}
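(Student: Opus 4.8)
The plan is to push the whole statement through the neutral fibre functor $\omega=\omega_{\mathcal P}$ of a Picard--Vessiot ring $\mathcal P$ for $\langle\mathcal M_{\mathcal K}\rangle_\otimes$ (so $\mathcal J=\Frac(\mathcal P_{\mathcal K})$ and $G=\underline{\Aut}^\otimes(\omega)$ acts on $\mathcal J$ by Corollaries \ref{aut-galois} and \ref{pv-localization}) and to combine the iterative differential Galois correspondence of Theorem \ref{galcor} with the characterisations of observable subgroups in Proposition \ref{theorem-observable-subgroup-equi}. The technical core is a dictionary between ID-morphisms into $\mathcal J$ and vectors in representations of $G$. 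Concretely, for $\mathcal N$ in $\langle\mathcal M_{\mathcal K}\rangle_\otimes$ the object $\mathcal N$ is solvable in $\mathcal P_{\mathcal K}$ and $\mathcal J$ has ring of constants $k$, so the ID-$\mathcal J$-module $\mathcal N^\vee\otimes_{\mathcal K}\mathcal J$ is trivial with $\nabla$-space $\omega(\mathcal N^\vee)=\omega(\mathcal N)^\vee$; by adjunction and dualisation this gives a natural isomorphism between the space $\Hom(\mathcal N,\mathcal J)$ of ID-morphisms of $\mathcal K$-ID-modules and $\omega(\mathcal N)^\vee$. Comparing $k$-dimensions with Remark \ref{rewrite} (which yields $\dim_k\Hom(\mathcal N,\mathcal P_{\mathcal K})=\dim_k\omega(\mathcal N)$) shows moreover that $\mathcal P_{\mathcal K}\hookrightarrow\mathcal J$ induces an isomorphism $\Hom(\mathcal N,\mathcal P_{\mathcal K})\cong\Hom(\mathcal N,\mathcal J)$, i.e. every ID-morphism $\mathcal N\to\mathcal J$ already takes values in $\mathcal P_{\mathcal K}$ (its values are genuine ``solutions''). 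Finally --- the one point requiring a routine but genuine verification --- this isomorphism is $G$-equivariant, the action on $\Hom(\mathcal N,\mathcal J)$ being post-composition with the Galois action on $\mathcal J$ and that on $\omega(\mathcal N)^\vee$ the dual of the tautological one, and it is compatible with base change along any extension $k\to k'$.

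\textbf{Key identification.} Given an ID-morphism $\phi\colon\mathcal N\to\mathcal J$, let $\bar\phi\in\omega(\mathcal N)^\vee$ be the corresponding vector and let $\mathcal L_\phi$ be the intermediate ID-field of $\mathcal J\mid\mathcal K$ generated by the image $\phi(\mathcal N)$ (the subfield generated by an ID-submodule is automatically closed under the iterative derivation). Then the subgroup scheme of $G$ corresponding to $\mathcal L_\phi$ under Theorem \ref{galcor} is $\Stab_G(\bar\phi)$. Indeed, by the dictionary applied over every $k$-algebra $k'$, an element of $G(k')$ fixes $\bar\phi$ exactly when it fixes every element of $\phi(\mathcal N)\subset\mathcal P_{\mathcal K}$; hence $\Stab_G(\bar\phi)$ fixes $\mathcal L_\phi$, so $\mathcal L_\phi\subseteq\mathcal J^{\Stab_G(\bar\phi)}$. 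Conversely, writing $H'$ for the subgroup scheme with $\mathcal J^{H'}=\mathcal L_\phi$, the group $H'$ fixes $\mathcal L_\phi\supseteq\phi(\mathcal N)$ and therefore fixes $\bar\phi$, so $H'\subseteq\Stab_G(\bar\phi)$; together with the previous inclusion and the order-reversal in Theorem \ref{galcor}, this forces $H'=\Stab_G(\bar\phi)$.

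\textbf{The two implications.} Suppose first $\mathcal L$ is a solution field, so $\mathcal J^H=\mathcal L$ for the subgroup scheme $H$ in the statement. By definition there are $\mathcal N$ in $\langle\mathcal M_{\mathcal K}\rangle_\otimes$ and an ID-morphism $\phi\colon\mathcal N\to\mathcal L\hookrightarrow\mathcal J$ whose image generates $L\mid K$, i.e. $\mathcal L=\mathcal L_\phi$; by the key identification $H=\Stab_G(\bar\phi)$, which is observable by Proposition \ref{theorem-observable-subgroup-equi}, $(3)\Rightarrow(1)$. Conversely, suppose $H$ is observable. By Proposition \ref{theorem-observable-subgroup-equi}, $(1)\Rightarrow(3)$, we may write $H=\Stab_G(v)$ for a vector $v$ in a finite-dimensional $G$-representation $W$. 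Let $\mathcal N$ be the object of $\langle\mathcal M_{\mathcal K}\rangle_\otimes$ with $\omega(\mathcal N)\cong W^\vee$, so that $W\cong\omega(\mathcal N)^\vee\cong\Hom(\mathcal N,\mathcal J)$, and let $\phi_v$ be the ID-morphism corresponding to $v$. Then $\mathcal L_{\phi_v}$ has ring of constants $k$ and is generated over $\mathcal K$ by the image of $\phi_v$, so it is a solution field; by the key identification it corresponds to $\Stab_G(v)=H$ under Theorem \ref{galcor}, i.e. $\mathcal J^H=\mathcal L_{\phi_v}$. Since also $\mathcal J^H=\mathcal L$ and the correspondence is bijective, $\mathcal L=\mathcal L_{\phi_v}$ is a solution field.

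\textbf{Main obstacle.} The real work is the $G$-equivariance and base-change compatibility asserted at the end of the first paragraph: without knowing that $\Hom(\mathcal N,\mathcal J)\cong\omega(\mathcal N)^\vee$ intertwines the Galois action with the tautological Tannakian action over every $k'$, one cannot match the geometric stabiliser subgroup schemes $\Stab_G(\bar\phi)$ with the groups produced by the (scheme-theoretically defined) Galois correspondence. Everything else is formal. One could alternatively route the ``only if'' direction through Theorem \ref{proposition-solution-algebra-equiavalence}: a solution field is $\Frac(\mathcal S)$ for a solution algebra $\mathcal S$, and $\Spec\omega(\mathcal S)$ is an affine quasi-homogeneous $G$-scheme with open --- hence quasi-affine --- dense orbit $G/\Stab_G(z)$, so observability is manifest; but one must still identify $\Stab_G(z)$ with $H$, which is precisely the computation above.
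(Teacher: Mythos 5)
Your proposal follows essentially the same route as the paper's proof: both directions reduce, via Proposition \ref{theorem-observable-subgroup-equi} and the identification of Remark \ref{rewrite} between vectors of $\omega(\mathcal{N}_{\mathcal K}^{\vee})$ and ID-morphisms $\mathcal{N}_{\mathcal K}\to\mathcal{P}_{\mathcal K}\to\mathcal{J}$, to matching the stabilizer $\Stab_G(v)$ with the subgroup scheme attached to the generated subfield by Theorem \ref{galcor}. The paper asserts this matching "by construction," whereas you usefully spell out the two details it leaves implicit (that every ID-morphism into $\mathcal{J}$ already lands in $\mathcal{P}_{\mathcal K}$, and the two-inclusion argument via order-reversal); the argument is correct.
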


\begin{proof}
Let $H$ be an observable subgroup scheme of $G$. By Proposition \ref{theorem-observable-subgroup-equi} (3) there exists a finite-dimensional $G$-representation $V$ and a vector $v \in V$ such that $H$ is the isotropy subgroup scheme of $v$ in $V$. Recall that $\mathcal{J}$ was defined as the quotient field of some Picard--Vessiot algebra $\mathcal{P}_{\mathcal{K}}$ for $\langle \mathcal{M}_{\mathcal{K}} \rangle_{\otimes}$, and denote by $\omega$ the associated fibre functor. Using the Tannakian equivalence induced by $\omega$, we can write $V$ as $\omega(\mathcal{N}_{\mathcal{K}}^{\vee})$ for some ID-module $\mathcal{N}_{\mathcal{K}}$ in $\langle \mathcal{M}_{\mathcal{K}} \rangle_{\otimes}$. By Remark \ref{rewrite} the vector $v$ determines an ID-homomorphism $v \colon \mathcal{N}_{\mathcal{K}} \to \mathcal{P}_{\mathcal{K}} \to \mathcal{J}$. Let $\mathcal{L}$ be the subfield of $\mathcal{J}$ generated by the image of this ID-homomorphism. By construction, $H$ is exactly the closed subgroup scheme of $G$ fixing $\mathcal{L}$ in its action on $\mathcal{J}$.

\par Conversely, if $\mathcal{L}$ is a solution field generated by a solution $v:\, \mathcal{N}_{\mathcal{K}}\to \mathcal{L}$, then the subgroup scheme $H$ attached to $\mathcal{L}$ by Theorem \ref{galcor} is the isotropy subgroup scheme of the solution $v$ in $\omega(\mathcal{N}_{\mathcal{K}}^{\vee})$, and hence $H$ is observable.
\end{proof}

\section{An example}

In this section we show that there are Picard--Vessiot extension of ID-fields giving rise to solution fields corresponding to non-reduced and non-normal subgroup schemes of the Galois group scheme.

Let $k$ be an algebraically closed field of prime characteristic $p \neq 2$. We define an iterative derivation on the polynomial ring $k[t]$ by setting $\partial_i(t^k) = \binom{k}{i} t^{k-i}$ and extending it linearly to the whole polynomial ring. Since $\partial_k(t^k) = 1$, we get that $k[t]$ is a simple ID-ring with this iterative derivation. Furthermore, the field of constants is precisely $k \subseteq k[t]$. The iterative derivation can be extended to the quotient field $k(t)$ of the polynomial ring $k[t]$.
\par Note that iterative derivations (resp. connections) are determined by the $p$-th power maps $\partial_{p^n}$ (resp. $\nabla_{p^n}$): if we write $n$ as the sum $a_0 + a_1 p + \ldots  + a_m p^m$, where $a_i \in \{0, 1, \ldots, p - 1 \}$, then
\[
(\partial_1)^{a_0} \circ (\partial_p)^{a_1} \circ \ldots \circ (\partial_{p^m})^{a_m} = c \cdot \partial_n,
\]
where $c$ is a non-zero element of $\mathbb{F}_p$. We now consider the following example.

\begin{example}\label{exidsol} \hfill
Let $\mathcal{M}$ be the ID-module corresponding to the sequence of equations
\[
\partial_{p^n}\begin{pmatrix} y_1 \\ y_2 \end{pmatrix} = \begin{pmatrix} 0 & 1 \\ 0 & a_n t^{-p^{n}} \end{pmatrix} \begin{pmatrix} y_1 \\ y_2 \end{pmatrix},
\]
where $a_n \in \{1, \ldots, p-1 \}$. Let $\textbf{s} = (s_1, s_2)^{T}$ be a non-trivial solution of this iterative differential equation. First, we see that $s_2$ is a solution of the iterative differential equation
\[
\partial_{p^n}(y) = a_n t^{-p^{n}} y,
\]
hence by (\cite{matzat-diff_gal_theory_pos_char}, Theorem 3.13) or (\cite{MR1978401}, Section 4) after a suitable choice of the coefficients $a_n$ we get that $s_2$ is transcendental over $k(t)$ and $\underline{Aut}^{\ID}(k[t][s_2]{\mid}k[t])=\mathbb{G}_m$. The multiplicative group $\mathbb{G}_m$ is thus a quotient of the differential Galois group
\[
G = \underline{Aut}^{\ID}(k[t][s_1, s_2]{\mid}k[t]).
\]
\par Moreover, for every element $h$ of the differential Galois group that fixes $s_2$, the element $h(s_1) - s_1$ is a constant, since
\[
\partial_{p^n}(h(s_1) - s_1) = h(\partial_{p^n}(s_1)) - \partial_{p^n}(s_1) = h(s_2) - s_2 = 0.
\]
This implies that $\underline{Aut}^{\ID}(k[t][s_1, s_2]{\mid}k[t][s_2])$ is a subgroup of the additive group $\mathbb{G}_a$. In conclusion, the differential Galois group is a closed subgroup scheme of $\mathbb{G}_m \ltimes \mathbb{G}_a$. The only subgroups of $\mathbb{G}_a$ that are stable under the action of the multiplicative group are the Frobenius kernels, but the differential Galois group must be reduced by (\cite{MR2657686} Corollary 11.7), thus $G$ is either $\mathbb{G}_m$ or $\mathbb{G}_m \ltimes \mathbb{G}_a$.
\par We now show that for a suitable choice of the $a_i$ the element $s_1$ is transcendental over $k(t)(s_2)$, which will imply that the differential Galois group is indeed the whole $\mathbb{G}_m \ltimes \mathbb{G}_a$. Assume this is not the case, and let $s_1 = b_0 + b_1 s_2 + b_2 s_2^2 + \ldots$ be a polynomial with coefficients in $k(t)$. Applying $\partial_1$, we get that
\begin{align*}
s_2 &= \partial_1(s_1) = \partial_1(b_0 + b_1 s_2 + b_2 s_2^2 + \ldots) = \\
&= \partial_1(b_0) + \partial_1(b_1)s_2 + b_1 a_0 t^{-1} s_2 + \textrm{second or higher order terms in }s_2.
\end{align*}
Therefore the element $b_1 \in k(t)$ must satisfy the  equation
\[
\partial_1(b_1) = 1 - a_0 t^{-1} b_1.
\]
Write $b_1$ as $f/g$, where $f, g \in k[t]$ are relatively prime polynomials. The previous equation can now be rewritten as
\begin{equation}
t \partial_1(f) g - t f \partial_1(g) = t g^2 - a_0 f g \label{equation-1}.
\end{equation}
Since $f$ and $g$ are relatively prime, the equation can hold only if $g$ divides $t \partial_1(g)$. This can happen either if $g = t \partial_1(g)$ or $\partial_1(g) = 0$. We shall obtain a contradiction by showing that neither case can happen for suitable $a_i$.
\par Let $g = t \partial_1(g)$: substituting back this identity to Equation (\ref{equation-1}) and dividing by $g$, we get that
\[
t \partial_1(f) - f = t g - a_0 f.
\]
If we assume that $a_0 \neq 1$, then we see that $t$ divides $f$, but it also divides $g$, which is a contradiction as $f$ and $g$ are relatively prime.
\par Let $\partial_1(g) = 0$: this implies that $g(t) = g_1(t^p)$ for some polynomial $g_1\in k[t]$. Again after substituting and dividing by $g$, equation (\ref{equation-1}) becomes
\[
t \partial_1(f) = t g - a_0 f.
\]
It follows that we may write $f=t \cdot f_1$ woth some $f_1\in k[t]$. Substituting and dividing by $t$ we obtain
\[
f_1 + t \partial_1(f_1) - g = - a_0 f_1.
\]
We now apply $\partial_1$ and use $\partial_1(g) = 0$ to obtain
\[
0 =(1 + a_0) \partial_1(f_1) + \partial_1(f_1) + 2 t \partial_2(f_1) =  (2 + a_0) \partial_1(f_1) + 2 t \partial_2(f_1).
\]
Since we assumed $p\neq 2$, the previous identity implies that the coefficients $c_i$ of $f_1$ must satisfy
\[
i \cdot c_i \cdot (a_0 + 2 + i - 1) = 0.
\]
If we set $a_0:=p - 1$, then we see that the  $c_i$ can only be nonzero for $p | i$, so that $f_1(t) = f_2(t^p)$ with some $f_2\in k[t]$. In summary, $b_1$ is of the form $t \frac{f_2(t^p)}{g_1(t^p)}$, but the first iterative derivative of such an element is $\frac{f_2(t^p)}{g_1(t^p)}$, not the required $1 + (p -1) t^{-1} t \frac{f_2(t^p)}{g_1(t^p)} = 1 + \frac{f_2(t^p)}{g_1(t^p)}$, a contradiction.
\par In conclusion: if  $a_0 = p - 1$ and the other $a_i$-s are chosen suitably, then the differential Galois group scheme is $\mathbb{G}_m \ltimes \mathbb{G}_a$. In this case we get solution fields corresponding to non-normal, non-reduced observable subgroup schemes, namely the $p^n$-th roots of unity $\mu_{p^n}$ in $\mathbb{G}_m \subseteq \mathbb{G}_m \ltimes \mathbb{G}_a$.

\end{example}

\section{Difference rings}

In this section we briefly explain how to apply the general theory of solution algebras in the context of difference Galois theory. In characteristic zero it would also be possible to treat difference modules as generalized (noncommutative) differential modules as in \cite{MR1862024} and then invoke the results of \cite{MR3215927} more or less directly, but the general theory of the present paper allows a quicker approach.

Let us first recall some basics. A difference ring $\mathcal{A}$ is a pair $(A, \sigma)$ where $A$ is a commutative ring and $\sigma \colon A \rightarrow A$ is a ring endomorphism.
A difference ideal of $\mathcal{A}$ is an ideal $I$ such that $\sigma(I) \subseteq I$. A simple difference ring is a difference ring with only the trivial difference ideals. Simple difference rings are always reduced, and their constant ring (i.e. the subring of fixed elements of $\sigma$) is a field (\cite{MR1480919}, Lemma 1.7).

\par A difference module $\mathcal{M}$ over $\mathcal{A}$ is a pair $(M,\Sigma)$ where $M$ is an $R$-module over $R$ and $\Sigma \colon M \to M$ is a $\sigma$-semilinear additive map. These form an abelian category with a natural tensor structure. However, inner Hom's do not exist in general, so one has to make some restrictions in order to dispose of them.

\par Assume moreover that $A$ is noetherian.
Under this assumption a difference module $\mathcal{M}=(M,\Sigma)$ is called {\em \'etale} if $M$ is finitely generated and the endomorphism $A_\sigma\otimes_AM\to M$ induced by $\lambda\otimes m\mapsto \lambda\cdot\Sigma(m)$ is bijective. (Here $A_\sigma$ denotes $A$ regarded as a module over itself via $\sigma$.) In case the endomorphism $\sigma:\, A\to A$ is flat, \'etale difference modules over $\mathcal A$ form an abelian tensor category having inner Hom's (see \cite{MR1106901}, A.1.15 and A.1.17). Thus, as in the case of differential modules, if moreover $M$ is projective over $A$, then $\mathcal{M}^\vee = {\mathcal Hom}(\mathcal{M}, \mathcal{A})$ defines a dual for $M$.

We then have the following analogue of Proposition \ref{proposition-fg-id-module-is-projective}, which is again proven by the same argument as \cite{MR3215927}, Theorem 2.2.1.

\begin{proposition}\label{proposition-fg-difference-module-over-simple-difference-ring-is-projective}
Let $\mathcal{A}=(A,\sigma)$ be a simple difference ring such that $A$ is a noetherian ring and $\sigma$ is flat, and let $\mathcal{M}$ be an  \'{e}tale difference module over $\mathcal{A}$. Denote by $k$ the constant field of $\mathcal{A}$, and by $T(\mathcal{A})$ its total ring of fractions. We have the following:
\begin{enumerate}
\item The underlying module of $\mathcal{M}$ and its difference subquotients are projective modules.
\item The category consisting of objects that are difference subquotients of finite direct sums of tensor products of the form $\mathcal{M}^{\otimes i} \otimes (\mathcal{M}^{\vee})^{\otimes j}$ form a rigid $k$-linear tensor category $\langle \mathcal{M} \rangle_{\otimes}$ over the constant field $k$ of $\mathcal{A}$.
\item The natural base change functor
    \[
    \langle \mathcal{M} \rangle_{\otimes} \to \langle \mathcal{M}_{T(\mathcal{A})} \rangle_{\otimes}
    \]
    is an equivalence of categories.
\end{enumerate}
\end{proposition}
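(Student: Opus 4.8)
The plan is to imitate, step by step, the proof of (\cite{MR3215927}, Theorem 2.2.1), substituting the $\sigma$-semilinear operator $\Sigma$ for the derivation and invoking the étaleness hypothesis together with the flatness of $\sigma$ exactly where André's argument uses the derivation.

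For part (1), recall that by (\cite{MR1106901}, A.1.15 and A.1.17) the étale difference modules over $\mathcal{A}$ form an abelian tensor category, closed under subquotients inside all difference modules, precisely because $\sigma$ is flat; since $A$ is noetherian, every object of $\langle\mathcal{M}\rangle_\otimes$ is therefore a finitely generated étale difference module, and it suffices to show that such an $\mathcal{N}=(N,\Sigma)$ has projective underlying $A$-module. I would use the Fitting ideals $\mathrm{Fitt}_i(N)\subseteq A$: the defining isomorphism $A_\sigma\otimes_A N\to N$, $\lambda\otimes n\mapsto\lambda\cdot\Sigma(n)$, identifies $\mathrm{Fitt}_i(A_\sigma\otimes_A N)=\sigma(\mathrm{Fitt}_i(N))\cdot A$ with $\mathrm{Fitt}_i(N)$, so each $\mathrm{Fitt}_i(N)$ is a difference ideal and hence equals $0$ or $A$ by simplicity of $\mathcal{A}$. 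As the $\mathrm{Fitt}_i(N)$ increase from $0$ to $A$, there is a unique $r$ with $\mathrm{Fitt}_{r-1}(N)=0$ and $\mathrm{Fitt}_r(N)=A$, and over the reduced noetherian ring $A$ this forces $N$ to be locally free of constant rank $r$, hence finitely generated projective.

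Part (2) is then formal: once every object of $\langle\mathcal{M}\rangle_\otimes$ is projective, $\mathcal{N}^\vee=\mathcal{H}om(\mathcal{N},\mathcal{A})$ is a genuine dual, so the category is stable under tensor products, duals, finite direct sums and subquotients, and is a fully abelian rigid tensor subcategory. It is $k$-linear, where $k$ is the constant field of $\mathcal{A}$ (a field by \cite{MR1480919}, Lemma 1.7): indeed $\End(\mathbf{1})=k$ by a direct computation, and the Hom spaces are finite-dimensional since $\Hom(\mathcal{N}_1,\mathcal{N}_2)=\Hom(\mathbf{1},\mathcal{N}_1^\vee\otimes\mathcal{N}_2)$ is the space of constants of a projective module of finite rank. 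For part (3), note that $A$ injects into its total ring of fractions $T(A)=S^{-1}A$, with $S$ the set of non-zero-divisors, and that base change along $\mathcal{A}\to T(\mathcal{A})$ preserves étaleness, so the functor is well defined; it is faithful because projective modules inject into their localizations. For essential surjectivity, any object of $\langle\mathcal{M}_{T(\mathcal{A})}\rangle_\otimes$ is a difference subquotient $\mathcal{Q}'/\mathcal{R}'$ of some $\mathcal{P}\otimes_A T(A)$ with $\mathcal{P}\in\langle\mathcal{M}\rangle_\otimes$; putting $\mathcal{Q}=\mathcal{Q}'\cap\mathcal{P}$ and $\mathcal{R}=\mathcal{R}'\cap\mathcal{P}$, which are difference submodules of $\mathcal{P}$, the identity $S^{-1}(N'\cap\mathcal{P})=N'$ valid for all $T(A)$-submodules $N'\subseteq\mathcal{P}\otimes_A T(A)$ yields $\mathcal{Q}\otimes_A T(A)=\mathcal{Q}'$ and $\mathcal{R}\otimes_A T(A)=\mathcal{R}'$, so $\mathcal{Q}/\mathcal{R}\in\langle\mathcal{M}\rangle_\otimes$ lifts the given object.

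For fullness, given a morphism $g\colon\mathcal{N}_1\otimes_A T(A)\to\mathcal{N}_2\otimes_A T(A)$ of difference modules over $T(\mathcal{A})$, I would descend its graph $\Gamma_g\subseteq(\mathcal{N}_1\oplus\mathcal{N}_2)\otimes_A T(A)$ to the difference submodule $\Gamma=\Gamma_g\cap(\mathcal{N}_1\oplus\mathcal{N}_2)$, which satisfies $\Gamma\otimes_A T(A)=\Gamma_g$ by the same intersection trick. The coordinate projection $\pi_1\colon\Gamma\to\mathcal{N}_1$ is a morphism of étale difference modules, and it is an isomorphism: $\ker\pi_1$ is a finitely generated submodule of the projective module $\mathcal{N}_1\oplus\mathcal{N}_2$ that vanishes after $\otimes_A T(A)$, hence is $0$; and $\coker\pi_1$ is a finitely generated étale difference module that vanishes after $\otimes_A T(A)$, hence is $0$ by the key lemma below. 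Then $g$ descends as the difference morphism $\pi_2\circ\pi_1^{-1}\colon\mathcal{N}_1\to\mathcal{N}_2$, which finishes full faithfulness. The step demanding the most care — the one place where André's characteristic-zero argument needs a genuine difference-theoretic replacement — is exactly this key lemma: a finitely generated étale difference module $C$ over a simple difference ring $\mathcal{A}$ with $C\otimes_A T(A)=0$ vanishes. Such a $C$ is annihilated by some non-zero-divisor of $A$, so $\mathrm{Ann}_A(C)\neq 0$, and $\mathrm{Ann}_A(C)$ is a difference ideal, because étaleness gives $C=A\cdot\Sigma(C)$, so $aC=0$ implies $\sigma(a)\cdot\Sigma(C)=0$ and hence $\sigma(a)C=0$. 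Simplicity of $\mathcal{A}$ then forces $\mathrm{Ann}_A(C)=A$, i.e. $C=0$. This is the analogue of the fact, used in the differential setting, that a differential submodule of a torsion module over a simple differential ring is zero; it is also the point at which the étale hypothesis, rather than the bare structure of a difference module, becomes indispensable, both here and in the Fitting-ideal argument of part (1).
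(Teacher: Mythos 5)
Your overall plan coincides with the paper's: the authors prove this proposition by transposing the argument of Andr\'e's Theorem 2.2.1 to the difference setting, and your adaptations --- the Fitting-ideal computation for projectivity, the graph/intersection descent for part (3), and the vanishing lemma for a finitely generated module killed by a non-zero-divisor --- are exactly the right difference-theoretic substitutes. The vanishing lemma is indeed the point where \'etaleness replaces the Leibniz rule, and your proof of it is correct (note that it only uses surjectivity of the structural map, so it does apply to the cokernel in your graph argument).

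There is, however, one step that does not go through as written, and it sits precisely at the part of statement (1) concerning \emph{difference subquotients}. Your Fitting-ideal argument needs the structural map $A_\sigma\otimes_A N\to N$ to be surjective in order to get $\sigma(\mathrm{Fitt}_i(N))\cdot A=\mathrm{Fitt}_i(A_\sigma\otimes_A N)\subseteq \mathrm{Fitt}_i(N)$, i.e.\ that the Fitting ideals are difference ideals. For a $\Sigma$-stable submodule $N'\subseteq N$ of an \'etale module one only knows $A\cdot\Sigma(N')\subseteq N'$: flatness of $\sigma$ gives injectivity of $A_\sigma\otimes_A N'\to N'$ but not surjectivity, so $N'$ is not known to be \'etale, and Fitting ideals are not monotone under inclusions of modules, so $\mathrm{Fitt}_i(N')$ is not obviously a difference ideal. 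Your appeal to Katz for ``closed under subquotients inside all difference modules'' is therefore carrying real weight: that reference yields that kernels and cokernels of \emph{morphisms} of \'etale modules are \'etale, not that arbitrary $\Sigma$-stable submodules are --- and the latter is most of the content of part (1). The repair is to run your computation on the quotient $N/N'$ instead: its structural map is surjective (being induced by the bijective one for $N$), so its Fitting ideals are difference ideals, hence $0$ or $A$ by simplicity, hence $N/N'$ is projective of constant rank $r$; then $N'$ is a direct summand of $N$, hence projective, and since Fitting ideals commute with base change, $A_\sigma\otimes_A(N/N')$ is also projective of rank $r$, so the surjection $A_\sigma\otimes_A(N/N')\to N/N'$ has projective kernel of rank $0$ and is bijective, whence (by the snake lemma applied to the short exact sequences) $A_\sigma\otimes_A N'\to N'$ is bijective as well. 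With this patch every difference subquotient is \'etale and projective, and the remainder of your proof of parts (2) and (3) is correct.
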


\emph{From now on we assume we are in the situation of the proposition and moreover $k$ is algebraically closed.} As before, we consider the tensor category of \'etale difference modules over $\mathcal{A}$ as pointed via the natural forgetful functor $\vartheta$. By Theorem \ref{pvfibfun}  there exists a Picard-Vessiot ring $\mathcal{P}$ for $\mathcal{M}$ and the category $\langle \mathcal{M} \rangle_{\otimes}$ is equivalent to the category $\Repf_k(G)$, where $G$ is the Galois group scheme of $\mathcal{M}$ pointed at the Picard-Vessiot ring. We will denote by $\omega$ the fibre functor given by the Picard-Vessiot ring. Furthermore, Proposition \ref{proposition-fg-difference-module-over-simple-difference-ring-is-projective} (3) implies that the base change of the Picard-Vessiot ring to the total ring of fractions is the Picard-Vessiot ring of the difference module $\mathcal{M}_{T(\mathcal{A})}$, and the associated Galois group schemes are isomorphic.

We also note that the Picard-Vessiot ring $\mathcal{P}$ has the properties satisfied by the base ring $\mathcal{A}$: it is a noetherian ring (as $P$ is faithfully flat over $A$) and the endomorphism of $\mathcal{P}$ is the base change of $\sigma$ via $A\to P$ (as $\mathcal P$ is a colimit of \'{e}tale difference modules), hence flat.

The general definition of solution algebras applies in this context as well, and by Theorem \ref{proposition-solution-algebra-equiavalence} we obtain a correspondence between solution algebras and quasi-homogeneous schemes over the Galois group scheme.

We even have the following analogue of Proposition \ref{proposition-sol-alg-equivalent-characterization} for difference rings (proven in the same way):

\begin{proposition}
A difference ring $\mathcal{S}$ with flat endomorphism over $\mathcal{A}$ is a solution algebra for $\langle \mathcal{M} \rangle_{\otimes}$ if and only if the $S$ is contained in a noetherian simple difference ring with flat endomorphism whose constant field is $k$ and there exists a morphism $\mathcal{N} \to \mathcal{S}$ of \'{e}tale difference modules over $\mathcal{A}$ whose image generates $S$ as an $A$-algebra.
\end{proposition}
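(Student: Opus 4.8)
The plan is to follow the proof of Proposition \ref{proposition-sol-alg-equivalent-characterization} almost verbatim; the only adjustment is that, since simple difference rings are reduced but need not be integral domains, the integrality hypothesis there is replaced by the requirement that $S$ sit inside a noetherian \emph{simple} difference ring with constant field $k$. Throughout I would use the remark recorded just before the statement: a Picard--Vessiot ring $\mathcal{P}$ for $\langle\mathcal{M}\rangle_{\otimes}$ is automatically noetherian; it is a simple difference ring (property (1) of Definition \ref{definition-p-v-ring}, read in the category of difference modules over $\mathcal{A}$); its endomorphism is flat (being a filtered colimit of \'etale difference modules, its $\sigma$ is the flat base change of $\sigma_{\mathcal{A}}$ along $A\to P$); and its constant ring is $k$, since the ring of $\sigma$-invariants of $\mathcal{P}$ is $\End_{\mathcal{P}}(\mathcal{P})$, which equals $k$ by property (2) of Definition \ref{definition-p-v-ring}.

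For the ``only if'' direction I would argue as follows. If $\mathcal{S}$ is a solution algebra for $\langle\mathcal{M}\rangle_{\otimes}$, then condition (2) of Definition \ref{defi-solution-algebra} directly yields an object $\mathcal{N}\in\langle\mathcal{M}\rangle_{\otimes}$ and a morphism $\mathcal{N}\to\mathcal{S}$ inducing a surjection $\Sym^{*}(\mathcal{N})\twoheadrightarrow\mathcal{S}$, so that the image of $\mathcal{N}$ generates $S$ as an $A$-algebra; and condition (1) embeds $\mathcal{S}$ into a Picard--Vessiot ring $\mathcal{P}$, which by the previous paragraph is a noetherian simple difference ring with flat endomorphism and constant field $k$. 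Hence all the listed conditions hold, and this direction is routine.

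The substance is the converse. Since the third listed condition is exactly condition (2) of Definition \ref{defi-solution-algebra} with $Y:=\mathcal{N}$, what remains is to establish condition (1), i.e. to embed $\mathcal{S}$ into a Picard--Vessiot ring for $\langle\mathcal{M}\rangle_{\otimes}$; this is the difference analogue of (\cite{MR3215927}, Proposition 3.1.6 (1)), and I would transport that argument as follows. Let $\mathcal{T}\supseteq\mathcal{S}$ be the noetherian simple difference ring with flat $\sigma$ and constant field $k$ furnished by hypothesis. Since $k$ is algebraically closed and $\mathcal{T}$ is noetherian with flat $\sigma$, Proposition \ref{proposition-fg-difference-module-over-simple-difference-ring-is-projective} and Theorem \ref{pvfibfun}, applied with base ring $\mathcal{T}$, produce a Picard--Vessiot ring $\mathcal{Q}$ for $\mathcal{M}_{\mathcal{T}}$ over $\mathcal{T}$; then $\mathcal{Q}$ is a noetherian simple difference ring with constant field $k$ over which $\mathcal{M}$, hence every object of $\langle\mathcal{M}\rangle_{\otimes}$, becomes trivial, and $\mathcal{S}\subseteq\mathcal{T}\subseteq\mathcal{Q}$. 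Inside $\mathcal{Q}$ I would then carry out the usual construction of a Picard--Vessiot ring for $\mathcal{M}$ over $\mathcal{A}$: let $\mathcal{P}\subseteq\mathcal{Q}$ be the minimal difference $\mathcal{A}$-subalgebra over which $\mathcal{M}$ is solvable (equivalently, the subalgebra generated by a fundamental system of solutions of $\mathcal{M}$ and of $\mathcal{M}^{\vee}$ in $\mathcal{Q}$). By construction $\mathcal{M}$ is solvable in $\mathcal{P}$, and the constant ring of $\mathcal{P}$, being sandwiched between those of $\mathcal{A}$ and $\mathcal{Q}$, equals $k$; one checks that $\mathcal{P}$ is a Picard--Vessiot ring for $\langle\mathcal{M}\rangle_{\otimes}$ over $\mathcal{A}$. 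Granting this, every object of $\langle\mathcal{M}\rangle_{\otimes}$ is solvable in $\mathcal{P}$ (Corollary \ref{solv} and Proposition \ref{proposition-dualizable-solvable-objects}), so the solutions in $\mathcal{Q}$ of any such object — being built out of those of $\mathcal{M}$ and $\mathcal{M}^{\vee}$, which lie in $\mathcal{P}$ — already lie in $\mathcal{P}$; in particular the image of the given morphism $\mathcal{N}\to\mathcal{S}\subseteq\mathcal{Q}$ lies in $\mathcal{P}$. Since this image generates $S$ over $A$, we obtain $\mathcal{S}\subseteq\mathcal{P}$, which is condition (1) of Definition \ref{defi-solution-algebra}.

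I expect the main obstacle to be the assertion, used in the converse, that the subalgebra $\mathcal{P}$ exhibited inside $\mathcal{Q}$ is genuinely a Picard--Vessiot ring for $\langle\mathcal{M}\rangle_{\otimes}$ in the sense of Definition \ref{definition-p-v-ring} — above all that it is a simple difference ring and faithfully flat over $\mathcal{A}$. This is precisely where the hypotheses that $\mathcal{T}$, and hence $\mathcal{Q}$, be noetherian, have flat endomorphism, and have constant field $k$ are used essentially; the verification is the one performed in the difference Galois theory literature and in (\cite{MR3215927}, Proposition 3.1.6 (1)), while everything else in the argument is formal, reducing to the properties of the functor $\omega$ and to Theorem \ref{proposition-solution-algebra-equiavalence}.
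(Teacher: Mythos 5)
Your proposal is correct and follows the route the paper intends: the ``only if'' direction is formal once one records (as the paper does just before the statement) that a Picard--Vessiot ring is itself a noetherian simple difference ring with flat endomorphism and constant field $k$, and the entire content lies in the converse, namely embedding $\mathcal{S}$ into a Picard--Vessiot ring. There your construction of the common extension differs from the one the paper points to. The paper defers to Proposition \ref{proposition-sol-alg-equivalent-characterization} and thence to (\cite{MR3215927}, Proposition 3.1.6 (1)), where one tensors with a given Picard--Vessiot ring, quotients by a maximal difference ideal, and invokes the ``no new constants'' lemma (using that $k$ is algebraically closed) before the dimension count. You instead apply Theorem \ref{pvfibfun} over the ambient simple ring $\mathcal{T}$ to obtain $\mathcal{Q}$, whose constant ring is $k$ by property (2) of Definition \ref{definition-p-v-ring}, and then must \emph{find} a Picard--Vessiot ring for $\mathcal{M}$ over $\mathcal{A}$ inside $\mathcal{Q}$ --- the step you correctly single out as the crux. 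The trade is fair: you avoid the no-new-constants lemma at the price of the ``generated subalgebra is simple and faithfully flat'' verification. Within the paper's own framework that verification can be bypassed: $Y\mapsto\Hom_{\mathcal{A}}(Y^{\vee},\mathcal{Q})$ is a neutral fibre functor on $\langle\mathcal{M}\rangle_{\otimes}$, so Theorem \ref{pvfibfun} supplies the Picard--Vessiot ring $\mathcal{P}$, and the evaluation morphisms assemble to a map $\mathcal{P}\to\mathcal{Q}$ which is injective because $\mathcal{P}$ is simple. Finally, your phrase that solutions of $\mathcal{N}$ are ``built out of'' those of $\mathcal{M}$ and $\mathcal{M}^{\vee}$ should be replaced by the actual mechanism: $\mathcal{Q}\otimes\mathcal{N}\cong\mathcal{Q}\otimes_{\mathcal{P}}(\mathcal{P}\otimes\mathcal{N})$ is trivial and $\mathcal{P}$, $\mathcal{Q}$ have the same constants $k$, so $\Hom_{\mathcal{A}}(\mathcal{N},\mathcal{P})\subseteq\Hom_{\mathcal{A}}(\mathcal{N},\mathcal{Q})$ is an inclusion of $k$-vector spaces of the same finite dimension, hence an equality; this is what forces the image of $\mathcal{N}\to\mathcal{S}\subseteq\mathcal{Q}$ into $\mathcal{P}$ and hence $S\subseteq P$.
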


\begin{remark}\rm Plainly, the theory sketched above is not the most general possible. In particular, we imposed the condition that $k$ is algebraically closed only to ensure the existence of Picard--Vessiot rings (and also because we'll need this condition below). In situations where $k$ is non-closed but the existence of a neutral fibre functor is known, the above arguments work equally well. This is the case with the various categories of $t$-motives considered by Papanikolas in \cite{papa}. It would also be interesting to extend the abstract theory of solution algebras so that it covers the very general setting in which Ovchinnikov and Wibmer \cite{ow} define Picard--Vessiot rings for linear difference equations; it seems to us that this requires more substantial work.
\end{remark}

We now consider solution fields. Consider a difference field $\mathcal{K}$ (i.e. a field equipped with an endomorphism $\sigma$). {\em We assume throughout that $\sigma$ is bijective.} In this case an \'etale difference module $\mathcal{M}$ over $\mathcal{K}$ is just a difference module with bijective endomorphism $\Sigma$. The Picard--Vessiot ring $\mathcal{P}$ for $\mathcal{M}$ carries an endomorphism that is also bijective (being a direct limit of \'etale difference modules over $\mathcal K$). It extends to an automorphism of the total ring of fractions $T({P})$ which is a semisimple commutative ring, i.e. a finite product of fields. The resulting difference ring  $T(\mathcal{P})$ is called the total Picard-Vessiot ring of $\mathcal M$. We now define:

\begin{definition}
Let $\mathcal{L}{\mid}\mathcal{K}$ be an extension of difference rings. We say that $\mathcal{L}$ is a total solution ring for $\langle \mathcal{M} \rangle_{\otimes}$ if
\begin{enumerate}
\item every non-zerodivisor of $L$ is a unit in $L$,
\item the constant ring of $\mathcal{L}$ is $k$,
\item there exists a difference module $\mathcal{N}$ in $\langle \mathcal{M} \rangle_{\otimes}$ and a morphism of difference modules $\mathcal{N} \to \mathcal{L}$ such that the total fraction ring of the image of this homomorphism is $L$.
\end{enumerate}
\end{definition}

It follows from the definitions that total solution rings for $\langle \mathcal{M} \rangle_{\otimes}$ are exactly the total fraction rings of solution algebras. As the latter embed in the Picard-Vessiot algebra $\mathcal P$, total solution rings embed in $T(\mathcal{P})$.

We now quote the following Galois correspondence for total Picard-Vessiot rings in characteristic $0$ from \cite{MR1480919}, Theorem 1.29:

\begin{proposition}\label{proposition-difference-galois-correspondence}
Let $\mathcal{K}$ be a difference field of characteristic $0$ with bijective endomorphisn and algebraically closed constant field $k$, and let $\mathcal{M}$ be an \'etale  difference module over $\mathcal{K}$. Denote by $T(\mathcal{P})$ the total Picard-Vessiot ring of $\mathcal{M}$ over $\mathcal{K}$ and by $G$ the Galois group scheme. The maps $H \mapsto T(\mathcal{P})^H$ and $\mathcal{L} \mapsto \Aut(T(\mathcal{P}){\mid}\mathcal{L})$ define an order-reversing bijection between the set of closed subgroups of $G(k)$ and those intermediate difference rings of $\Aut(T(\mathcal{P}){\mid}\mathcal{K})$ where every non-zerodivisor is a unit.
\end{proposition}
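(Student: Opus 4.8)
The plan is to reproduce, in the present total-fraction-ring setting, the classical strategy for the Picard--Vessiot Galois correspondence (as in \cite{MR1862024} and, for difference equations, \cite{MR1480919}). Everything rests on two structural facts. The first is the \emph{torsor property}: from the Tannakian description of $\underline{\Isom}^{\otimes}$ recalled in Remark \ref{pvremas}(1) (applied to the forgetful fibre functor $\vartheta$ and $\omega := \omega_{\mathcal{P}}$), one gets an isomorphism of difference rings $\mathcal{P} \otimes_{\mathcal{K}} \mathcal{P} \cong \mathcal{O}(G) \otimes_{k} \mathcal{P}$ in which $\sigma$ acts only through the $\mathcal{P}$-factors and trivially on $\mathcal{O}(G)$. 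The second is the \emph{fundamental identity} $T(\mathcal{P})^{G} = \mathcal{K}$. Since $k$ is algebraically closed of characteristic $0$, $G$ is smooth and is determined by its group of $k$-points, so I will phrase the argument throughout in terms of Zariski-closed subgroups of $G(k)$; this is harmless.

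First I would establish the two ingredients. The $G$-action on $\mathcal{P}$ extends uniquely to $T(\mathcal{P})$; if $x \in T(\mathcal{P})$ is fixed by every $g \in G(k)$, then transporting through the torsor isomorphism and using that $\mathcal{O}(G)$ is reduced and $G(k)$ is Zariski-dense in $G$ forces $x \otimes 1 = 1 \otimes x$ in $T(\mathcal{P}) \otimes_{\mathcal{K}} T(\mathcal{P})$, and faithfully flat descent along the (free, hence faithfully flat) extension $\mathcal{K} \hookrightarrow T(\mathcal{P})$ then gives $x \in \mathcal{K}$. A short argument using the finite-product-of-fields structure of $T(\mathcal{P})$ shows that $\sigma$-stable subrings in which non-zerodivisors are units are exactly the total fraction rings of $\sigma$-stable subrings, and that $T(\mathcal{P})^{H}$ is such a ring for every subgroup $H$.

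Next I would run the correspondence. Given an intermediate difference ring $\mathcal{L}$ in which every non-zerodivisor is a unit, base-change $\mathcal{M}$ to $\mathcal{L}$: the ring $T(\mathcal{P})$ is again the total Picard--Vessiot ring of $\mathcal{M}_{\mathcal{L}}$ over $\mathcal{L}$ (same generating solutions, constant field still $k$, total-ring hypotheses preserved), so $H_{\mathcal{L}} := \Aut(T(\mathcal{P}) \mid \mathcal{L})$ is a closed subgroup scheme of $G$, and applying the fundamental identity to this \emph{relative} situation yields $T(\mathcal{P})^{H_{\mathcal{L}}} = \mathcal{L}$. Thus $\mathcal{L} \mapsto \Aut(T(\mathcal{P}) \mid \mathcal{L})$ is injective with left inverse $H \mapsto T(\mathcal{P})^{H}$. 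It then remains to prove that $H \mapsto T(\mathcal{P})^{H}$ is injective on closed subgroups; granting this, for closed $H$ the subgroup $\widetilde{H} := \Aut(T(\mathcal{P}) \mid T(\mathcal{P})^{H})$ contains $H$ and satisfies $T(\mathcal{P})^{\widetilde{H}} = T(\mathcal{P})^{H}$ by the relative identity just proven, hence $\widetilde{H} = H$; the two maps are then mutually inverse and order-reversing by construction.

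The hard part will be the injectivity of $H \mapsto T(\mathcal{P})^{H}$, together with the bookkeeping behind the relative Picard--Vessiot step. For injectivity one must use the torsor structure beyond the weak consequence $T(\mathcal{P})^{G} = \mathcal{K}$: concretely, the isomorphism $\mathcal{P} \otimes_{\mathcal{K}} \mathcal{P} \cong \mathcal{O}(G) \otimes_{k} \mathcal{P}$ should be used to match $G$-stable sub-difference-rings of $T(\mathcal{P})$, equipped with their induced descent data, against closed $G$-stable subschemes of the torsor $\Spec(\mathcal{P})$, and hence against their stabilizers, so that the subgroup is recovered from the fixed ring. The relative step requires checking that the \'etale-difference-module formalism and the Proposition \ref{proposition-fg-difference-module-over-simple-difference-ring-is-projective}-type results persist when the base is an intermediate total ring $\mathcal{L}$ (a finite product of fields) rather than the difference field $\mathcal{K}$, and that passing to $H$-invariants is compatible with the product decomposition of $T(\mathcal{P})$. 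In characteristic $0$ all the group schemes in play are reduced, so none of the inseparability phenomena of Section \ref{ID} arise --- which is essentially why the cited theorem is stated only in characteristic $0$.
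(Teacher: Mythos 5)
The paper does not actually prove this proposition: it is quoted, as stated, from \cite{MR1480919}, Theorem 1.29, so there is no in-paper argument to measure your attempt against. Your sketch is a faithful reconstruction of the strategy used in that reference (and in its differential counterpart): the torsor isomorphism $\mathcal{P}\otimes_{\mathcal{K}}\mathcal{P}\cong\mathcal{O}(G)\otimes_k\mathcal{P}$, the fundamental identity $T(\mathcal{P})^G=\mathcal{K}$ obtained from it by faithfully flat descent, and the relative Picard--Vessiot argument giving $T(\mathcal{P})^{\Aut(T(\mathcal{P})\mid\mathcal{L})}=\mathcal{L}$. Those parts are sound, modulo the bookkeeping you yourself flag for working over a base $\mathcal{L}$ that is a finite product of fields permuted by $\sigma$.

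The genuine gap is the step you defer: injectivity of $H\mapsto T(\mathcal{P})^H$, equivalently the separation lemma that $H\subsetneq H'$ closed implies $T(\mathcal{P})^{H'}\subsetneq T(\mathcal{P})^{H}$. The mechanism you propose --- matching $G$-stable sub-difference-rings of $T(\mathcal{P})$ against closed $G$-stable subschemes of the torsor and their stabilizers --- does not work as stated, because a closed subgroup $H$ is in general \emph{not} determined by its ring of invariant regular functions $\mathcal{O}(G)^H$: if $G/H$ fails to be quasi-affine this ring can be far too small (in the extreme case $\mathcal{O}(G)^H=k$), which is precisely the observability phenomenon of Proposition \ref{theorem-observable-subgroup-equi}. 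What rescues the argument, and what must be invoked, is that a closed subgroup \emph{is} determined by its invariant \emph{rational} functions: for $H\subsetneq H'$ one has $k(G)^{H'}\subsetneq k(G)^{H}$, and $H$ is recovered as the set of points fixing $k(G)^H$ (Rosenlicht/Chevalley; this is how the proof in \cite{MR1480919} proceeds). Transporting this through the trivialization of the torsor over $T(\mathcal{P})$ produces an element of $T(\mathcal{P})^{H}$ that is moved by some point of $H'$. This is exactly why the correspondence is formulated with the total ring of fractions $T(\mathcal{P})$ rather than with $\mathcal{P}$ itself, and why the intermediate objects are required to have every non-zerodivisor invertible. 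Until this lemma is supplied, your two maps are only shown to form a retraction--section pair, not mutually inverse bijections.
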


Note that since $k$ is assumed to be algebraically closed of characteristic 0, one can work here with closed subgroups of $G(k)$ as in classical differential Galois theory.
Having this correspondence at our disposal, the same argument that proved Theorem \ref{idobs} also gives:

\begin{theorem}\label{solution-rings-difference-case}
Let $\mathcal{L}$ be an intermediate difference ring of $T(\mathcal{P}){\mid}\mathcal{K}$ in which every non-zerodivisor is a unit.

The ring $\mathcal{L}$ is a total solution ring for $\langle \mathcal{M} \rangle_{\otimes}$ if and only if the corresponding subgroup $H$ is an observable subgroup of the Galois group $G(k)$.
\end{theorem}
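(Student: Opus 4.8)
The plan is to follow the proof of Theorem~\ref{idobs} essentially verbatim, substituting the difference Galois correspondence of Proposition~\ref{proposition-difference-galois-correspondence} for that of Theorem~\ref{galcor} and the total Picard--Vessiot ring $T(\mathcal{P})$ for the field $\mathcal{J}$. Throughout I would use that, $k$ being algebraically closed of characteristic $0$, the group scheme $G$ is smooth; hence closed subgroups of $G(k)$ correspond to reduced closed subgroup schemes of $G$, and in this dictionary Proposition~\ref{theorem-observable-subgroup-equi} applies, characterizing observability via stabilizers of vectors in finite-dimensional $G$-representations.

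For the implication ``$H$ observable $\Rightarrow$ $T(\mathcal{P})^H$ is a total solution ring'', I would start from an observable $H\subseteq G(k)$ and pick, by Proposition~\ref{theorem-observable-subgroup-equi}~(3), a finite-dimensional $G$-representation $V$ and a vector $v\in V$ with $H=\Stab_G(v)(k)$. Via the Tannakian equivalence $\langle\mathcal{M}\rangle_\otimes\simeq\Repf_k(G)$ attached to $\omega$, write $V=\omega(\mathcal{N}^\vee)$ for a difference module $\mathcal{N}$ in $\langle\mathcal{M}\rangle_\otimes$; by Remark~\ref{rewrite} the vector $v$ corresponds to a morphism of difference modules $v\colon\mathcal{N}\to\mathcal{P}\hookrightarrow T(\mathcal{P})$. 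Let $\mathcal{S}_0\subseteq\mathcal{P}$ be the difference subalgebra generated by $v(\mathcal{N})$ and $\mathcal{L}:=T(\mathcal{S}_0)$ its total fraction ring inside $T(\mathcal{P})$. Then $\mathcal{S}_0$ satisfies conditions (1)--(2) of Definition~\ref{defi-solution-algebra}, so $\mathcal{L}$ is a total solution ring: every non-zerodivisor of $\mathcal{L}$ is a unit because $\mathcal{L}$ is a total fraction ring, the constant ring of $\mathcal{L}$ is $k$ because $k\subseteq\mathcal{L}\subseteq T(\mathcal{P})$ and $T(\mathcal{P})$ has constant ring $k$, and condition (3) holds by construction. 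Finally, an element $g\in G(k)$ fixes $\mathcal{L}$ pointwise if and only if it fixes $v(\mathcal{N})$ pointwise, if and only if it fixes $v$ as a point of $V$, i.e. if and only if $g\in\Stab_G(v)(k)=H$; thus $\Aut(T(\mathcal{P})\mid\mathcal{L})=H$, and by injectivity in Proposition~\ref{proposition-difference-galois-correspondence} we conclude $\mathcal{L}=T(\mathcal{P})^H$.

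Conversely, given a total solution ring $\mathcal{L}$ intermediate in $T(\mathcal{P})\mid\mathcal{K}$, I would use that $\mathcal{L}$ is the total fraction ring of a solution algebra $\mathcal{S}$ (as noted after the definition), that $\mathcal{S}$ embeds in $\mathcal{P}$ by Definition~\ref{defi-solution-algebra}~(1), and that by part (2) there is a morphism $v\colon Y\to\mathcal{S}\hookrightarrow\mathcal{P}$ with $Y\in\langle\mathcal{M}\rangle_\otimes$ whose image generates $\mathcal{S}$, hence generates $\mathcal{L}$ as a total fraction ring. Regarding $v$ as an element of $\Hom(Y,\mathcal{P})\cong\omega(Y^\vee)$, the same chain of equivalences as above identifies the subgroup $H\subseteq G(k)$ fixing $\mathcal{L}$ with $\Stab_G(v)(k)$, which is observable by Proposition~\ref{theorem-observable-subgroup-equi}~(3).

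I do not expect a serious obstacle, since every substantive ingredient---the existence of the Picard--Vessiot ring and the Tannakian equivalence (Theorem~\ref{pvfibfun}), the characterization of solution algebras, and the difference Galois correspondence---is already in place. The only points requiring care are bookkeeping ones: verifying that the subalgebra $\mathcal{S}_0$ above carries a flat (in fact bijective) endomorphism, using étaleness of $\mathcal{N}$ and bijectivity of $\sigma$, so that it lives in the category of étale difference modules and qualifies as a solution algebra, and checking that passing between $G$ and $G(k)$ is harmless under the standing hypotheses.
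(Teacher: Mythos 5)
Your proposal is correct and is essentially the paper's own argument: the paper proves this theorem by stating that, with Proposition~\ref{proposition-difference-galois-correspondence} in hand, ``the same argument that proved Theorem~\ref{idobs}'' applies, which is exactly the substitution you carry out. Your additional bookkeeping remarks (total fraction rings in place of fields, reduced subgroup schemes versus subgroups of $G(k)$ over an algebraically closed field of characteristic $0$) are the right points to check and present no obstacle.
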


Finally, we discuss an application to transcendence theory pointed out to us by Yves Andr\'e. In (\cite{MR3215927}, Corollary 1.7.1) he explains how his results on solution algebras for differential modules imply a theorem of Beukers \cite{beukers} concerning the specialization of algebraic relations between E-functions. The theory of solution algebras for difference equations sketched above implies the analogous results of Adamczewski--Faverjon \cite{adamczewski} and Philippon \cite{philippon} for Mahler functions.

Recall that a $q$--Mahler system for an integer $q\geq 2$ is a system of functional equations
\begin{equation}\label{mahler}
\left(\begin{matrix}
f_1(z) \\
\vdots \\
f_n(z)
\end{matrix}\right)
= A(z)
\left(\begin{matrix}
f_1(z^q) \\
\vdots \\
f_n(z^q)
\end{matrix}\right)
\end{equation}
with $A(z)\in \GL_n({\overline{\Bbb Q}}(z))$ and $f_i(z)\in {\overline{\Bbb Q}}\{z\}$ for $i=1,\dots, n$. A function $f(z)\in {\overline{\Bbb Q}}\{z\}$ is a $q$-Mahler function if it is a component of a solution vector of a $q$-Mahler system. A complex number $a$ in the open unit disk is a singularity of the above Mahler system if $a^{q^r}$ is a pole of an entry of $A(z)$ or $A(z)^{-1}$ for some $r\geq 0$. Theorem 1.3 of \cite{philippon} states:

\begin{corollary}\label{mahlercor}
Assume $\alpha$ is a point of the pointed complex unit disk which is not a singularity of the system (\ref{mahler}). Then every polynomial relation among the values $f_1(\alpha),\dots, f_n(\alpha)$ with $\overline{\Bbb Q}$-coefficients is the specialization at $z=\alpha$ of a suitable polynomial relation among the functions $f_1(z),\dots, f_n(z)$ with $\overline{\Bbb Q}(z)$-coefficients.
\end{corollary}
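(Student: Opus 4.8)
The plan is to realise the system \eqref{mahler} inside the solution‑algebra formalism of this section and then to feed in the linear independence theorem of Adamczewski--Faverjon. Put $\mathcal K=(\overline{\mathbb Q}(z),\sigma)$ with $\sigma f(z)=f(z^q)$: this is a simple noetherian difference field, $\sigma$ is flat (it presents $\overline{\mathbb Q}(z)$ as a free rank‑$q$ module over $\overline{\mathbb Q}(z^q)$), and its constant field $k=\overline{\mathbb Q}$ is algebraically closed, so everything above applies. Let $\mathcal M$ be the \'etale difference module of rank $n$ attached to \eqref{mahler}, with Picard--Vessiot ring $\mathcal P$, Galois group scheme $G$ over $\overline{\mathbb Q}$ and fibre functor $\omega=\omega_{\mathcal P}$. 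Because $\alpha$ is not a singularity, iterating $\mathbf f(z)=A(z)\mathbf f(z^q)$ until $z^{q^r}$ lies in the disc of convergence of the $f_i$ at $0$ shows that each $f_i$ continues analytically from $0$ to $\alpha$; thus the $f_i$ span a difference subring $\mathcal S=\overline{\mathbb Q}(z)[f_1,\dots,f_n]$ of the field of germs of meromorphic functions at $0$, and $\mathbf f(\alpha)=(f_1(\alpha),\dots,f_n(\alpha))\in\mathbb C^n$ is well defined. As in the proof of Proposition \ref{proposition-sol-alg-equivalent-characterization}, $\mathcal S$ is a solution algebra for $\langle\mathcal M\rangle_\otimes$ --- the solution morphism $\mathcal M\to\mathcal S$ generates it and, its constant field being $k$, it embeds into $\mathcal P$ --- so by Theorem \ref{proposition-solution-algebra-equiavalence} the $\overline{\mathbb Q}$‑scheme $Z:=\Spec\omega(\mathcal S)$ is affine quasi‑homogeneous under $G$ and carries a $G$‑equivariant closed immersion into $\mathbb A^n_{\overline{\mathbb Q}}$.

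Next I would reformulate the statement. Write $I_{\mathrm{func}}\subset\overline{\mathbb Q}(z)[X_1,\dots,X_n]$ for the prime ideal of all polynomial relations among the functions $f_i(z)$, $I_\alpha\subset\overline{\mathbb Q}[X_1,\dots,X_n]$ for the prime ideal of all polynomial relations among the values $f_i(\alpha)$, and $\widetilde I_\alpha\subset\overline{\mathbb Q}[X_1,\dots,X_n]$ for the ideal obtained from $I_{\mathrm{func}}$ by clearing denominators and setting $z=\alpha$ (legitimate because $\alpha$ is not a singularity). The corollary is exactly the inclusion $I_\alpha\subseteq\widetilde I_\alpha$; since specialising a functional relation yields a relation among the values, $\widetilde I_\alpha\subseteq I_\alpha$ is trivial, so the content is the equality $I_\alpha=\widetilde I_\alpha$. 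One may note that $\widetilde I_\alpha$ has an intrinsic meaning: ``reduction at $z=\alpha$'' of the canonical models over $\overline{\mathbb Q}[z]$ (with the poles of $A^{\pm1}$ inverted) is a second neutral fibre functor $\omega_\alpha$ on $\langle\mathcal M\rangle_\otimes$, the torsor $\underline{\Isom}^\otimes(\omega,\omega_\alpha)$ is trivial over the algebraically closed field $\overline{\mathbb Q}$ by (\cite{delignemilne}, Theorem 3.2), and transporting $\mathcal S$ along a trivialisation identifies $\Spec\omega_\alpha(\mathcal S)$ $G$‑equivariantly with $Z$ and, inside $\mathbb A^n_{\overline{\mathbb Q}}$, with $V(\widetilde I_\alpha)$; so the corollary says precisely that $\mathbf f(\alpha)$ is a $\overline{\mathbb Q}$‑generic point of $Z$.

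Finally I would prove $I_\alpha=\widetilde I_\alpha$ one degree at a time. Fix $d\ge 1$. Polynomial relations of degree $\le d$ among the $f_i(z)$ over $\overline{\mathbb Q}(z)$, respectively among the $f_i(\alpha)$ over $\overline{\mathbb Q}$, are the same as $\overline{\mathbb Q}(z)$‑, respectively $\overline{\mathbb Q}$‑, linear relations among the monomials $\mathbf f(z)^I$, respectively $\mathbf f(\alpha)^I$, with $|I|\le d$; and those monomials are the components of a solution vector of the $q$‑Mahler system whose difference module is $\bigoplus_{e\le d}\Sym^e\mathcal M\in\langle\mathcal M\rangle_\otimes$, a system none of whose singularities equals $\alpha$. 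Applied to this system, the linear independence theorem of Adamczewski--Faverjon \cite{adamczewski} --- which asserts that the $\overline{\mathbb Q}$‑linear relations among the values at a non‑singular point of a solution vector of a $q$‑Mahler system are exactly the specialisations of the $\overline{\mathbb Q}(z)$‑linear relations among the functions --- shows that the degree‑$\le d$ part of $I_\alpha$ equals the specialisation at $z=\alpha$ of the degree‑$\le d$ part of $I_{\mathrm{func}}$; letting $d\to\infty$ yields $I_\alpha=\widetilde I_\alpha$, which is the assertion. The step requiring the most care is the bookkeeping of the second paragraph --- pinning the canonical models down so that $\omega_\alpha(\mathcal S)$ is literally the naive specialisation $\overline{\mathbb Q}[X_1,\dots,X_n]/\widetilde I_\alpha$ of the relation ideal and so that the two closed immersions of $Z$ and of $V(\widetilde I_\alpha)$ into affine space coincide --- whereas the transcendence input enters only in the clean linear form used here.
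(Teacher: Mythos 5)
Your first two paragraphs are a reasonable (if heavier than necessary) setup, but note that the paper deliberately works over the localization $R$ of $\overline{\Bbb Q}[z]$ at the singularities of (\ref{mahler}), the $q^r$-th roots of unity and $0$, rather than over the field $\overline{\Bbb Q}(z)$: the solution algebra is then $S=R[f_1,\dots,f_n]$, an $R$-algebra of finite type, so that ``the fibre of $\Spec(S)\to\Spec(R)$ over $\alpha$'' is literally defined and no auxiliary fibre functor $\omega_\alpha$ or torsor trivialization has to be constructed by hand. The serious problem is your third paragraph. The ``linear independence theorem of Adamczewski--Faverjon'' you invoke --- that the $\overline{\Bbb Q}$-linear relations among the values of a solution vector at a non-singular point are exactly the specializations of the $\overline{\Bbb Q}(z)$-linear relations among the functions --- \emph{is} the main theorem of \cite{adamczewski} and \cite{philippon}, i.e.\ precisely the circle of results this corollary is meant to \emph{reprove} by means of solution algebras (see the sentence introducing the corollary and the introduction). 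Passing from that linear statement to the polynomial one via the monomials of degree $\le d$ and the system attached to $\bigoplus_{e\le d}\Sym^e\mathcal M$ is moreover the standard reduction already carried out in those papers. So, while formally a valid deduction from the literature, your argument begs the question: it contains no new proof, and the solution-algebra apparatus of your first two paragraphs plays no role in the actual deduction.

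The paper's proof is genuinely different and its only transcendence-theoretic input is Nishioka's Mahler analogue of the Siegel--Shidlovsky theorem, which gives the equality of the transcendence degree of $f_1(\alpha),\dots,f_n(\alpha)$ over $\overline{\Bbb Q}$ with that of $f_1,\dots,f_n$ over $\overline{\Bbb Q}(z)$ --- a much older and weaker statement than the lifting theorem. The two geometric ingredients you are missing are: (i) the generic fibre of $\Spec(S)\to\Spec(R)$ is geometrically integral, because algebraic Mahler functions are rational and hence $\overline{\Bbb Q}(z)$ is algebraically closed in the fraction field of $S$; and (ii) by Remark \ref{pvremas} (1), $S\otimes_R P\cong\omega(\mathcal S)\otimes_k P$ for the faithfully flat Picard--Vessiot ring $P$, so all fibres of $\Spec(S)\to\Spec(R)$ become isomorphic to one another after a faithfully flat base change and are therefore all integral --- in particular the fibre over $\alpha$, which is $\Spec$ of your specialized algebra $\overline{\Bbb Q}[X_1,\dots,X_n]/\widetilde I_\alpha$. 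Since $V(I_\alpha)$ is an irreducible closed subscheme of that integral fibre of the same dimension by Nishioka's theorem, the two coincide, giving $I_\alpha=\widetilde I_\alpha$. That mechanism, not a citation of the lifting theorem, is the content you need to supply.
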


The article \cite{adamczewski} contains another proof and a homogeneous version of this statement.

Let us indicate how to prove the corollary along the lines of (\cite{MR3215927}, Corollary 1.7.1). Consider the localization of $\overline{\Bbb Q}[z]$ by all linear polynomials $(z-a)$, where $a$ is a singularity of the system (\ref{mahler}), a $q^r$-th root of 1 or else 0, and equip it with the difference structure induced by $z\mapsto z^q$. The resulting difference ring $\mathcal{R}$ is noetherian and simple. To the system (\ref{mahler}) one associates a difference module over $\mathcal{R}$ which is \'etale under our assumptions on the numbers $a$. The functions $f_1(z),\dots, f_n(z)$ generate a solution algebra $\mathcal{S}$. Since algebraic Mahler functions are known to be rational, the field $\overline{\Bbb Q}(z)$ is algebraically closed in the fraction field of $S$. In other words, the morphism $\Spec(S)\to\Spec(R)$ has geometrically integral generic fibre. On the other hand, by Remark \ref{pvremas} (1) the $R$-algebra $S$ becomes isomorphic to $\omega(\mathcal{S})\otimes_kR$ after faithfully flat base change. This implies that all fibres of $\Spec(S)\to\Spec(R)$ are integral, in particular the one over $\alpha$. But they are of the same dimension by the analogue of the Siegel--Shidlovsky theorem proven by Nishioka \cite{nishioka}. This proves the corollary.

\bibliographystyle{abbrv}
\bibliography{nagypaper}

\end{document}